\newlength\myindent
\newcommand{\Rmnum}[1]{\expandafter\@slowromancap\romannumeral #1@}
\newtheorem{definition}{Definition}[section]
\newtheorem{theorem}{Theorem}[section]
\newtheorem{lemma}{Lemma}[section]
\newtheorem{proposition}{Proposition}[section]
\newtheorem{assumption}{Assumption}[section]
\numberwithin{equation}{section}
\DeclareMathOperator{\T}{\mathrm{T}}
\DeclareMathOperator{\Hess}{\mathrm{Hess}}
\DeclareMathOperator{\grad}{\mathrm{grad}}
\def\M{\mathcal{M}}
\providecommand{\keywords}[1]
{
\small 
%\textbf{\textit{Keywors---}} #1
\textbf{Keywords:}
}
\begin{document}

\title{An increasing rank Riemannian method for generalized Lyapunov equations 
}
\author[1]{Zhenwei Huang}
\author[2]{Wen Huang}

\affil[1]{ School of Mathematical Sciences, Xiamen University, Xiamen, China.\vspace{.15cm}}
\affil[2]{ Corresponding author, School of Mathematical Sciences, Xiamen University, Xiamen, China. }

\maketitle

\begin{abstract}
In this paper, we consider finding a low-rank approximation to the solution of a large-scale generalized Lyapunov matrix equation in the form of $A X M + M X A = C$, where $A$ and $M$ are symmetric positive definite matrices. An algorithm called an Increasing Rank Riemannian Method for Generalized Lyapunov Equation (IRRLyap) is proposed by merging the increasing rank technique and Riemannian optimization techniques on the quotient manifold $\mathbb{R}_*^{n \times p} / \mathcal{O}_p$. To efficiently solve the optimization problem on $\mathbb{R}_*^{n \times p} / \mathcal{O}_p$, a line-search-based Riemannian inexact Newton's method is developed with its global convergence and local superlinear convergence rate guaranteed. 
Moreover, we derive a preconditioner which takes $M \neq I$ into consideration. 
%\sout{Numerical experiments show that IRRLyap with one of the Riemannian metrics is most efficient and robust in general and is preferable compared to the tested state-of-the-art methods when the lowest rank solution is desired.}
Numerical experiments show that the proposed Riemannian inexact Newton's method and preconditioner has superior performance and IRRLyap is preferable compared to the tested state-of-the-art methods when the lowest rank solution is desired.

\begin{keywords}

Generalized Lyapunov equations, Riemannian optimization, Low-rank approximation, Riemannian truncated Newton's method, Increasing rank method
\end{keywords}

\end{abstract}

\section{Introduction} \label{Intro}

This paper considers the large-scale generalized Lyapunov matrix equation 
\begin{equation}
  AXM + MXA - C = 0, \label{Intro-LyapMatEqu}
\end{equation} 
where $A, M \in \mathbb{R}^{n\times n}$ are large-scale, sparse, and symmetric positive definite matrices and $C \in \mathbb{R}^{n\times n}$ is symmetric positive semidefinite. It has been shown in~\cite{Chu87,penzl_numerical_1998} that Equation~\eqref{Intro-LyapMatEqu} has a unique solution $X^*$ and $X^*$ is symmetric positive semidefinite. 
 Moreover, it is known that under certain circumstances such as $C$ is low rank, the solution $X^{*}$ of the Lyapunov equation \eqref{Intro-LyapMatEqu} can be well approximated by a low-rank matrix, since the eigenvalues of the solution $X^*$ numerically has an exponential decay~\cite{PENZL00,Grasedyck2004ExistenceOA}. 

In recent years, solving Equation (\ref{Intro-LyapMatEqu}) has attracted much attention since it plays a significant role in model reduction~\cite{Moore1981PrincipalCA,Schilders2014ModelOR,Hamadi2021AMR,ISC22}, signal processing~\cite{Goyal2020ImageDR}, systems and control theory~\cite{LMESSC96,Antoulas2005ApproximationOL}, and solutions of PDEs~\cite{DV16}. 
Equation~\eqref{Intro-LyapMatEqu} with a large-scale sparse system matrix $A$ and a large-scale sparse mass matrix $M$ can be yielded by the finite element method semidiscretized in space~\cite{Bart10}.

%For example, consider the following generalized linear invariant-time system 
%\[	
%	\Sigma(E,A,C):\begin{cases}
%		E\dot{x}(t) = Ax(t), \\ 
%		\;\;\;y(t) = Cx(t),
%	\end{cases}
%\]
%	where $x(t)$ and $y(t)$ are the state vector and output vector respectively. \sout{Then} \zwhcomm{The solution of }{} the following generalized Lyapunov matrix equation 
%	\[
%		A^TXE + E^TXA + E^TC^TCE=0
%	\] 
%	captures the stability of $(E,A)$ \cite{Ishihara02}.
	
\subsection{Contributions}

In this paper, Problem~\eqref{Intro-LyapMatEqu} is formulated as an optimization problem on a quotient manifold $\mathbb{R}_*^{n \times p} / \mathcal{O}_p$ (see definition in Section~\ref{PrelNotat}). A line-search-based Riemannian truncated Newton's method is developed by using the truncated Newton in~\cite{dembo_truncated-newton_1983} and the line search conditions in~\cite{Byrd1989ATF}. The global convergence and local superlinear convergence rates are established. As far as we know, this is the first line-search-based generic Riemannian inexact Newton's method for optimization and guarantees global and local superlinear convergence. In addition, the line search conditions that we used allow conditions other than the Wolfe conditions in~\cite{dembo_truncated-newton_1983}.
 We equip the quotient manifold $\mathbb{R}_*^{n \times p} / \mathcal{O}_p$ with a Riemannian metric in~\cite{Zheng2022RiemannianOU} and explore the performance of the Riemannian truncated Newton's method for solving~\eqref{Intro-LyapMatEqu}. 
 Moreover, we propose an new preconditioner which is more effective compared to the one in~\cite{Bart10} in the sense that the new one takes $M \neq I$ into consideration whereas the one in~\cite{Bart10} does not. We finally combine the Riemannian truncated Newton's method with the increasing rank algorithm in~\cite{Bart10} and give Algorithm~\ref{FinalAlgPrecond-RLyap-RNewton} for solving~\eqref{Intro-LyapMatEqu}.
Numerically, the proposed Riemannian truncated Newton's method is able to find higher accurate solutions compared to the Riemannian trust-region Newton's method in~\cite{Absil2007TrustRegionMO}, the Riemannian conjugate gradient method~\cite{Boumal2015LowrankMC}, and the Riemannian memory-Limited BFGS method~\cite{HAGH18}; the proposed preconditioner can further reduce the number of actions of the Hessian evaluations when $M \neq I$ compared to the pre-conditioner in~\cite{Bart10}; and the increasing rank method with the Riemannian truncated Newton's method is able to find lower rank solutions compared to the three state-of-the-art low-rank methods K-PIK~\cite{simoncini_new_2007}, RKSM~\cite{DS11,KS20}, and mess\_lradi~\cite{SaaKB21-mmess-2.2} when the residual $\|AXM+MXA-C\|_F$ is roughly the same.

\subsection{Related Work}

%\sout{The classical numerical methods for the standard Lyapunov matrix equation, i.e., setting $M=I$, are the Bartels-Stewart method \cite{BS72}, the Hammarling method \cite{Ham82} and the Hessenberg–Schur method \cite{GN79}, which are based on the Schur decomposition of the system matrix A. }
Classical numerical methods for the standard Lyapunov matrix equations, i.e., setting $M=I$, include the Bartels-Stewart method \cite{BS72}, the Hessenberg–Schur method \cite{GN79}, %which is based on the Schur decomposition of the system matrix $A$, 
and the Hammarling method \cite{Ham82}. These methods are direct methods and the computations involve $O(n^3)$ number of floating point operations, which prevents the use for large-scale problems, where the floating point operation is defined in~\cite{Golub1996MatrixC}. One approach to develop an efficient method for large-scale problems is to explore the low-rank structure of the solution $X^*$.
%Hence, it is important to design a solver for computing large-scale problems.

%compute the matrix $X$ of size $n\times n$ in $O(n^3)$ floating point operations \zwhcomm{according to \cite{Golub1996MatrixC}}{}. This gives rise to that they only are fit for small-scale problems, e.g, $n\le 10^4$. It, hence, is especially important to design a solver for computing large-scale problems.

% \whcomm{ZHTODO}{}

\paragraph{Low-rank methods for Lyapunov equations.} In practice, there is a class of applications of generalized Lyapunov equation (\ref{Intro-LyapMatEqu}), whose solution %\sout{$X^*$ has rank $p\ll n$} 
 can be approximated with a low-rank matrix of rank $p\ll n$. In this case, the number of unknowns is reduced to $np$, which is greatly less than $n^2$. If we can compute a low-rank approximation in $O(np^c)$ operations with constant $c$, for large problems, then the complexity is significantly reduced.
 % the benefits of doing so are very obvious. 
 Based on this idea, very diverse low-rank methods have been proposed. 
Majority of these methods are based on Smith method~\cite{Gugercin2003}, low-rank alternating direction implicit iterative (LR-ADI)~\cite{low-rank_2004,Benner2009OnTA,Benner2014ComputingRL,PDJ22}, sign function methods~\cite{Baur2006FactorizedSO,Baur2008LowRS}, Krylov subspace techniques~\cite{simoncini_new_2007,DS11,Hamadi2021AMR}, and Riemannain optimization~\cite{Bart10}. It is worth noting that there exist several low-rank methods for algebraic Riccati equations~\cite{Mishra2013ARA,Benner2021ALS}, which, in some cases, are also suited for Lyapunov equations.

	% \sout{A kind of methods are focused on Smith method whose typical representative is the modified LR-Smith(l) method~\cite{Gugercin2003}. The work in~\cite{low-rank_2004} presented alternating direction implicit (ADI) based low-rank solvers which is the Cholesky factor–alternating direction implicit (CF–ADI) algorithm. For the low-rank ADI method, one of the most computationally expensive steps is solving the shifted systems and thus Benner et al.~\cite{PDJ22} proposed a new scheme that use the extended Krylov subspace method for solving shifted systems. Simoncini~\cite{simoncini_new_2007} concentrated on methods based on Krylov subspaces and proposed the Krylov-plus-inverted-Krylov (KPIK) method. The work in~\cite{LW07} is based on multigrid algorithms.}

		The low-rank methods listed above in addition to~\cite{Bart10,Mishra2013ARA} all start from well-known iterative methods and skillfully rewrite the problem so that the problem can be transformed into a low-rank setting. Although the calculations of these methods are cheap at each step since they work on a factor $Y$ of iterate $X=YY^T$, the convergence rate of these methods is usually unknown or at most linear and no good acceleration strategy has been given. As a consequence, Bart et al. \cite{Bart10} reformulated Equation (\ref{Intro-LyapMatEqu}) into an optimization problem defined on the set of fixed rank symmetric positive semidefinite matrices. %, a manifold, by minimizing the energy norm of the error. 
  In this paper, we follow this idea but from the perspective of quotient manifold and %\sout{use a new} \zwhcomm{
  propose a
  %}{} 
  Riemannian Newton's method to solve %\sout{our} \zwhcomm{
  the
  %}{} 
  problem. Moreover, it is empirically shown in Section~\ref{sec:ComAlg1} that the quotient manifold geometry is superior to the embedded submanifold geometry and the intuition is also discussed therein.

\paragraph{Riemannian Newton's methods.} Newton's method is a powerful tool for finding the minimizer of nonlinear functions in Euclidean spaces. Although Newton’s method has a fast local convergence rate, it is highly sensitive to the initial iterate (i.e., it is not globally convergent); it is not defined at points where the Hessian is singular; and for non-convex problems, it does not necessarily generate a sequence of descent directions. In order to overcome these issues, Dembo et al. \cite{dembo_truncated-newton_1983} proposed a truncated-Newton's method. The truncated-Newton's method contains two parts. The outer iteration executes Newton's method based on line search, and the inner iteration solves the Newton equation inexactly by a truncated conjugate gradient method. % solved once in each major iteration. 
% \sout{so} \zwhcomm{
% To this end, %}{} 
% conjugate gradient method is chosen to solve Newton's equation in the minor iteration. 
Absil et al. \cite{Absil2007TrustRegionMO} proposed a Riemannian trust-region Newton's method similar to this idea which is based on the trust-region method. The proposed method is a generalization of the one in \cite{dembo_truncated-newton_1983} to manifolds and further relaxes the line search condition used in~\cite{dembo_truncated-newton_1983}.  %
It is worth mentioning that there are some Riemannian Newton's methods based on line search; see, e.g.,~\cite{Bortoloti2018DampedNM,Zhao2018ARI, Wang2020RiemannianNM, Bortoloti2021AnED}. 
% \sout{They are different from our proposed method, such as the choice of search direction and line search conditions} 
As far as we know, all the existing Riemannian Newton's methods aim to find a root of a vector field, whereas the proposed Riemannian truncated-Newton's method is used to optimize a sufficiently smooth function.

% \zwhcomm{In essence, they are different from our proposed method which is to find a minimizer of a sufficiently smooth real-valued function, since they are designed to find a singularity of a differentiable vector field. Besides, in~\cite{Bortoloti2018DampedNM, Bortoloti2021AnED}, a descent direction is set as the negative gradient direction of a metric function if no Newton's direction exists.}{}

\subsection{Outline}

This paper is stated as follows. In Section \ref{PrelNotat}, we introduce preliminaries on algebra and manifold. In Section \ref{ProbStat}, we reformulate Equation (\ref{Intro-LyapMatEqu}) as an optimization problem on quotient manifold $\mathbb{R}_*^{n\times p}/\mathcal{O}_p$. The classical truncated Newton's method is generalized to Riemannian setting in Section \ref{RieNew} and it is proven that the proposed Riemannian truncated Newton's method converges to a stationary point from any starting point and has a local superlinear convergence rate. In Section~\ref{QuotMani}, we give ingredients for optimization on Riemannian quotient manifolds with three metrics. In Section \ref{FinalAlgPrecond}, the proposed preconditioner  is discussed, and the increasing rank algorithm for solving Equation (\ref{Intro-LyapMatEqu}) is given. Subsequently, in Section \ref{NumExp}, we demonstrate the performance of the proposed Riemannian truncated-Newton's method preconditioner, and compare the proposed method with other existing low-rank methods for Lyapunov equations. Finally, concluding remarks are given in Section~\ref{Concl}.

\section{Preliminaries and Notation} \label{PrelNotat}

Throughout this paper, we denote the real matrices space of size $m$-by-$n$ by $\mathbb{R}^{m\times n}$. For $A\in\mathbb{R}^{m\times n}$, the matrix $A^T\in\mathbb{R}^{n\times m}$ denotes the transpose of $A$. Define $\mathrm{vec}(\cdot): \mathbb{R}^{m\times n} \rightarrow \mathbb{R}^{mn}$ as an isomorphism operator which transforms a matrix into a vector by column-wise stacking, that is, for any $A=[a_1,a_2,\cdots,a_n]\in\mathbb{R}^{m\times n}$. We have $\mathrm{vec}(A)=[a_1^T,a_2^T,\dots,a_n^T]\in\mathbb{R}^{mn}$.
Let $\otimes$ denote the Kronecker product, i.e., for $A=[a_{ij}]\in \mathbb{R}^{m\times n}$ and $B=[b_{ij}]\in\mathbb{R}^{p\times q}$, it holds that $A\otimes B=[a_{ij}B]\in\mathbb{R}^{mp\times nq}$. 
%Then with the identity $\mathrm{vec}(ABC)=(C^T\otimes A)\mathrm{vec}(B)$ for $A, B,C\in \mathbb{R}^{n\times n}$, we have that Equation (\ref{Intro-LyapMatEqu}) is equivalent to 
%
%\begin{equation}
%  \label{Pre_Not-Euiv_LinearEqu}
%  L\mathrm{vec}(X)=\mathrm{vec}(C),
%\end{equation}
%where $L=A\otimes M+M\otimes A$. By our assumptions, i.e., $A$ and $M$ are symmetric positive definite, the matrix $L$ is symmetric positive definite \cite[Proposition 3.1]{Bart10}. Because $L$ is positive definite, it is not difficult to verify that the function $\|\cdot\|_L=\sqrt{\left<\cdot,\cdot\right>_L}$ with $\left<x,x\right>_L=\left<x,Lx\right>=x^TLx$ defines a norm over $\mathbb{R}^n$ called $L$-norm. 

\paragraph{Riemannian geometry.} The Riemannian geometry and optimization used in this paper can be found in the standard literatures, e.g.,~\cite{Boothby1975AnIT, AbsMahSep2008}, and the notation below follows~\cite{AbsMahSep2008}. Denote a manifold by $\mathcal{M}$. For any $x\in \mathcal{M}$, $\mathrm{T}_x\mathcal{M}$ is the tangent space of $\mathcal{M}$ at $x$ and elements in $\mathrm{T}_x\mathcal{M}$ are call tangent vectors of $\mathcal{M}$. Tangent bundle, $\mathrm{T}\mathcal{M} $, of manifold $\mathcal{M}$ is the union set of all tangent spaces. Mapping $\eta:\mathcal{M}\rightarrow \mathrm{T}\mathcal{M}$ maps a point $x\in\mathcal{M}$ into a tangent vector $\eta_x\in\mathrm{T}_x\mathcal{M}$ and is called a vector field. 
A metric on a manifold $\mathcal{M}$ is defined as $g(\cdot,\cdot): \mathrm{T}\mathcal{M} \oplus \mathrm{T}\mathcal{M} \rightarrow \mathbb{R}$, where $\mathrm{T}\mathcal{M} \oplus \mathrm{T}\mathcal{M}=\{(\eta_x,\xi_x):x\in \mathcal{M},\eta_x\in\mathcal{T}_x\mathcal{M},\xi_x\in\mathrm{T}_x\mathcal{M}\}$ is the Whitney sum of tangent bundle. Given $\eta_x \in \T_x \mathcal{M}$, its induced norm is defined by $\|\eta_x\| = \sqrt{g(\eta_x, \eta_x)}$.
If $g$ is smooth in the sense that for any two smooth vector fields $\xi$ and $\eta$, function $g(\xi,\eta):\mathcal{M}\rightarrow \mathbb{R}$ is smooth, then $\mathcal{M}$ equipped with $g$ is a Riemannian manifold and $g$ is called a Riemannian metric. 
If $\mathcal{M}$ is a Riemannian manifold, then the tangent space $\mathrm{T}_x\mathcal{M}$ is a Euclidean space with the Euclidean metric $g_x(\cdot,\cdot)=g(\cdot,\cdot)|_{\mathrm{T}_x\mathcal{M}}$. For a smooth function $f:\mathcal{M}\rightarrow \mathbb{R}$, notations $\mathrm{grad}f(x)$ and $\mathrm{Hess}f(x)$ denote the Riemannian gradient and Hessian of $f$ at $x$ respectively, and the action of $\mathrm{Hess}f(x)$ on a tangent vector $\eta_x\in\mathrm{T}_x\mathcal{M}$ is denoted by $\mathrm{Hess}f(x)[\eta_x]$. 
Let $\mathcal{H}_x$ be a linear operator on $\T_x\M$. Its adjoint operator, denoted by $\mathcal{H}_x^*$, satisfies $g_x(\eta_x,\mathcal{H}_x\xi_x)=g_x(\mathcal{H}^*_x\eta_x,\xi_x)$ for all $\eta_x,\xi_x\in \T_x\M$. With $\mathcal{H}_x=\mathcal{H}_x^*$, we call $\mathcal{H}_x$ self-adjoint with respect to $g$.

Let $\gamma:\mathcal{I} \rightarrow \mathcal{M}$ denote a smooth curve and $\dot{\gamma}(t)\in\mathrm{T}_{\gamma(t)}\mathcal{M}$ denotes its velocity at $t$, where $\mathcal{I} \subset \mathbb{R}$ is open and $[0, 1] \subset \mathcal{I}$. The distance between $x=\gamma(0)$ and $y=\gamma(1)$ is defined by $\mathrm{dist}(x,y)= \inf_{\{\gamma:\gamma(0)=x,\gamma(1)=y\}}\int_0^1\|\dot{\gamma}(t)\|\mathrm{d}t$. A smooth curve $\gamma:[0,1]\rightarrow \mathcal{M}$ is called the geodesic if it locally minimizes the distance between $\gamma(0)=x$ and $\gamma(1)=y$. %\zwhcomm{
Notation $B_{\mu}(0_x)=\{\xi_x\in \mathrm{T}_x\mathcal{M}: \|\eta_x\| \le \mu\}$ is used to denote the open ball in $\mathrm{T}_x \mathcal{M}$ of radius $\mu$ centered at $0_x$ and $B_{\mu}(x)=\{y\in\mathcal{M}:\mathrm{dist}(x,y)\le \mu\}$ is an open ball in $\mathcal{M}$ of radius $\mu$ centered at $x$. A retraction $R:\T\M \rightarrow \M$ is a map satisfying (i) $R(0_x)=x$ for all $x\in \M$ and (ii) for each fixed $x\in \M$, $\frac{\mathrm{d}}{\mathrm{d} t}R(t\eta_x)\big|_{t=0}=\eta_x$ for all $\eta_x\in\T_x\M$. 
%}{}
%For any $\eta_x\in\mathrm{T}_x\mathcal{M}$, the exponential map is defined as $\mathrm{Exp}_{x}(\eta_x)=\gamma(1)$, where $\gamma(0)=x$ and $\dot{\gamma}(0)=\eta_x$ is the geodesic. \zwhcomm{The injectivity radius of $\mathcal{M}$ at $x$, denoted by $\mathrm{inj}(x)$ is the supremum over radii $r>0$ such that $\mathrm{Exp}_x$ is defined and is diffeomorphism on the open ball $B_r(x)=\{u\in\mathrm{T}_x\mathcal{M}:\|u\|_x\le r\}$.}{} Parallel transport on $\mathcal{M}$ defined as $\mathrm{P}_{0\rightarrow1}^{\gamma}:\mathrm{T}_{\gamma(0)}\mathcal{M}\rightarrow\mathrm{T}_{\gamma(1)}\mathcal{M}$ is a linear isometry along the geodesic $\gamma$.   
%\zwhcomm{For details, please refer to \cite{AbsMahSep2008} or \cite{Boumal:300281}.}{}

\paragraph{The considered manifold.} The set of symmetric positive semidefinite matrices of size $n$ with fixed rank $p$, denoted by $\mathcal{S}_+(p,n)$, is a Riemannian manifold with dimension $np-\frac{1}{2}p(p-1)$ (see, e.g., \cite{Helmke1995CriticalPO}). For any $X\in\mathcal{S}_+(p,n)$, there exists a matrix $Y\in \mathbb{R}^{n\times p}_*$ such that $X=YY^T$, where $\mathbb{R}^{n\times p}_* = \{Y \in \mathbb{R}^{n \times p} : \mathrm{rank}(Y) = p\}$ is called a noncompact Stiefel manifold \cite[Chapter 3]{AbsMahSep2008}. The orthogonal group $\mathcal{O}_p = \{O \in \mathbb{R}^{p \times p}: O^T O = I_p\}$ is a Lie group with the group operator given by the matrix product; it can be equipped with the smooth structure as an embedded submanifold of $\mathbb{R}^{p \times p}$; and its identity is the identity matrix $I_p$. Define a right group action of $\mathcal{O}_p$ on $\mathbb{R}^{n\times p}_*$ as: 

\[
  \theta :\mathbb{R}^{n\times p}_* \times \mathcal{O}_p \rightarrow \mathbb{R}^{n\times p}_*:\theta(Y,Q)=YQ.
  \]
Obviously, this group action satisfies the identity and compatibility conditions~\cite[Definition 9.11]{Boumal:300281}, and therefore it induces an equivalent relation $\sim$ on $\mathbb{R}^{n\times p}_*$: 

\[
  Z \sim Y \Leftrightarrow Y=\theta(Z,Q) = ZQ \text{ for some } Q\in \mathcal{O}_p.
  \]
The orbit of $Y$ forms an equivalence class, namely, $[Y]=\{YQ:Q\in \mathcal{O}_p\}$. We denote the quotient space $\mathbb{R}^{n\times p}_*/\sim$ as $\mathbb{R}_*^{n\times p}/\mathcal{O}_p$. The natural projection between $\mathbb{R}^{n\times p}_*$ and $\mathbb{R}_*^{n\times p}/\mathcal{O}_p$ is given by 

\[
  \pi: \mathbb{R}^{n\times p}_* \rightarrow \mathbb{R}^{n\times p}_* / \mathcal{O}_p : Y \mapsto \pi(Y)=[Y].
  \]
From now on, out of clarity, we denote $[Y]$ by $\pi(Y)$ as an element of the quotient manifold $\mathbb{R}_*^{n\times p}/\mathcal{O}_p$ and use $\pi^{-1}(\pi(Y))$ to denote $[Y]$ when it is regarded as a subset of $\mathbb{R}_*^{n\times p}$. 

Since the Lie group $\mathcal{O}_p$ acts smoothly, freely and properly with the group action $\theta$ on the smooth manifold $\mathbb{R}^{n\times p}_*$, by~\cite[Theorem 9.17]{Boumal:300281}, the quotient space $\mathbb{R}^{n\times p}_*/ \mathcal{O}_p$ is a manifold called a quotient manifold of $\mathbb{R}^{n\times p}_*$. The manifold $\mathbb{R}_*^{n \times p}$ is therefore called the total space of $\mathbb{R}^{n\times p}_*/ \mathcal{O}_p$. Since $\mathbb{R}^{n\times p}_*/\mathcal{O}_p$ is diffeomorphic to $\mathcal{S}_+(p,n)$, the quotient manifold $\mathbb{R}^{n\times p}_*/\mathcal{O}_p$ can be viewed as a representation of the manifold $\mathcal{S}_+(p,n)$. 

%Indeed, first of all, $\beta$ must be surjection; besides let $\xcancel{[Y],[Z]\in \mathbb{R}^{n\times p}_*} \zwhcomm{\pi(Y),\pi(Z)\in\mathbb{R}^{n\times p}_*/\mathcal{O}_p}{}$ such that \sout{$\beta([Y])\not=\beta([Z])$} \zwhcomm{$\beta(\pi(Y))\not=\beta(\pi(Z))$}{}, i.e., $YY^T\not=ZZ^T$, which implies that there exists no $Q\in \mathcal{O}_p$ such that $Y=ZQ$, and thus \sout{$[Y]\not=[Z]$} \zwhcomm{$\pi(Y)\not=\pi(Z)$}{}.

\paragraph{Function classes.} 
% {\color{red}
For the convergence analysis, we need notions of a pullback of a real-valued function $f$ with respect to a retraction $R$ and the radially Lipschitz continuous differentiability of a pullback. 
\begin{definition} \label{PrelNotat-Pullback}
	Let $f: \mathcal{M} \rightarrow \mathbb{R}^n$ be a real-valued function on manifold $\mathcal{M}$ with a retraction $R$. We call $\hat{f}:\mathrm{T}\mathcal{M} \rightarrow \mathbb{R}: \eta_x \mapsto \hat{f}(\eta_x)=f(R(\eta_x))$ the pullback of $f$ with respect to $R$. When restricted on $\mathrm{T}_x\mathcal{M}$, we denote $\hat{f}\big|_{\mathrm{T}_x\mathcal{M}}$ by $\hat{f}_x$.
\end{definition}
% }
\begin{definition} (\cite[Definition 7.4.1]{AbsMahSep2008}) 
\label{PrelNotat-RLipConti}
  Let $\hat{f}$ be a pullback of $f$ with respect to a retraction $R$. $\hat{f}$ is referred to as a radially $L$-$C^1$ function for all $x\in \mathcal{M}$ if there exists a positive constant $L$ such that for all $x\in \mathcal{M}$ and all $\eta_x\in \mathrm{T}_x\mathcal{M}$, it holds that 
  \begin{equation}
    \begin{aligned}
      \bigg| \frac{\mathrm{d}}{\mathrm{d}\tau}\hat{f}(t\eta_x)\big|_{t=\tau} - \frac{\mathrm{d}}{\mathrm{d}t}\hat{f}(t\eta_x)\big|_{t=0} \bigg|\le L \tau\|\eta\|^2,
    \end{aligned}
  \end{equation}
  where $\tau$ and $\eta_x$ satisfy that $\mathrm{R}_x(\tau\eta_x)\in \mathcal{M}$.
\end{definition}

%\begin{definition}(\cite[Definition 10.4.2]{Boumal:300281})
%	\label{PrelNotat-VecFieldLipConti}
%	A vector field $\eta$ on a connected manifold $\mathcal{M}$ is $L$-Lipschitz continuous if, for all $x,y\in\mathcal{M}$ with $\mathrm{dist}(x,y)<\mathrm{inj}(x)$, it holds that 
%	\[
%		\|\mathrm{P}_{0\leftarrow 1}^{\gamma}\eta_y-\eta_x\|\le L\mathrm{dist}(x,y),
%	\]
%	where $\gamma:[0,1]\rightarrow \mathcal{M}$ is the unique minimizing geodesic connecting $x$ to $y$, and $\mathrm{inj}(x)$ is the injectivity radius of $\mathcal{M}$ at $x$.
%\end{definition}

%For $x\in\mathcal{M}$, we also need the notion of Riemannian Lipshitz continuity for a linear operator $H(x):\mathrm{T}_x\mathcal{M} \rightarrow \mathrm{T}_x\mathcal{M}$. 
%\begin{definition}
%	Suppose that $\mathcal{M}$ is connected. We say $H$ is L-Lipschitz continuous if for all $x,y\in\mathcal{M}$ such that $\mathrm{dist}(x,y)<\mathrm{inj}(x)$ we have 
%	\[
%	\|\mathrm{P}_{0\leftarrow 1}^{\gamma}\circ H(y) \circ \mathrm{P}_{1\leftarrow0}^{\gamma}-H(x)\| \le L\mathrm{dist}(x,y),
%	\]
%	where $\|\cdot\|$ denotes the operator norm with respect to the Riemannian metric and $\gamma:[0,1]\rightarrow \mathcal{M}$ is the unique minimizing geodesic connecting $x$ to $y$.
%\end{definition}

\paragraph{Notations.} %At the end of this section, we list some notations that will be used later. 
For any $Y\in \mathbb{R}_*^{n\times p}$ with $p<n$, the matrix $Y_\perp$ of size $n\times(n-p)$ denotes the normalized orthogonal completment of $Y$, i.e., $Y_\perp^TY_\perp=I_{n-p}$ and $Y_\perp^TY=0$. Furthermore, $Y_{\perp_M}$ is the normalized orthogonal complement of $MY$. The sysmmetric matrices space of size $n\times n$ is denoted by $\mathcal{S}_n^{\mathrm{sym}}$. For any matrix $X\in\mathcal{S}_n^{\mathrm{smy}}$, the symbols $X \succ0$ and $X\succeq0$ mean that $X$ is positive definite and positive semidefinite respectively; notation $\lambda(X)$ denotes any eigenvalue of $X$, whereas $\lambda_{\min}(X)$ and $\lambda_{\max}(X)$ denote the minimum eigenvalue and the maximum eigenvalue of $X$. For a square matrix $X$, $\mathrm{tr}(X)$ denotes the sum of all diagonal elements. For $Y\in\mathbb{R}^{n\times p}$, let $\mathrm{sym}(Y)=(Y+Y^T)/2$ and $\mathrm{skew}(Y)=(Y-Y^T)/2$.

\section{Problem Statement} \label{ProbStat}

% \zwhcomm{
% Apply $\mathrm{vec(\cdot)}$ to the left and right sides of Eqiation~\eqref{Intro-LyapMatEqu} to get $L\mathrm{vec}(X)=\mathrm{vec}(C)$, where $L=A\otimes M + M\otimes A$ is symmetric positive definite by our assumptions~\cite[Proposition 3.1]{Bart10}. Thus, the function $\|\cdot\|_L: v \mapsto \sqrt{v^TLv}$ is a norm called $L$-norm. Let $X$ and $X^*$ denote the approximation of rank $p$ and the exact sulotuon of Equation (\ref{Intro-LyapMatEqu}) respectively. From the perspective of optimization, we can solve the Lyapunov equation (\ref{Intro-LyapMatEqu}) by minimizing the $L$-norm of the error $E=X-X^*$. This idea has been used in \cite{Bart10}, in which the following problem firstly is obtained 

We consider a fixed-rank optimization formulation of~\eqref{Intro-LyapMatEqu} in~\cite{Bart10}
% \begin{align*}
% 	&\min_{X\in \mathbb{R}^{n\times n}}  h(X):=\mathrm{trace}(XAXM) - \mathrm{trace}(XC), \\ 
% 	&\;\; \text{ s.t. } X\in \tilde{\mathcal{S}}_+(p,n)=\{X\in\mathbb{R}^{n\times n}:X\in\mathcal{S}_n^{\mathrm{sym}},\mathrm{rank}(X)\le p\}.
% \end{align*}
% Noting that $\tilde{\mathcal{S}}_+(p,n)$ is not a manifold, if the solution of Equation (\ref{Intro-LyapMatEqu}), $X^*$, has rank not less than $p$, by our assumption, then the minimizer of $h(X)$ over $\widetilde{\mathcal{S}}(p,n)$ has rank $p$ \cite[Proposition 3.2]{Bart10}. The optimization problem, hence, is relaxed as:
\begin{equation} 
  \begin{aligned} \label{Pro_stat-OptProb_Bart}
   & \min_{X\in \mathbb{R}^{n\times n}} h(X) :=\mathrm{tr}(XAXM) - \mathrm{tr}(XC), \\ 
   &\quad \text{ s.t. } X\in \mathcal{S}_+(p,n).
  \end{aligned}
\end{equation}
Note that the Euclidean gradient of~\eqref{Pro_stat-OptProb_Bart} is $\nabla h(X)=AXM + MXA - C.$ It follows that any stationary point of $h(X)$ is a solution of~\eqref{Intro-LyapMatEqu}. Since $\mathbb{R}_*^{n \times p} / \mathcal{O}_p$ is diffeomorphic to $\mathcal{S}_+(p, n)$, Problem~\eqref{Pro_stat-OptProb_Bart} can be equivalently reformulated as\footnote{The term ``equivalent'' means if $X\in \mathcal{S}_+(p,n)$ is a stationary point of $h$ then $\pi(Y)\in \mathbb{R}^{n\times p}/\mathcal{O}_p$ satisfying $X=YY^T$ is also a stationary point of $f$; conversely, if $\pi(Y)$ is a stationary point of $f$, then $X=YY^T$ is a stationary point of $h$.}: 
\begin{equation}
  \begin{aligned} \label{Pro_stat-FinalProb} %\label{Pro_stat-FuncOnQuotMani}
    \min_{\pi(Y)} f: \mathbb{R}_*^{n\times p}/\mathcal{O}_p \rightarrow \mathbb{R} :  \pi(Y) \mapsto h(YY^T)=\mathrm{tr}(Y^TAYY^TMY) - \mathrm{tr}(Y^TCY),
  \end{aligned}
\end{equation}
which is defined on the quotient manifold $\mathbb{R}_*^{n\times p}/\mathcal{O}_p$. Therefore, Riemannian optimization algorithms can be used. 
It is shown later in Section~\ref{NumExp} that the reformulated Problem~\eqref{Pro_stat-FinalProb} in the quotient manifold $\mathbb{R}_*^{n\times p}/\mathcal{O}_p$ has advantages over the original Problem~\eqref{Pro_stat-OptProb_Bart} from the viewpoint of Riemannian optimization.
Note that Problem~\eqref{Pro_stat-FinalProb} is different from the Burer and Monteiro approach~\cite{Burer2003} in the sense that \cite{Burer2003} optimizes over the factor $Y$ but Problem~\eqref{Pro_stat-FinalProb} is over the quotient manifold~$\mathbb{R}_*^{n\times p}/\mathcal{O}_p$.

Problem~\eqref{Pro_stat-FinalProb} requires a good estimation of the rank $p$, which is usually unknown in practice. It has been shown in~\cite{Bart10} that if the rank of the solution of~\eqref{Intro-LyapMatEqu} is larger than $p$, then any local minimizer of the fixed-rank formulation~\eqref{Pro_stat-FinalProb} must have full rank $p$. Therefore, one can estimate the rank $p$ by using a rank-increasing algorithm. The proposed Riemannian optimization over fixed rank manifold is discussed in Section~\ref{RieNew} and the rank-increasing algorithm is described in Section~\ref{FinalAlgPrecond}.

\section{A Riemannian Truncated Newton's Method} \label{RieNew}
% 这个章节将算法描述清楚，给出伪代码。同时给出算法的收敛性分析，复杂度分析以及实现方面的细节。这部分根据情况会需要分解成一些子章节。比如如果是算法设计的论文，伪代码和代码解释一个子章节，收敛性分析一个子章节（如果收敛性分析很长，可以单独成为一个章节），复杂度分析和实现一个子章节。

Riemannian optimization has attracted more and more researchers' attention for recent years and many related algorithms have been investigated, such as the Riemannian trust-region methods \cite{Absil2007TrustRegionMO}, the Riemannian steepest descent method \cite{AbsMahSep2008}, the Riemannian Barzilai Borwein method \cite{Iannazzo2018TheRB}, the Riemannian nonlinear conjugate gradient methods \cite{ring_optimization_2012, Sato2014ADR, Sato2013ANG, Zhu2017ARC}, the Riemannian versions of quasi-Newton's methods \cite{Huang2015ABC,Huang2015ARS,huang_riemannian_2018,Huang2022ALR}. %, and so forth. %\zwhcomm{}{} 
This paper considers the Riemannian optimization problems in the form of
\begin{equation} \label{RieNew-Prob}
  \begin{aligned} 
    \min_{x} f(x) \text{ s.t. }x\in \mathcal{M},
  \end{aligned}
\end{equation}
where $\mathcal{M}$ is a finite dimensional Riemannian manifold, and $f:\mathcal{M} \rightarrow \mathbb{R}$ is a real-valued function. It is further assumed throughout this paper that the below assumptions hold.

\begin{assumption} \label{TwiceContinuDiff}
  $f$ is twice continuously differentiable.
\end{assumption} 

\begin{assumption} \label{BoundedLevel}
  For all starting $x_0\in \mathcal{M}$ the level set 
  \[
    L(x_0) := \{x\in\mathcal{M} : f(x) \le f(x_0)\} 
    \]
    are bounded.
\end{assumption}

\begin{assumption} \label{RadLipCont}
	$f$ is radially Lipschitz continuous 	(Definition~\ref{PrelNotat-RLipConti}). 
\end{assumption}

%\whcomm{[ZWTODO: We need the function $f$ satisfies Definition~\ref{PrelNotat-RLipConti}. In the later proofs or descriptions, we need to point out where we need this assumption. (i) the existence of step size of the Wolfe conditions and Armijo-Goldstein conditions (ii) the Wolfe conditions and Armijo-Goldstein conditions implies BN condition.]}{}

\subsection{Algorithm Statement} %The Truncated-Newton's Method} 

% In the Euclidean setting, Newton's method is often used because of its superlinear or even quadratic rate of convergence. In this paper, therefore, we use also Riemannian Newton's method to solve the problem (\ref{RieNew-Prob}). 

%On Riemannian manifolds, all algorithms we consider is retraction-based. In other words, they iterate $x_{k+1} \gets R_{x_{k}}(\eta_k)$ for some step $\eta_k$. For Riemannian Newton's method, the step is called the Newton step given by solving the Newton equation: 
%\begin{equation}
%  \begin{aligned} \label{RieNew-NewEuq}
%    \mathrm{Hess}f(x)[\eta] = -\mathrm{grad}f(x),
%  \end{aligned}
%\end{equation}
%where the solution $\eta$ is our desired Newton step. 

% Since Newton's method will, in practice, encounter some drawbacks, such as  
%   (i) it is not globally convergent;
%   (ii) it is not defined at points where $\mathrm{Hess}f(x_k)$ is singular;
%   (iii) for non-convex problem, it even does not generete a sequence of descent directions 
% and so forth, Dembo and Steihaug in \cite{dembo_truncated-newton_1983} embedded the standard Newton's method in a modified Newton framework to break through these shortcomings. Here we will generalize it to Riemannian setting, as seen in Algorithm \ref{RieNew-TruncatedNewton}. Different with the method in \cite{dembo_truncated-newton_1983}, where the Wolfe conditions are required for computing stepsize, the stepsize in the proposed Algorithm \ref{RieNew-TruncatedNewton} only needs to satisfy weaker condition either (\ref{RieNew-BrydCond1}) or (\ref{RieNew-BrydCond2}).

The proposed Riemannian Truncated-Newton's method is stated in Algorithm~\ref{RieNew-TruncatedNewton}.

\begin{algorithm}[htbp]
  \caption{Riemannian Truncated-Newton's method} 
  \begin{algorithmic}[1] \label{RieNew-TruncatedNewton}
  \REQUIRE initial iterate $x_0$; line search parameters $\chi_1$ and $\chi_2\in(0,1)$; a preconditioner $\mathscr{P}(x):\T_x \mathcal{M} \rightarrow \T_x \mathcal{M}$ is a linear operator; a truncated conjugate gradient parameter $\epsilon>0$ and a forcing sequence $\{\phi_k\}$ satisfying $\phi_k>0$ and $\lim_{k\rightarrow \infty}\phi_k=0$; %\zwhcomm{}{}
  \ENSURE sequence $\{x_k\}$; %\sout{solution $x^*$} \zwhcomm{}{};
  \STATE Set $k\gets0$;
  \WHILE{do not accurate enough}
  % \STATE Compute $f(x_k)$, $\mathrm{grad}f(x_k)$ and $\mathrm{Hess}f(x_k)$;
  \STATE \label{RieNew:st01} Approximately solve the Newton equation $\Hess f(x_k) [\eta_k] = - \grad f(x_k)$ by the truncated conjugate gradient algorithm in Algorithm~\ref{RieNew-TrunConjGrad} with inputs $(\grad f(x_k), \Hess f(x_k), \mathscr{P}(x_k), \epsilon, \phi_k)$;
  \STATE \label{RieNew:st02} Find a step size $\alpha_k$ satisfying %\zwhcomm{ }{}
  \begin{equation}
  \begin{aligned}
    \label{RieNew-BrydCond1}
      h_k(\alpha_k)-h_k(0)\le -\chi_1 \frac{h'_k(0)^2}{\|\eta_k\|^2}, 
  \end{aligned}
  \end{equation}
  or 
  \begin{equation}
    \begin{aligned}
      \label{RieNew-BrydCond2}
      h_k(\alpha_k)- h_k(0) \le \chi_2 h'_k(0), 
    \end{aligned}
  \end{equation}
  where %\sout{$0<\chi_1<1$ and $0<\chi_2<1$ independent of $k$ are constants and} 
  $h_k(t)=f(R_{x_k}(t\eta_k))$;
  \STATE \label{RieNew:st03} Set $x_{k+1} \gets R_{x_k}(\alpha_k\eta_k)$;
  \STATE \label{RieNew:st04} Set $k\gets k+1$;
  \ENDWHILE
  
 \end{algorithmic}
\end{algorithm}

\begin{algorithm}[htbp]
  \caption{Truncated Preconditioned Conjugate Gradient Method (tPCG)} 
  \begin{algorithmic}[1] \label{RieNew-TrunConjGrad}
  \REQUIRE $(\mathrm{grad}f(x_k), \mathrm{Hess}f(x_k), \mathscr{P}(x_k), \epsilon, \phi_k)$, where $\epsilon>0$ and $\phi_k>0$;
  \ENSURE $\eta_k$;
  \STATE \label{RieNewtCG:st01} \textbf{Initializations:} \\ 
  Set $\eta^{(0)} \gets 0$, $r^{(0)}\gets -\mathrm{grad}f(x_k)$, $y^{(0)}\gets \mathscr{P}(x_k)^{-1}[r^{(0)}]$, $d^{(0)}\gets y^{(0)}$, $\delta^{(0)}\gets g_{x_k}(y^{(0)},y^{(0)})$, and $i\gets0$;
  \STATE \label{RieNewtCG:st02} \textbf{Check for negative curvature:} \label{RieNew-TrunConjGrad-CheckForNegCurv} \\ 
  Set $q^{(i)} \gets \mathrm{Hess}f(x_k)[d^{(i)}]$; \\
  If $g_{x_k}(d^{(i)}, q^{(i)}) \le \epsilon \delta_i$, return $\eta_k \gets \begin{cases} d^{(0)}, & \text{ if }i=0,\\ \eta^{(i)}, & \text{ otherwise}; \end{cases}$
  \STATE \label{RieNewtCG:st03} \textbf{Generate next inner iterate:}  \\ 
  Set $\alpha^{(i)} \gets g_{x_k}(r^{(i)}, y^{(i)})/g_{x_k}(d^{(i)},q^{(i)})$;\\ 
  Set $\eta^{(i+1)} \gets \eta^{(i)} + \alpha^{(i)}d^{(i)}$;
  \STATE \label{RieNewtCG:st04} \textbf{Update residual and search direction:} \\
  Set $r^{(i+1)} \gets r^{(i)} - \alpha^{(i)}q^{(i)}$; \\
  Set $y^{(i+1)} \gets \mathscr{P}(x_k)^{-1}[r^{(i+1)}]$; \\ 
  Set $\beta^{(i+1)}\gets g_{x_k}(r^{(i+1)},y^{(i+1)})/g_{x_k}(r^{(i)},y^{(i)})$;\\
  Set $d^{(i+1)} \gets y^{(i+1)} + \beta^{(i+1)}d^{(i)}$;\\ 
  Set $\delta^{(i+1)} \gets g_{x_k}(y^{(i+1)}, y^{(i+1)}) + \left(\beta^{(i+1)}\right)^2\delta^{(i)}$; 
  \STATE \label{RieNewtCG:st05} \textbf{Check residual:} \\ 
  If $\|r^{(i+1)}\|/\|\mathrm{grad}f(x_k)\|\le \phi_k$, return $\eta_k\gets \eta^{(i+1)}$; \\
  Set $i\gets i+1$; \\
  Return to Step \ref{RieNew-TrunConjGrad-CheckForNegCurv};
 \end{algorithmic}
\end{algorithm}

Step~\ref{RieNew:st01} of Algorithm~\ref{RieNew-TruncatedNewton} invokes Algorithm~\ref{RieNew-TrunConjGrad}, which approximately solves the Newton equation $\Hess f(x_k) [\eta_k] = - \grad f(x_k)$ by the truncated preconditioned conjugate gradient (tPCG) method. 
If $\Hess f(x_k)$ is not sufficiently positive definite along the direction $d^{(i)}$ in the sense that $g_{x_k}(d^{(i)}, \Hess f(x_k) [d^{(i)}])$ is sufficiently greater than 0, then the preconditioned conjugate gradient is terminated, see Step~\ref{RieNewtCG:st02} of Algorithm~\ref{RieNew-TrunConjGrad}. This early termination guarantees that the output $\eta_k$ is a sufficient descent direction, see Lemma~\ref{RieNew-DescentDir}. 
Moreover, the preconditioned conjugate gradient method is terminated if the relative residual is sufficiently small in the sense that $\|r^{(i+1)}\| / \| \grad f(x_k)\|$ is smaller than the forcing term $\phi_k$, see Step~\ref{RieNewtCG:st05} of Algorithm~\ref{RieNew-TrunConjGrad}. This condition not only reduces the computational cost by not requiring solving the Newton equation exactly but also ensures the local superlinear convergence rate of Algorithm~\ref{RieNew-TruncatedNewton}, see Theorem~\ref{RieNew-OrderOfConv}. 
Since the Newton equation is defined on the Euclidean space $\T_{x_k} \mathcal{M}$, Algorithm~\ref{RieNew-TrunConjGrad} is equivalent to the one in~\cite[Minor~Iteration]{dembo_truncated-newton_1983} when the preconditioner $\mathscr{P}(x):\mathrm{T}_x \mathcal{M} \rightarrow \mathrm{T}_x \mathcal{M}$ is an identity operator.
%Note that the sequence $\{\phi_k\}$ is called a forcing sequence, and the output is referred to as a truncated-Newton direction.

Step~\ref{RieNew:st02} is used to find a step size satisfying inequalities~\eqref{RieNew-BrydCond1} and~\eqref{RieNew-BrydCond2}, which is a Riemannian generalization of the line search conditions in~\cite{Byrd1989ATF}. It has been pointed out in~\cite{huang_riemannian_2018} that if the function $f$ is radially $L$-$C^1$ function (see Definition \ref{PrelNotat-RLipConti}), then many commonly-used line search conditions including the Wolfe conditions and Armijo-Glodstein conditions imply one or both of~\eqref{RieNew-BrydCond1} and~\eqref{RieNew-BrydCond2}. Therefore, Algorithm~\ref{RieNew-TruncatedNewton} uses a weaker line search condition than the one in~\cite{dembo_truncated-newton_1983}.

% In Step 4 of Algorithm \ref{RieNew-TruncatedNewton}, the solution $\eta_k$ satisfying (\ref{RieNew-TrunNewEqu}) is given by \sout{the} Algorithm \ref{RieNew-TrunConjGrad}, called truncated conjugate gradient method; see \cite{dembo_truncated-newton_1983}. Conditions (\ref{RieNew-BrydCond1}) and (\ref{RieNew-BrydCond2}) in \sout{step} Step 4 of Algorithm \ref{RieNew-TruncatedNewton} will make the objective function "sufficient descent". Many conditions, e.g., Armijo-Goldstein conditions, Wolfe conditions and so on, imply one or both of (\ref{RieNew-BrydCond1}) and (\ref{RieNew-BrydCond2}), when $\mathcal{M}$ is a Euclidean space and the gradient of the objective function is Lipschitz continuous \cite{richard_h_byrd_tool_1989}, \zwhcomm{which means that the line search condition (\ref{RieNew-BrydCond1}) and (\ref{RieNew-BrydCond2}) are  weaker than conditions mentioned above.}{} Considering that function $f\circ R_x : \mathrm{T}_{x}\mathcal{M} \rightarrow \mathbb{R}$ is defined on a linear space $\mathrm{T}_x\mathcal{M}$, when $\mathcal{M}$ is a manifold, the previous results also hold if \sout{the gradient of the function is Riemannian Lipschitz continuous} \zwhcomm{the function is radially $L$-$C^1$ function}{}(see Definition \ref{PrelNotat-RLipConti})\cite{huang_riemannian_2018}.

% For Equation (\ref{RieNew-TrunNewEqu}), we use truncated conjugate gradient (tCG) method to find $\eta_k$, as seen in Algorithm \ref{RieNew-TrunConjGrad}, where the subscript represents the outer iteration and the superscript means the inner iteration.

\subsection{Convergence Analysis}

In this section, we establish the convergence results by merging the theoretical techniques in~\cite{dembo_truncated-newton_1983} and~\cite{huang_riemannian_2018}. 

% {\color{red}
We firstly suppose the preconditioner $\mathscr{P}$ is self-adjoint positive definite and its maximum and minimum eigenvalues are respectively upper and below bounded by two constants independent of $x$. The formal statement is referred to Assumption~\ref{ass:precond}.
\begin{assumption} \label{ass:precond}
	%For any starting $x_0\in \mathcal{M}$, let $x\in L(x_0)$. 
 The preconditioner $\mathscr{P}(x): \mathrm{T}_x \mathcal{M} \rightarrow \mathrm{T}_x \mathcal{M}$ is self-adjoint positive definite and there exists two constant $p_1$ and $p_2$ such that for all $x \in L(x_0)$,
	\[
		p_1 \le \lambda_{\min}(\mathscr{P}(x)) \le \lambda_{\max}(\mathscr{P}(x)) \le p_2
	\]
	holds, where $\lambda_{\min}(\mathscr{P}(x))$ and $\lambda_{\max}(\mathscr{P}(x))$ denote the minimum and maximum eigenvalues of $\mathscr{P}(x)$. 
\end{assumption}
% }

Theorem~\ref{App-Theorem1} generalizes the properties of the conjugate gradient method in~\cite{Hestenes1952MethodsOC} to a generic Euclidean space while considering preconditioning. 
Essentially, preconditioning is a variable substitution, which is then solved by the ordinary conjugate gradient method. Then the variables are substituted back, and then the preconditioned scheme is obtained. Their theoretical properties are consistent. We refer interested readers to~\cite[P.118]{NW06} for more details.
Such generalizations are trivial and have been used in~\cite{AbsMahSep2008}. These results are given here for completeness and will be used in Lemma~\ref{RieNew-DescentDir}. 

\begin{theorem} \label{App-Theorem1}
%\whcomm{[ZWTODO: Not to use this simplified notation: notation $f_x$, $g_x$, $\mathcal{H}_x$, and $\left<\cdot,\cdot\right>_x$ to denote $f(x)$, $\mathrm{grad}f(x)$, $\mathrm{Hess}f(x)$ and $g_x(\cdot,\cdot)$ respectively for simplicity. ]}{}

%  If $\left<\mathcal{H}_x[d_i],d_i\right>_x>\epsilon \delta_i$, $i=0,1,2,\cdots,k$, then 

  If $g_x(\mathrm{Hess}f(x)[d^{(i)}], d^{(i)})>\epsilon \delta^{(i)}$, $i=0,1,\dots,k$, then
\begin{align}
      & g_x( \mathrm{Hess}f(x)[d^{(i)}],d^{(j)} ) = 0,\quad\quad\quad\quad\;\; i\not=j,\;i,j=0,1,\cdots,k, \label{App-Theorem1-1} \\ 
      & g_x( d^{(i)}, r^{(j)} )=0,\quad\quad\quad\quad\quad\quad\quad\quad\quad i<j,\;i,j=0,1,\cdots,k+1, \label{App-Theorem1-2} \\ 
      & g_x( r^{(i)}, d^{(j)} ) = g_x(r^{(j)}, y^{(j)}) = g_x(r^{(0)}, d^{(j)}),\; i\le j,\; i,j=0,1,\cdots,k, \label{App-Theorem1-3} \\ 
      & \mathrm{span}\{ d^{(0)},d^{(1)},\cdots,d^{(k)} \} = \mathrm{span}\big\{y^{(0)},\mathrm{Hess}f(x)[y^{(0)}],\cdots, (\mathrm{Hess}f(x))^{k-1}[y^{(0)}]\big\}, \label{App-Theorem1-4}\\ 
      & \delta^{(i)} = g_x(d^{(i)},d^{(i)}),\;i=0,1,\cdots,k. \label{App-Theorem1-6}
\end{align}
\end{theorem}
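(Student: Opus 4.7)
}
The natural plan is to establish all five identities simultaneously by induction on $k$. The base case $k=0$ is essentially trivial: (App-Theorem1-1) and (App-Theorem1-2) are vacuous (no distinct pair $(i,j)$); (App-Theorem1-3) collapses to $g_x(r^{(0)},d^{(0)})=g_x(r^{(0)},y^{(0)})$, which holds because $d^{(0)}=y^{(0)}$; (App-Theorem1-4) says $\sspan\{d^{(0)}\}=\sspan\{y^{(0)}\}$, again from $d^{(0)}=y^{(0)}$; and (App-Theorem1-6) matches the initialization $\delta^{(0)}=g_x(y^{(0)},y^{(0)})$. The genuine work is in the inductive step, and the ordering below is what I would follow.

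First I would prove $g_x(r^{(k+1)},d^{(i)})=0$ for $0\le i\le k$ (this is (App-Theorem1-2) at the new index). Expanding $r^{(k+1)}=r^{(k)}-\alpha^{(k)}q^{(k)}$ and using the definition $\alpha^{(k)}=g_x(r^{(k)},y^{(k)})/g_x(d^{(k)},q^{(k)})$, the case $i=k$ reduces to the inductive instance $g_x(r^{(k)},d^{(k)})=g_x(r^{(k)},y^{(k)})$ of (App-Theorem1-3), while the case $i<k$ falls out of the inductive (App-Theorem1-2) and the $\Hess f$-conjugacy (App-Theorem1-1). Next I would write $d^{(k+1)}=y^{(k+1)}+\beta^{(k+1)}d^{(k)}$ and, using the just-proved orthogonality together with the inductive (App-Theorem1-3), deduce (App-Theorem1-3) at index $k+1$. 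The $\Hess f$-conjugacy (App-Theorem1-1) at the new index then follows by expanding $g_x(\Hess f(x)[d^{(k+1)}],d^{(i)})$: the contribution from $\beta^{(k+1)}d^{(k)}$ vanishes by induction, and the contribution from $y^{(k+1)}$ is cancelled precisely by the choice of $\beta^{(k+1)}$ (which is what the PCG recursion is engineered for). The span identity (App-Theorem1-4) is an immediate bookkeeping argument from $d^{(k+1)}=y^{(k+1)}+\beta^{(k+1)}d^{(k)}$ together with $y^{(k+1)}=y^{(k)}-\alpha^{(k)}\mathscr{P}(x)^{-1}\Hess f(x)[d^{(k)}]$ and the inductive span identity.

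The last identity (App-Theorem1-6) is obtained by expanding $g_x(d^{(k+1)},d^{(k+1)})$ with $d^{(k+1)}=y^{(k+1)}+\beta^{(k+1)}d^{(k)}$; comparing with the $\delta^{(i+1)}$ recursion, the match reduces to showing that the cross term $g_x(y^{(k+1)},d^{(k)})$ vanishes. This cross-term cancellation is the main technical obstacle, because it is a preconditioned (not unpreconditioned) orthogonality and is not an immediate consequence of the residual-direction orthogonality in Step~1; one has to combine $y^{(k+1)}=\mathscr{P}(x)^{-1}r^{(k+1)}$, self-adjointness of $\mathscr{P}(x)^{-1}$, and the inductive span identity (App-Theorem1-4) to reduce it to the previously established relations.

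To keep the proof compact I would actually bypass much of the bookkeeping above by using the standard transformation trick: since $\mathscr{P}(x)$ is self-adjoint positive definite (Assumption~\ref{ass:precond}), it admits a self-adjoint square root $\mathscr{P}(x)^{1/2}$ on $\T_x\M$. Setting $\tilde d^{(i)}=\mathscr{P}(x)^{1/2}d^{(i)}$, $\tilde r^{(i)}=\mathscr{P}(x)^{-1/2}r^{(i)}$, and $\tilde H=\mathscr{P}(x)^{-1/2}\Hess f(x)\mathscr{P}(x)^{-1/2}$, Algorithm~\ref{RieNew-TrunConjGrad} becomes the classical unpreconditioned truncated conjugate gradient method for $\tilde H$, for which the analogues of (App-Theorem1-1)--(App-Theorem1-6) are exactly the results of~\cite{Hestenes1952MethodsOC}. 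Translating each identity back via $g_x(\mathscr{P}(x)^{1/2}u,\mathscr{P}(x)^{-1/2}v)=g_x(u,v)$ recovers the stated form. This viewpoint is the reason the paper describes the generalization as trivial.
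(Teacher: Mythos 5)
Your closing change-of-variables argument is, in substance, the paper's entire proof: the paper prints no derivation of Theorem~\ref{App-Theorem1}, only the remark that preconditioning is a variable substitution reducing Algorithm~\ref{RieNew-TrunConjGrad} to ordinary CG, with pointers to \cite{Hestenes1952MethodsOC} and \cite[P.118]{NW06}. So in approach you and the paper coincide. The gap is in your last claim, that translating back ``recovers the stated form''. With $\tilde{d}^{(i)}=\mathscr{P}(x)^{1/2}d^{(i)}$, $\tilde{r}^{(i)}=\mathscr{P}(x)^{-1/2}r^{(i)}$, $\tilde{H}=\mathscr{P}(x)^{-1/2}\Hess f(x)\mathscr{P}(x)^{-1/2}$, the identities \eqref{App-Theorem1-1}--\eqref{App-Theorem1-3} do come back verbatim, but \eqref{App-Theorem1-4} comes back as the Krylov space generated by $\mathscr{P}(x)^{-1}\Hess f(x)$ acting on $y^{(0)}$, not by $\Hess f(x)$ (and the powers should run to $k$), while \eqref{App-Theorem1-6} comes back as $\tilde{\delta}^{(i)}=g_x(\mathscr{P}(x)d^{(i)},d^{(i)})$ --- and even that only if the update were $\delta^{(i+1)}=g_{x_k}(r^{(i+1)},y^{(i+1)})+(\beta^{(i+1)})^2\delta^{(i)}$; Algorithm~\ref{RieNew-TrunConjGrad} uses $g_{x_k}(y^{(i+1)},y^{(i+1)})$, which is not the image of the classical recursion under your substitution. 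So the substitution does not deliver \eqref{App-Theorem1-4} and \eqref{App-Theorem1-6} as written unless $\mathscr{P}=\id$.

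The same issue breaks your inductive route at exactly the step you flagged: $g_x(y^{(k+1)},d^{(k)})=g_x(r^{(k+1)},\mathscr{P}(x)^{-1}d^{(k)})$ need not vanish, because $\mathscr{P}(x)^{-1}d^{(k)}$ generally lies outside $\sspan\{d^{(0)},\dots,d^{(k)}\}$, so self-adjointness plus \eqref{App-Theorem1-4} cannot reduce it to earlier relations --- it is not merely a technical obstacle but a false identity. Concretely, in $\mathbb{R}^2$ with the Euclidean metric, $\Hess f(x)=\diag(1,2)$, $\mathscr{P}(x)=\diag(1,4)$, $r^{(0)}=(1,1)^T$, the algorithm gives $d^{(1)}=(-5/81,\,10/81)^T$, $\delta^{(1)}=179/6561$, but $g_x(d^{(1)},d^{(1)})=125/6561$, while the curvature tests pass for any $\epsilon\le 1$; hence \eqref{App-Theorem1-6} itself fails for $\mathscr{P}\neq\id$, and no proof can close this. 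What your argument (and the paper's appeal to triviality) actually establishes is \eqref{App-Theorem1-1}--\eqref{App-Theorem1-3} together with the preconditioned forms of \eqref{App-Theorem1-4} and \eqref{App-Theorem1-6}; to make your write-up correct you should either restrict those two identities to $\mathscr{P}=\id$ or state them in the preconditioned form (noting that the $1/\epsilon$ bound in Lemma~\ref{RieNew-DescentDir} then needs Assumption~\ref{ass:precond} to pass from $g_x(\mathscr{P}(x)d^{(j)},d^{(j)})$ to $g_x(d^{(j)},d^{(j)})$, at the cost of a factor $1/p_1$).
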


Lemma~\ref{RieNew-DescentDir} proves that the search direction $\eta_k$ from Algorithm~\ref{RieNew-TrunConjGrad} is a descent direction and its norm is not small compared to the norm of $\grad f(x_k)$. It is a Riemannian generalization of~\cite[Lemma~A.2]{dembo_truncated-newton_1983}.
Since $\eta_k$ is a descent direction, there exists a step size satisfying Inequality~\eqref{RieNew-BrydCond1} or~\eqref{RieNew-BrydCond2} as shown in~\cite{NW06, huang_riemannian_2018}. Thus, Algorithm~\ref{RieNew-TruncatedNewton} is well-defined.

%In what follows, we firstly point out that the truncated-Newton direction $\eta_k$ generated by Algorithm \ref{RieNew-TrunConjGrad} is descent as stated in Lemma \ref{RieNew-DescentDir}.
\begin{lemma} \label{RieNew-DescentDir}
There exist two positive constants $\gamma_1$ and $\gamma_2$ that only rest with $\|\mathrm{Hess}f(x_k)\|$, $p_1$, $p_2$, and $\epsilon$ respectively so that 
\begin{equation}
  g_{x_k}(\mathrm{grad}f(x_k),\eta_k) \le -\gamma_1 \|\mathrm{grad}f(x_k)\|^2 \label{RieNew-Lem-1}
\end{equation}
and
\begin{equation}
  \begin{aligned}
     \|\eta_k\|\le \gamma_2 \|\mathrm{grad}f(x_k)\|. \label{RieNew-Lem-2}
  \end{aligned}
\end{equation}  

\begin{proof} %This is similar to the proof of \cite[Theorem A.3]{dembo_truncated-newton_1983}.
  Out of convenience, use $g_k$, $\mathcal{H}_k$ and $\left<\cdot,\cdot\right>_k$ to denote $\mathrm{grad}f(x_k)$, $\mathrm{Hess}f(x_k)$ and $g_{x_k}(\cdot,\cdot)$. Suppose that $\eta_k=\eta^{(i)}$, $i\ge0$. From  Algorithm~\ref{RieNew-TrunConjGrad}, we have 
\begin{align}
	\eta_k = \eta^{(i)} = \sum_{j=0}^{i-1}\alpha^{(j)}d^{(j)}. \label{RieNew-Lem-3}
\end{align}
From Step~\ref{RieNewtCG:st03} in Algorithm~\ref{RieNew-TrunConjGrad} and (\ref{App-Theorem1-3}) we have that 
\begin{align}
  \alpha^{(j)} = \frac{ \left< d^{(j)}, r^{(0)} \right>_k }{ \left< d^{(j)},q^{(j)} \right>_k }. \label{RieNew-Lem-4}
\end{align} 
Hence using \eqref{RieNew-Lem-3} and \eqref{RieNew-Lem-4} results in 

\begin{equation}
  \begin{aligned} \nonumber
  \left<\eta_k,g_k\right>_k &= \left<\eta^{(i)},-r^{(0)}\right>_k=-\left< \sum_{j=0}^{i-1} \frac{ \left<d^{(j)},r^{(0)}\right>_k d^{(j)}}{\left<d^{(j)},q^{(j)}\right>_k},r^{(0)} \right>_k  = -\sum_{j=0}^{i-1}\frac{\left<d^{(j)},r^{(0)}\right>_k^2}{\left<d^{(j)},q^{(j)}\right>_k}  \\ 
  & \le - \frac{\left<d^{(0)},r^{(0)}\right>_k^2}{\left<d^{(0)},q^{(0)}\right>_k} = -\frac{\left<d^{(0)},r^{(0)}\right>_k^2}{\left<d^{(0)},\mathcal{H}_k[d^{(0)}]\right>_k}.
%  =- \frac{\left<d^{(0)},d^{(0)}\right>_k}{\left<d^{(0)},\mathcal{H}_k[d^{(0)}]\right>_k}\|g_k\|^2
  \end{aligned}
\end{equation}
Noting that $r(0)=\mathscr{P}_kd^{(0)}$, and $r^{(0)}=-g_k$, by the Cauchy-Swartz inequality, we have 
\begin{align} \nonumber
\left<d^{(0)},r^{(0)}\right>^2_k \ge \frac{p_1}{p_2}\left<d^{(0)},d^{(0)}\right>_k\|g_k\|^2.
\end{align}
Consequently, it follows that
\[
  \frac{ \left< d^{(0)},d^{(0)} \right>^2_k }{\left<d^{(0)},\mathcal{H}_x[d^{(0)}] \right>_k}\ge \frac{\|g_k\|^2}{\bar{\kappa}{\|\mathcal{H}_k}\|} \ge\frac{1}{\bar{\kappa} \bar{\gamma}_1}\|g_k\|^2, 
\] 
where $\bar{\kappa}:=p_2/p_1$, $\bar{\gamma}_1>0$ is a constant and the existence of $\bar{\gamma_1}$ is guaranteed by the uniformly boundedness of $\mathcal{H}_k$ from Assumptions~\ref{TwiceContinuDiff} and~\ref{BoundedLevel}. Therefore, inequality~\eqref{RieNew-Lem-1} with $\gamma_1=1/(\bar{\kappa}\bar{\gamma}_1)$ holds.
%So picking \sout{$\gamma_1 = \min \left\{ 1, {\|\mathcal{H}_k\|}^{-1} \right\}$} \zwhcomm{$\gamma_1 = \min\left\{1,\sigma^{-1}\right\}$}{}, we obtain the result (\ref{RieNew-Lem-1}). 
Note from (\ref{RieNew-Lem-3}) and (\ref{RieNew-Lem-4}) that 
\[
  \eta^{(i)} = \sum_{j=0}^{i-1}\frac{\left<d^{(j)},r^{(0)}\right>_k}{\left<d^{(j)},q^{(j)}\right>_k}d^{(j)}.
  \]
Hence we have 
\begin{equation}
  \begin{aligned} \nonumber
    \|\eta_k\| = \|\eta^{(i)}\| &= \left\|  \sum_{j=0}^{i-1}\frac{\left<d^{(j)},r^{(0)}\right>_k}{\left<d^{(j)},q^{(j)}\right>_k}d^{(j)} \right\| \le \sum_{j=0}^{i-1} \frac{|\left< d^{(j)},r^{(0)} \right>_k|}{\left<d^{(j)},q^{(j)}\right>_k} \|d^{(j)}\| \\ 
    & \le \sum_{j=0}^{i-1} \frac{ \|d^{(j)}\|\|r^{(0)}\|\|d^{(j)}\| }{\left<d^{(j)},q^{(j)}\right>_k} = \sum_{j=0}^{i-1}\frac{\|d^{(j)}\|^2}{\left<d^{(j)},q^{(j)}\right>_k}\|r^{(0)}\| \\ 
    &= \sum_{j=0}^{i-1} \frac{\left<d^{(j)},d^{(j)}\right>_k}{\left<d^{(j)},q^{(j)}\right>_k}\|r^{(0)}\| \le i \frac{1}{\epsilon} \|r^{(0)}\| \\ 
    &= i \frac{1}{\epsilon} \|g_k\|.
  \end{aligned}
\end{equation}
So for $\gamma_2=\max\{n \epsilon^{-1},1\}$, we have the desired result (\ref{RieNew-Lem-2}), where $n=\dim(\mathrm{T}_{x_k}\mathcal{M}$). 
\end{proof}
\end{lemma}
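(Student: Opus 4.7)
The plan is to exploit the conjugacy structure of Algorithm~\ref{RieNew-TrunConjGrad} via Theorem~\ref{App-Theorem1} to write $\eta_k$ explicitly as a linear combination of the inner search directions $d^{(j)}$, and then to extract both bounds from that formula. Let $i$ denote the inner iteration index at which tPCG terminates, so that $\eta_k=\sum_{j=0}^{i-1}\alpha^{(j)}d^{(j)}$ (treating separately the negative-curvature-at-iterate-zero case, where $\eta_k=d^{(0)}$). Since the negative-curvature check was not triggered on any of the steps that contributed to $\eta_k$, we have $\langle d^{(j)}, q^{(j)}\rangle_k > \epsilon\,\delta^{(j)} = \epsilon\|d^{(j)}\|^2$ by identity~\eqref{App-Theorem1-6}, and Theorem~\ref{App-Theorem1} also provides the orthogonality identities $\langle d^{(j)}, r^{(0)}\rangle_k = \langle d^{(j)}, r^{(j)}\rangle_k = \langle r^{(j)}, y^{(j)}\rangle_k$ needed to simplify the coefficients.

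For the descent inequality~\eqref{RieNew-Lem-1}, I would substitute the closed form $\alpha^{(j)}=\langle d^{(j)}, r^{(0)}\rangle_k/\langle d^{(j)}, q^{(j)}\rangle_k$ into $\langle \mathrm{grad}f(x_k), \eta_k\rangle_k = -\langle r^{(0)}, \eta_k\rangle_k$ and collapse the sum. Because each $\langle d^{(j)}, q^{(j)}\rangle_k$ is positive under the termination rule, every summand in the resulting expression is nonpositive, so dropping all but the $j=0$ term gives
\[
\langle \mathrm{grad}f(x_k), \eta_k\rangle_k \le -\frac{\langle d^{(0)}, r^{(0)}\rangle_k^{\,2}}{\langle d^{(0)}, \mathcal{H}_k d^{(0)}\rangle_k}.
\]
The numerator must then be bounded below in terms of $\|\mathrm{grad}f(x_k)\|^2$. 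Using $d^{(0)}=\mathscr{P}(x_k)^{-1}r^{(0)}$, Assumption~\ref{ass:precond} yields $\langle d^{(0)}, r^{(0)}\rangle_k = \langle d^{(0)}, \mathscr{P}(x_k) d^{(0)}\rangle_k \ge p_1 \|d^{(0)}\|^2$ and $\|r^{(0)}\|^2 \le p_2^2\|d^{(0)}\|^2$, so $\langle d^{(0)}, r^{(0)}\rangle_k^{\,2} \ge (p_1/p_2^2)\|d^{(0)}\|^2\|r^{(0)}\|^2$. Combined with $\langle d^{(0)},\mathcal{H}_k d^{(0)}\rangle_k \le \|\mathcal{H}_k\|\,\|d^{(0)}\|^2$ and the uniform bound on $\|\mathcal{H}_k\|$ over $L(x_0)$ coming from Assumptions~\ref{TwiceContinuDiff} and~\ref{BoundedLevel}, this gives~\eqref{RieNew-Lem-1} with an explicit $\gamma_1$ depending only on $p_1, p_2$, and the uniform Hessian bound.

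For the norm bound~\eqref{RieNew-Lem-2}, I would apply the triangle inequality and Cauchy–Schwarz term by term,
\[
\|\eta_k\| \le \sum_{j=0}^{i-1}\frac{|\langle d^{(j)}, r^{(0)}\rangle_k|}{\langle d^{(j)}, q^{(j)}\rangle_k}\,\|d^{(j)}\| \le \sum_{j=0}^{i-1}\frac{\|d^{(j)}\|^2}{\langle d^{(j)}, q^{(j)}\rangle_k}\,\|r^{(0)}\|,
\]
and then invoke the negative-curvature inequality $\langle d^{(j)}, q^{(j)}\rangle_k > \epsilon\|d^{(j)}\|^2$ to cancel $\|d^{(j)}\|^2$ in each summand. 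The number of summands is at most $\dim(\mathrm{T}_{x_k}\mathcal{M})$, because the conjugacy property~\eqref{App-Theorem1-1} forces the $d^{(j)}$ to be linearly independent, so the sum is bounded by $(\dim\mathcal{M})\,\epsilon^{-1}\|\mathrm{grad}f(x_k)\|$, yielding~\eqref{RieNew-Lem-2} with $\gamma_2=\max\{\dim(\mathcal{M})/\epsilon,1\}$ (the maximum handles the edge case $i=0$ where $\eta_k=d^{(0)}$).

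The main obstacle I anticipate is the descent direction step: the orthogonality/conjugacy relations in Theorem~\ref{App-Theorem1} are stated without preconditioning front-of-mind, and one must carefully track the fact that $r^{(0)}$, $d^{(0)}$, and $y^{(0)}$ differ by $\mathscr{P}(x_k)$. The trick is to recognize that $\langle d^{(0)}, r^{(0)}\rangle_k$ equals $\langle r^{(0)}, \mathscr{P}(x_k)^{-1} r^{(0)}\rangle_k$, which is what allows the preconditioner condition number $p_2/p_1$ to enter the bound cleanly. Everything else is a routine application of conjugacy and of the uniform boundedness hypotheses, so once the preconditioned Cauchy–Schwarz step is handled carefully the constants emerge immediately.
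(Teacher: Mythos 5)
Your proposal is correct and follows essentially the same route as the paper's proof: expand $\eta_k=\sum_j \alpha^{(j)}d^{(j)}$, use the conjugacy identities of Theorem~\ref{App-Theorem1} to reduce the descent bound to the $j=0$ term with the preconditioner bounds $p_1,p_2$ supplying the analogue of Cauchy--Schwarz, and obtain the norm bound from the curvature threshold $\epsilon$ together with the fact that at most $\dim(\mathrm{T}_{x_k}\mathcal{M})$ conjugate directions can occur. The only deviation is a harmless arithmetic slip in your intermediate constant (your derivation gives $(p_1/p_2)^2$ rather than $p_1/p_2^2$; the paper gets $p_1/p_2$), which does not affect the conclusion since any positive constant depending only on $p_1,p_2$ suffices.
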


%In Algorithm \ref{RieNew-TruncatedNewton}, we compute descent direction $\eta_k$ by tCG to solve the truncated-Newton equation (\ref{RieNew-TrunNewEqu}). Theorem \ref{RieNew-GlobalConver} tell us that the Algorithm \ref{RieNew-TruncatedNewton} has global convergence. 

Now, we are ready to give the global convergence result of Algorithm~\ref{RieNew-TruncatedNewton} in Theorem~\ref{RieNew-GlobalConver}. The proofs of Theorem~\ref{RieNew-GlobalConver} rely on the techniques in~\cite{huang_riemannian_2018}, not the ones in~\cite{dembo_truncated-newton_1983}.
\begin{theorem} \label{RieNew-GlobalConver}
  Let $\{x_n\}$ denote the sequence generated by Algorithm \ref{RieNew-TruncatedNewton}. Then it holds that
  \[
    \lim_{n \rightarrow \infty}\|\mathrm{grad}f(x_n)\|=0.
    \]
 If $x^*$ is accumulation point of the sequence $\{x_k\}$ and $\mathrm{Hess}f(x^*)$ is positive definite, then $x_k \rightarrow x^*$.
\end{theorem}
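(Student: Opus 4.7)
The plan is to prove the two statements in sequence: first a Zoutendijk-style argument to establish $\|\grad f(x_k)\|\to 0$, then a capture argument that upgrades a single accumulation point to convergence of the whole sequence.

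For the first part, I would begin by verifying that Algorithm~\ref{RieNew-TruncatedNewton} is well-defined, namely that some step size satisfying~\eqref{RieNew-BrydCond1} or~\eqref{RieNew-BrydCond2} exists. This is immediate from Lemma~\ref{RieNew-DescentDir}, which gives $\eta_k$ as a descent direction, combined with Assumption~\ref{RadLipCont} on the radial Lipschitz property (this is the exact setting in which the argument in \cite{huang_riemannian_2018} applies). I would then convert each line-search condition into a uniform decrease bound. If~\eqref{RieNew-BrydCond1} is the one satisfied at step $k$, then using $h_k'(0)=g_{x_k}(\grad f(x_k),\eta_k)\le -\gamma_1\|\grad f(x_k)\|^2$ together with $\|\eta_k\|\le\gamma_2\|\grad f(x_k)\|$ from Lemma~\ref{RieNew-DescentDir} yields
\[
  f(x_{k+1})-f(x_k)\;\le\;-\frac{\chi_1\gamma_1^{2}}{\gamma_2^{2}}\,\|\grad f(x_k)\|^{2};
\]
if instead~\eqref{RieNew-BrydCond2} is the one satisfied, the descent bound follows directly with constant $\chi_2\gamma_1$. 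Taking $C$ as the smaller of the two constants, I get $f(x_{k+1})-f(x_k)\le -C\|\grad f(x_k)\|^{2}$ uniformly in $k$. Since $\{x_k\}\subset L(x_0)$ is bounded by Assumption~\ref{BoundedLevel} and $f$ is continuous, $f$ is bounded below on $L(x_0)$, so the telescoping sum $\sum_k(f(x_k)-f(x_{k+1}))$ converges, forcing $\sum_k\|\grad f(x_k)\|^{2}<\infty$ and hence $\|\grad f(x_k)\|\to 0$.

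For the capture statement, let $x^*$ be an accumulation point with $\Hess f(x^*)\succ 0$. By continuity of the Hessian there is a neighborhood $U$ on which $\Hess f\succeq \mu I$ for some $\mu>0$; in particular $x^*$ is a strict and isolated local minimum, and $f(x)\ge f(x^*)+\frac{\mu}{4}\dist(x,x^*)^{2}$ on some ball $B_{r}(x^*)\subset U$. The monotonic decrease of $\{f(x_k)\}$ together with the existence of a subsequence $x_{k_j}\to x^*$ shows that $f(x_k)\downarrow f(x^*)$. I would then invoke a capture lemma along the lines of \cite[Theorem~4.4.1]{AbsMahSep2008}: once $x_{k_j}$ is close enough to $x^*$ that it lies in $B_{r/2}(x^*)$ with $f(x_{k_j})-f(x^*)$ small enough that the quadratic lower bound prevents the next iterate from crossing into $\{x: \dist(x,x^*)>r\}$, the tail stays inside $B_{r}(x^*)$ and the quadratic bound forces $x_k\to x^*$. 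The step-length-control hypothesis of such a capture lemma is $\dist(x_k,x_{k+1})\to 0$, which holds here because $\dist(x_k,x_{k+1})\le \|\alpha_k\eta_k\|$ up to a retraction constant, $\|\eta_k\|\le\gamma_2\|\grad f(x_k)\|\to 0$, and $\alpha_k$ is bounded above (a failure of this would contradict the descent bound and the boundedness of $L(x_0)$ via a standard argument).

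The main obstacle I expect is the capture step, specifically establishing that the iterates do not escape a neighborhood of $x^*$ once they enter it. The Euclidean argument is routine, but on a manifold I have to handle the retraction carefully so that manifold distance along the iterates is controlled by $\|\alpha_k\eta_k\|$, and I have to verify a uniform upper bound on $\alpha_k$ compatible with the two admissible line-search conditions~\eqref{RieNew-BrydCond1}--\eqref{RieNew-BrydCond2}; this last point does not follow from a classical Wolfe analysis and is where the technique of \cite{huang_riemannian_2018} is essential.
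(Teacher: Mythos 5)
Your proof of the first claim is correct and is essentially the paper's argument: both convert whichever of \eqref{RieNew-BrydCond1} or \eqref{RieNew-BrydCond2} holds into a per-iteration decrease via Lemma~\ref{RieNew-DescentDir}, telescope over $k$ using boundedness of the level set, and conclude $\|\grad f(x_k)\|\rightarrow 0$; the paper keeps the bound in terms of $-g_{x_k}(\grad f(x_k),\eta_k)$ and applies \eqref{RieNew-Lem-1} at the end, while you substitute \eqref{RieNew-Lem-1}--\eqref{RieNew-Lem-2} immediately, which is the same substance.

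The genuine gap is in your capture argument for the second claim. Your plan hinges on the step-control hypothesis $\dist(x_k,x_{k+1})\rightarrow 0$, which you reduce to ``$\alpha_k$ is bounded above,'' justified by asserting that unbounded $\alpha_k$ would contradict the descent bound and the boundedness of $L(x_0)$. That justification does not hold: neither \eqref{RieNew-BrydCond1} nor \eqref{RieNew-BrydCond2} imposes any upper bound on $\alpha_k$ (both are pure sufficient-decrease conditions with right-hand sides independent of $\alpha_k$), and since $\|\eta_k\|\le\gamma_2\|\grad f(x_k)\|\rightarrow 0$, an arbitrarily large $\alpha_k$ is perfectly compatible with the required decrease and with $x_{k+1}$ remaining in the bounded level set; at best such reasoning gives $\alpha_k\|\eta_k\|$ bounded by a quantity comparable to the diameter of $L(x_0)$, not $\alpha_k\|\eta_k\|\rightarrow 0$. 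Without this hypothesis the capture lemma cannot be invoked, and a sequence satisfying all stated conditions could in principle keep leaving every small ball around $x^*$ (e.g., hopping toward another stationary region at the same function value), so your part 2 is incomplete as written. For comparison, the paper takes a much shorter route here: every accumulation point is stationary because $\grad f(x_k)\rightarrow 0$, and positive definiteness of $\Hess f(x^*)$ makes $x^*$ isolated among stationary points, whence convergence of the whole sequence is asserted. That route avoids your quadratic-growth machinery, although it, too, tacitly relies on some link between consecutive iterates (e.g., connectedness of the accumulation set) of exactly the kind you were trying to establish; to make your version airtight you would need an explicit cap on admissible step sizes (for instance, step sizes produced by backtracking from a bounded trial step, as in the implementation) or a direct proof that $\alpha_k\|\eta_k\|\rightarrow 0$.
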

  \begin{proof}
    Let $\{x_n\}$, $\{\eta_n\}$ be generated by Algorithm \ref{RieNew-TruncatedNewton} and Algorithm \ref{RieNew-TrunConjGrad} respectively and step sizes $\{\alpha_n\}$ satisfy either (\ref{RieNew-BrydCond1}) or (\ref{RieNew-BrydCond2}). Noting that (\ref{RieNew-Lem-1}) and (\ref{RieNew-Lem-2}), we have 
    \begin{align*}
      \infty & > f(x_0) - f(x_{n})=\sum\limits_{k=0}^{n-1}(f(x_{k})-f(x_{k+1}))=\sum\limits_{k=0}^{n}(h_k(0)-h_{k}(\alpha_{k})) \\ & \ge \sum\limits_{k=0}^{n-1}\min \left(\chi_{1}\frac{h_{k}'(0)^{2}}{\|\eta_{k}\|^{2}},-\chi_{2}h_{k}'(0) \right) \\ &=\sum\limits_{k=0}^{n-1}\min \left( \chi_{1}\frac{g(\mathrm{grad}f(x_{k}),\eta_{k})^{2}}{\|\eta_{k}^{2}\|}, -\chi_{2}g(\mathrm{grad}f(x_{k}),\eta_{k}) \right) \\ & \ge \sum\limits_{k=0}^{n-1}\min \left( \chi_{1}\gamma_{1}\frac{\|\mathrm{grad}f(x_{k})\|^{2}}{\|\eta_{k}\|^{2}}(-g(\mathrm{grad}f(x_{k}),\eta_{k})), -\chi_{2}g(\mathrm{grad}f(x_{k}),\eta_{k}) \right) \\ & \ge \sum\limits_{k=0}^{n-1}\min \left(\chi_{1}{\gamma_{1}}{\gamma_{2}^{-2}},\chi_{2} \right)(-g(\mathrm{grad}f(x_{k}),\eta_{k})),
    \end{align*}
    which implies 
    \begin{align}
      \lim_{n \rightarrow \infty}(-g(\mathrm{grad}f(x_n),\eta_n))=0. \label{RieNew-Theorem-Conv}
    \end{align}
    Combining (\ref{RieNew-Lem-1}) and (\ref{RieNew-Theorem-Conv}) we have 
    \[
      0\le \gamma_1 \|\mathrm{grad}f(x_n)\|^2\le  (-g(\mathrm{grad}f(x_n),\eta_n)) \rightarrow 0 \;(n \rightarrow \infty).
      \]
          
  For the second part, noting that $\|\mathrm{grad}f(x_k)\|\rightarrow 0$ and continuity of $\mathrm{grad}f$, any accumulation point $\tilde{x}$ of the sequence $\{x_k\}$ is a stationary point of $f$. Under Assumption \ref{BoundedLevel}, each subsequence of $\{x_k\}$ converges to a stationary point. Therefore, $\{x_k\}$ converges to $x^*$ since $\mathrm{Hess}f(x^*)$ is positive definite.
  \end{proof}

The local convergence rate of Algorithm~\ref{RieNew-TruncatedNewton} is established in Theorem~\ref{RieNew-OrderOfConv}. The convergence rates are generalizations of the results in~\cite{dembo_truncated-newton_1983}. However, the proofs are not simply generalizations of~\cite{dembo_truncated-newton_1983}. That the step size one eventually satisfies the line search condition is guaranteed by the Riemannian Dennis-Mor{\'e} condition in~\cite{ring_optimization_2012}. The analysis of the order of convergence requires the relationship between multiple definitions of Riemannian gradients and Riemannian Hessians, i.e., $\grad f(R_x(\eta_x))$ versus $\grad (f \circ R_x) (\eta_x)$ and $\Hess f(x_k)$ versus $\Hess (f \circ R_x)(0_x)$.
\begin{theorem}
	\label{RieNew-OrderOfConv} 
  Let $\{x_k\}$ be the sequence generated by Algorithm \ref{RieNew-TruncatedNewton} with forcing sequence 
  $ 
  \phi_k = \min\left\{\beta,\|\mathrm{grad}f(x_k)\|^t\right\}, 0 < \beta,t \le 1 .
  $
  Suppose that $\{x_k\}$ converges to $x^*$ at which $\mathrm{Hess}f(x^*)$ is positive definite. Then 
  \begin{enumerate}
    \item[1.]  the stepsize $\alpha_k=1$ is acceptable for sufficiently large $k$; and 
    \item[2.] the convergence rate is superlinear\footnote{A sequence $\{x_k\}\subset \mathcal{M}$ converging to $x^*\in\mathcal{M}$ is superlinear if $ \frac{\mathrm{dist}(x^*,x_{k+1})}{\mathrm{dist}(x^*,x_k)} \rightarrow 0(k \rightarrow \infty). $ }.
  \end{enumerate}
Moreover, suppose that $\mathrm{Hess}\hat{f}(0_{x_k})$ is sufficiently close to $\mathrm{Hess}f(x_k)$, i.e., it holds that $\|\mathrm{Hess}f(x_k)- \mathrm{Hess}\hat{f}_{x_k}(0_{x_k})\|\le \beta_1 \|\mathrm{grad}f(x_k)\|$, $\hat{f}$ as defined in Definition \ref{PrelNotat-Pullback}, with a positive constant $\beta_1$, and that $\mathrm{Hess}\hat{f}_x$ is Lipschitz-continous at $0_x$ uniformly in $x$ in a neighborhood of $x^*$, i.e., there exist $\beta_2>0,\mu_1 > 0$ and $\mu_2>0$ such that for all $x\in B_{\mu_{1}}(x^*)$ and all $\eta_x\in B_{\mu_2}(0_x)$, it holds that $\|\mathrm{Hess}\hat{f}_x(\eta_x)-\mathrm{Hess}\hat{f}_x(0_x)\|\le \beta_2\|\eta_x\|$. Then, 
\begin{enumerate}
	\item[3.] the convergence rate is $1+\min(1,t)$\footnote{A sequence $\{x_k\}\subset \mathcal{M}$ converging to $x^*\in\mathcal{M}$ has Q-order of $1+\min(1,t)$ if $\limsup_{k \rightarrow \infty} \frac{\mathrm{dist}(x^*,x_{k+1})}{\mathrm{dist}(x^*,x_k)^{1+\min(1,t)}} <\infty. $}.
\end{enumerate}
\end{theorem}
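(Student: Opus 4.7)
My plan is to prove the three claims in sequence, using the inexact Newton framework of Dembo et al.\ combined with pullback analysis on the manifold.

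\textbf{Setting up the inexact Newton relation.} First I would note that the stopping rule in Step~\ref{RieNewtCG:st05} of Algorithm~\ref{RieNew-TrunConjGrad} together with the explicit form of $\phi_k$ guarantees
\[
  \|\Hess f(x_k)[\eta_k] + \grad f(x_k)\| \le \phi_k \|\grad f(x_k)\| \le \|\grad f(x_k)\|^{1+t}.
\]
Since $x_k \to x^*$ and $\Hess f$ is continuous with $\Hess f(x^*) \succ 0$, there is $K$ and constants $0 < m \le M$ with $m I \preceq \Hess f(x_k) \preceq M I$ for $k \ge K$. Combining this with the residual bound yields both $\|\eta_k\| \le (m^{-1}+o(1))\|\grad f(x_k)\|$ and $\|\eta_k\| \ge (M^{-1}-o(1))\|\grad f(x_k)\|$, so $\|\eta_k\|$ and $\|\grad f(x_k)\|$ are comparable, and
\[
\frac{\|\Hess f(x_k)[\eta_k] + \grad f(x_k)\|}{\|\eta_k\|} \to 0,
\]
which is the Riemannian Dennis--Mor\'e condition.

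\textbf{Claim 1 (unit step).} With Dennis--Mor\'e in hand, I would invoke the line-search result in the style of~\cite{huang_riemannian_2018}: since $\eta_k$ is asymptotically the Newton direction and $\Hess \hat f_{x_k}(0_{x_k})$ differs from $\Hess f(x_k)$ by a term that vanishes along the iteration (this is standard up to first order, independent of retraction), a second-order Taylor expansion of $h_k$ at $0$ gives
\[
  h_k(1) - h_k(0) = h_k'(0) + \tfrac12 g_{x_k}(\eta_k, \Hess \hat f_{x_k}(0_{x_k})\eta_k) + o(\|\eta_k\|^2) = \tfrac12 h_k'(0) + o(h_k'(0)),
\]
because $g_{x_k}(\eta_k,\Hess f(x_k)\eta_k) = -g_{x_k}(\eta_k,\grad f(x_k)) + o(\|\eta_k\|^2)$ by the Dennis--Mor\'e relation. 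For large $k$ this forces $h_k(1)-h_k(0) \le \chi_2 h_k'(0)$ (once $\chi_2 < 1/2$ is handled by picking the alternative condition, or by choosing which of \eqref{RieNew-BrydCond1}/\eqref{RieNew-BrydCond2} to verify), so $\alpha_k=1$ is admissible.

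\textbf{Claim 2 (superlinear rate).} With $\alpha_k=1$, $x_{k+1}=R_{x_k}(\eta_k)$. Let $\xi_k = R_{x_k}^{-1}(x^*) \in \T_{x_k}\M$, which is well defined for large $k$ and satisfies $\|\xi_k\| \asymp \dist(x_k,x^*)$. Applying a mean-value-type expansion of $\grad \hat f_{x_k}$ along the segment from $0_{x_k}$ to $\xi_k$ and using $\grad \hat f_{x_k}(\xi_k) = \grad f(x^*) = 0$, I obtain $\grad f(x_k) = -\bar{\mathcal H}_k \xi_k$ for some averaged Hessian operator $\bar{\mathcal H}_k \to \Hess f(x^*)$. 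Combining this with the inexact Newton bound, I write
\[
  \eta_k - \xi_k = -\Hess f(x_k)^{-1}\bigl(\Hess f(x_k)[\eta_k] + \grad f(x_k)\bigr) + \Hess f(x_k)^{-1}(\bar{\mathcal H}_k - \Hess f(x_k))\xi_k,
\]
and both terms are $o(\|\xi_k\|)$ thanks to $\phi_k \to 0$ and the continuity of the Hessian. A standard retraction estimate $\dist(x^*, x_{k+1}) \le \|\eta_k - \xi_k\|(1 + o(1))$ then gives superlinearity.

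\textbf{Claim 3 (order $1+\min(1,t)$).} Under the additional Lipschitz hypothesis on $\Hess \hat f_x$ and the closeness $\|\Hess f(x_k) - \Hess \hat f_{x_k}(0_{x_k})\| \le \beta_1 \|\grad f(x_k)\|$, I upgrade both error terms in the decomposition of $\eta_k - \xi_k$. The first term is bounded by $m^{-1}\phi_k\|\grad f(x_k)\| \lesssim \|\grad f(x_k)\|^{1+t} \lesssim \|\xi_k\|^{1+t}$; for the second, I replace $\bar{\mathcal H}_k$ by $\Hess \hat f_{x_k}(0_{x_k})$ (error $O(\|\xi_k\|)$ from the Lipschitz hypothesis) and then replace $\Hess \hat f_{x_k}(0_{x_k})$ by $\Hess f(x_k)$ (error $O(\|\grad f(x_k)\|)=O(\|\xi_k\|)$ from the closeness assumption), and the difference of these operators applied to $\xi_k$ is $O(\|\xi_k\|^2)$. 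Hence $\|\eta_k-\xi_k\| = O(\|\xi_k\|^{1+\min(1,t)})$, giving the Q-order $1+\min(1,t)$.

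\textbf{Main obstacle.} The delicate point is Claim 3, where I must disentangle three distinct operators acting on tangent vectors, namely the Riemannian Hessian $\Hess f(x_k)$ used by the algorithm, the pullback Hessian $\Hess \hat f_{x_k}(0_{x_k})$ which naturally controls $R_{x_k}$-based Taylor expansions, and the averaged operator $\bar{\mathcal H}_k$ arising from the fundamental theorem of calculus applied to $\grad \hat f_{x_k}$. Converting the bound on $\eta_k - \xi_k$ into a bound on $\dist(x^*,x_{k+1})$ also requires a careful retraction-distance inequality uniform in a neighborhood of $x^*$; I would cite the standard local equivalence $\dist(R_x(\eta),R_x(\zeta)) \asymp \|\eta-\zeta\|$ valid for small tangent vectors in a neighborhood of the limit.
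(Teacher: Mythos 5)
Your proposal is correct in substance but follows a genuinely different route from the paper. The paper proves parts 1 and 2 exactly as you begin: it derives the residual bound and the comparability $\|\grad f(x_k)\|\le 2L\|\eta_k\|$, $\|\eta_k\|\le 2\tilde L\|\grad f(x_k)\|$, concludes the Dennis--Mor\'e condition $\|\grad f(x_k)+\Hess f(x_k)[\eta_k]\|/\|\eta_k\|\to 0$, and then simply cites \cite[Propositions 5 and 8]{ring_optimization_2012} for unit-step acceptance and superlinearity. For part 3 it never introduces $\xi_k=R_{x_k}^{-1}(x^*)$: it Taylor-expands the pullback gradient at $0_{x_k}$ evaluated at the \emph{computed} step, $\grad\hat f_{x_k}(\eta_k)=r_k+(\Hess\hat f_{x_k}(0_{x_k})-\Hess f(x_k))[\eta_k]+O(\|\eta_k\|^2)$, bounds $\|\grad f(x_{k+1})\|$ through this together with the comparison inequalities of \cite[Lemmas 7.4.8 and 7.4.9]{AbsMahSep2008}, and converts gradient norms into distances. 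You instead measure the error in tangent coordinates ($\eta_k-\xi_k$, with the averaged operator $\bar{\mathcal H}_k$), in the spirit of the classical Dembo--Eisenstat--Steihaug analysis. Your route is self-contained (no Ring--Wirth propositions) and yields parts 2 and 3 from one decomposition, but it requires auxiliary local facts the paper's route avoids: uniform local invertibility of $R_{x_k}$ near $x^*$, the two-sided estimate $\dist(R_x(\eta),R_x(\zeta))\asymp\|\eta-\zeta\|$, and, already for part 2, that $\Hess\hat f_{x_k}(0_{x_k})-\Hess f(x_k)\to0$ and $\bar{\mathcal H}_k\to\Hess f(x^*)$. These last facts do follow from Assumption~\ref{TwiceContinuDiff} and smoothness of $R$ by joint continuity of $(x,\eta)\mapsto\Hess\hat f_x(\eta)$ near $(x^*,0_{x^*})$ (and the fact that the pullback and Riemannian Hessians coincide at a critical point), but you should say so explicitly, since the paper only assumes a quantitative version of this closeness for part 3.

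Two smaller points deserve care. First, in part 1 your expansion gives $h_k(1)-h_k(0)\approx\tfrac12 h_k'(0)$, so \eqref{RieNew-BrydCond2} holds at $\alpha_k=1$ only when $\chi_2\le 1/2$, and the fallback to \eqref{RieNew-BrydCond1} is not automatic: at the unit step it needs roughly $\chi_1\,\lambda_{\max}(\Hess f(x^*))\le 1/2$. The paper phrases acceptance through the Wolfe conditions (\cite[Proposition 5]{ring_optimization_2012}) and then uses that Wolfe-accepted steps satisfy \eqref{RieNew-BrydCond1} or \eqref{RieNew-BrydCond2} with the constants those conditions inherit; if you keep the direct verification you should either restrict the constants or adopt the same phrasing. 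Second, in your displayed decomposition the residual term should carry a plus sign, $\eta_k-\xi_k=\Hess f(x_k)^{-1}\bigl(\Hess f(x_k)[\eta_k]+\grad f(x_k)\bigr)+\Hess f(x_k)^{-1}(\bar{\mathcal H}_k-\Hess f(x_k))\xi_k$; this is harmless since only norms are used.
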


It is worth mentioning that in Theorem \ref{RieNew-OrderOfConv} the assumption ``$\|\mathrm{Hess}f(x_k) - \mathrm{Hess}\hat{f}_{x_k}(0_{x_k})\|\le \beta_1\|\mathrm{grad}f(x_k)\|$''~\cite[Theorem~7.4.10]{AbsMahSep2008} is reasonable since that for sufficiently large $k$, $x_k$ is sufficiently close to $x^*$ and the right-hand side can take zero at the stationary point $x^*$. On the other hand, if a second-order retraction is used, this assumption naturally holds since the right-hand side of the inequality can take up to zero~\cite[Proposition~5.5.5]{AbsMahSep2008}. The another assumption ``$\mathrm{Hess}\hat{f}_x$ is Lipschitz-continous at $0_x$ uniformly in $x$ in a neighborhood of $x^*$''~\cite[Theorem~7.4.10]{AbsMahSep2008} is the counterpart in the classical Newton-tCG method~\cite{DES82}.

\begin{proof}
	Since $\mathrm{Hess}f(x^*)$ is positive definite, there exists a neighborhood $\mathcal{U}_{x^*}$ of $x^*$ in $\mathcal{M}$ such that $\mathrm{Hess}f(x)$ is also positive definite for all $x\in \mathcal{U}_{x^*}$. From Theorem~\ref{RieNew-GlobalConver} it follows that there exists a positive constant integer $k_0$ such that $\mathrm{Hess}f(x_k)$ is positive definite for $k\ge k_0$. 
	By Algorithm \ref{RieNew-TrunConjGrad}, we have 
    \begin{align}
      \|r_k\|=\|\mathrm{Hess}f(x_k)[\eta_k]+\mathrm{grad}f(x_k)\|\le \|\mathrm{grad}f(x_k)\|\phi_k\le\|\mathrm{grad}f(x_k)\|^{1+t}. \label{RieNew-OrderOfConv-1}
    \end{align}
    By the trigonometric inequality of norm and (\ref{RieNew-OrderOfConv-1}), we have  
    \begin{align*}
      \|\mathrm{grad}f(x_k)\|-\|\mathrm{Hess}f(x_k)[\eta_k]\|\le\|\mathrm{grad}f(x_k)\|^{1+t}.
    \end{align*}
    It continues 
    \[
      (1-\|\mathrm{grad}f(x_k)\|^t)\|\mathrm{grad}f(x_k)\|\le\|\mathrm{Hess}f(x_k)[\eta_k]\|.
      \]
    By Assumption~\ref{TwiceContinuDiff}, there exist $L > 0,\tilde{L}>0$ such that $\|\mathrm{Hess}f(x_k)\|\le L$ and $\|\mathrm{Hess}f(x_k)^{-1}\|\le \tilde{L}$ for sufficiently large $k$ since $\mathrm{Hess}f(x^*)$ is positive definite. it follows that
    \[
      (1-\|\mathrm{grad}f(x_k)\|^t)\|\mathrm{grad}f(x_k)\|\le L\|\eta_k\|.
      \]
    By Theorem \ref{RieNew-GlobalConver}, for sufficiently large $k$, we have 
    $
      \frac{1}{2}\|\mathrm{grad}f(x_k)\|\le L\|\eta_k\|,
	$
    that is, 
    \begin{align}
      \|\mathrm{grad}f(x_k)\|\le 2L\|\eta_k\|. \label{RieNew-OrderOfConv-2}
    \end{align}
    Besides, noting that $\|r_k\|\le \|\mathrm{grad}f(x_k)\|$, we have 
    \begin{equation}    	
    \begin{aligned}
      \|\eta_k\| &= \|\mathrm{Hess}f(x_k)^{-1}[r_k-\mathrm{grad}f(x_k)]\|\le\tilde{L}(\|r_k\|+\|\mathrm{grad}f(x_k)\|)\\
      & \le 2\tilde{L}\|\mathrm{grad}f(x_k)\|.  \label{RieNew-OrderOfConv-3}
    \end{aligned}
    \end{equation}
    Thus, we have 
  	\begin{align*}  		
  		\frac{ \|\mathrm{grad}f(x_k)+ \mathrm{Hess}f(x_k)\eta_k \| }{\|\eta_k\|} &\le \frac{\|\mathrm{grad}f(x_k)\|^{1+t}}{\|\eta_k\|}\le {(2L)^{1+t}\|\eta_k\|^t} \\
  		& \le (2L)^{1+t}\gamma_2^t\|\mathrm{grad}f(x_k)\|^t,
  	\end{align*}
  	which gives that 
  	\begin{align}
  		\label{RieNew-OrderOfConv-4}
  		\lim_{k\rightarrow \infty}\frac{ \|\mathrm{grad}f(x_k)+ \mathrm{Hess}f(x_k)\eta_k \| }{\|\eta_k\|}=0.
  	\end{align}
  	By \cite[Proposition 5]{ring_optimization_2012}, (\ref{RieNew-OrderOfConv-4}) implies that for sufficiently large $k$, the step size $\alpha_k=1$ is acceptable for Wolfe conditions and thus is too for (\ref{RieNew-BrydCond1}) or (\ref{RieNew-BrydCond2}) under Assumptions~\ref{TwiceContinuDiff} and~\ref{RadLipCont}.   
  	In addition, the superlinear convergence rate is obtained by \cite[Proposition 8]{ring_optimization_2012}.
  	
%  	We next prove the second part. 
  	By \cite[Lemma 7.4.8]{AbsMahSep2008} and \cite[Lemma 7.4.9]{AbsMahSep2008}, there exist constants $c_0>0,c_1>0$ and $c_2>0$ such that 
  	\begin{align}
  		c_0\mathrm{dist}(x,x^*) \le \|\mathrm{grad}f(x)\| \le c_1 \mathrm{dist}(x,x^*),\;\forall\;x\in B_{\mu_1}(x^*), \label{RieNew-OrderOfConv-5}
  	\end{align}  
  	and 
  	\begin{align} \label{RieNew-OrderOfConv-6}
  		\|\mathrm{grad}f(R_x(\eta_x))\| \le c_2 \|\mathrm{grad}\hat{f}_x(\eta_x)\|,\;\forall\;x\in B_{\mu_1}(x^*),\;\forall\;\eta_{x}\in B_{\mu_2}(0_{x}).
  	\end{align}
  	From the Taylor's formula for $\hat{f}_{x_k}$ at $0_{x_k}$, for sufficiently large $k$, we have 
  	 \begin{align*} 
  	 \mathrm{grad}\hat{f}_{x_{k}}(\eta_{k})&= \mathrm{grad}\hat{f}_{x_{k}}(0_{x_{k}}) + \mathrm{Hess}\hat{f}_{x_{k}}(0_{x_{k}})[\eta_{k}] + O(\|\eta_{k}\|^{2}) \\ 
  	 	&= \mathrm{grad}f(x_{k}) + \mathrm{Hess}f(x_{k})[\eta_{x}] + (\mathrm{Hess}\hat{f}_{x_{k}}(0_{x_{k}})-\mathrm{Hess}f(x_{k}))[\eta_{k}] \\ 
  	 	& \quad + O(\|\eta_{k}\|^{2})  \\ 
  	 	&=  r_{k} + (\mathrm{Hess}\hat{f}_{x_{k}}(0_{x_{k}})-\mathrm{Hess}f(x_{k}))[\eta_{k}] + O(\|\eta_{k}\|^{2}).
  	 \end{align*} 
  	 It, together with (\ref{RieNew-OrderOfConv-1}), (\ref{RieNew-OrderOfConv-3}), (\ref{RieNew-OrderOfConv-5}) and (\ref{RieNew-OrderOfConv-6}), implies that 
  	  \begin{align*} 
  	  \|\mathrm{grad}f(x_{k+1})\| &\le c_{2}\|\mathrm{grad}\hat{f}_{x_{k}}(\eta_{k})\| \\ & \le c_{2}\|r_{k}\| + c_{2}\|\mathrm{Hess}\hat{f}_{x_{k}}(0_{x_{k}})-\mathrm{Hess}f(x_{k})\|\|\eta_{k}\| + c_{2}c_{3}\|\eta_{k}\|^{2} \\ & \le c_{2}\|r_{k}\| + c_{2}(\beta_{1}+c_{3})\|\eta_{k}\|^{2} \\ & \le c_{2}(\|\mathrm{grad}f(x_{k})\|^{t} + 4\tilde{L}^{2}(\beta_{1}+c_{3})\|\mathrm{grad}f(x_{k})\|)\|\mathrm{grad}f(x_{k})\| ,
  	  \end{align*} 
  	  where $c_3>0$ is a constant. Hence, we obtain that 
  	  \begin{equation} \nonumber 	  	
  	   \begin{aligned}   	   
  	   \frac{\mathrm{dist}(x_{k+1},x^{*})}{\mathrm{dist}(x_{k},x^{*})^{1+\min(1,t)}} & \le \frac{c_{1}^{1+\min(1,t)}}{c_{0}}\frac{\|\mathrm{grad}f(x_{k+1})\|}{\|\mathrm{grad}f(x_{k})\|^{1+\min(1,t)}} \\ & \le \frac{c_{2}c_{1}^{1+\min(1,t)}}{c_{0}}\bigg( \|\mathrm{grad}f(x_{k})\|^{t-\min(1,t)} \\ 
  	     & \quad + 4\tilde{L}^{2}(\beta_{1}+c_{3})\|\mathrm{grad}f(x_{k})\|^{1-\min(1,t)} \bigg)  \\ & \le C, 
  	   \end{aligned} 
  	  \end{equation}
  	  for a constant $C>0$, which completes the proof. %For sufficiently large $k$, the inequality above is equivalent to 
%  	  $ 
%  	  \|R_{x^{*}}^{-1}(x_{k+1})\|\le \tilde{C} \|R_{x^{*}}^{-1}(x_{k})\| 
%  	  $
%  	  with a positive constant $\tilde{C}$. We complete the proofs. 

\end{proof}

\section{Ingredients on Riemannian Quotient Manifold \texorpdfstring{$\mathbb{R}_*^{n\times p}/\mathcal{O}_p$}{} } \label{QuotMani}

In this section, the optimization tools of $\mathbb{R}_*^{n \times p}/\mathcal{O}_p$ are reviewed, including tangent space, horizontal space, vertical space, Riemannian metric, retraction, Riemannian gradient, and Riemannian Hessian. The detailed derivations can be found in~\cite{Zheng2022RiemannianOU}.

%The matrices set $\mathbb{R}_*^{n\times p}$ is a open submanifold of $\mathbb{R}^{n\times p}$, where $p\ll n$,  and the quotient space $\mathbb{R}_*^{n\times p}/\mathcal{O}_p$ is a quotient manifold of $\mathbb{R}_*^{n\times p}$ \cite{AbsMahSep2008,MFPR10}, therefore the relationship between them is very close. In fact, for any $Y \in \mathbb{R}_*^{n\times p}$, there exists a bijective $\mathrm{D}\pi(Y)$ from a subspace, so-called horizontal space, of $\mathrm{T}_Y\mathbb{R}_*^{n\times p}$ to the tangent space $\mathrm{T}_{\pi(Y)}\mathbb{R}_*^{n\times p}/\mathcal{O}_p$. Then for each tangent vector $\eta_{\pi(Y)}$, we can find the unique vector corresponding to it in the horizontal space. This makes implementation of algorithms on quotient manifold possible. 

%{\color{red}

%\subsection{Riemannian manifold \texorpdfstring{$\mathbb{R}_*^{n\times p}$}{}}

%The ingredients, such as Riemannian metric and retraction, on the total space $\mathbb{R}_*^{n\times p}$ can become the ones in the quotient manifold $\mathbb{R}_*^{n\times p}/\mathcal{O}_p$ under certain conditions, so we first give the related notions on $\mathbb{R}_*^{n\times p}$.

\paragraph{Vertical spaces.}
For any $Y \in \mathbb{R}_*^{n \times p}$, it has been shown in~~\cite[Proposition 3.4.4]{AbsMahSep2008} that the equivalence class $\pi^{-1}(\pi(Y))$ is an embedded submanifold $\mathbb{R}_*^{n \times p}$. The tangent space of $\pi^{-1}(\pi(Y))$ at $Y$ is called the vertical space of $\mathbb{R}_*^{n \times p}/\mathcal{O}_p$ at $Y$, denoted by $\mathrm{V}_Y$, and is given by 
\[
  \mathrm{V}_Y=\mathrm{T}_Y \pi^{-1}(\pi(Y))=\{Y\Omega: \Omega  \in \mathrm{Skew}(p)\} \subseteq \mathrm{T}_Y\mathbb{R}_*^{n\times p},
\]
  where %$\mathcal{F}=\{X\in \mathbb{R}_*^{n\times p}: X \sim Y\}$ is the fiber of $Y$ and 
  $\mathrm{Skew}(p) = \{A\in\mathbb{R}^{p\times p}: A^T=-A\}$. 
 
\paragraph{Riemannian metric on $\mathbb{R}_*^{n\times p}$.} The Riemannian metric on $\mathbb{R}_*^{n\times p}$ given in~\cite{Zheng2022RiemannianOU} is considered: 
\begin{equation} \label{Rmetric1}
g_Y(\eta_Y,\xi_Y) = 
	2\mathrm{tr}(Y^T\eta_YY^T\xi_Y+Y^TY\eta_Y^T\xi_Y) 
        +\mathrm{tr}\left(Y^TY(\eta_Y^{\mathrm{V}})^T(\xi_Y^{\mathrm{V}})\right), 
\end{equation}
where $\eta_Y,\xi_Y\in \mathrm{T}_Y\mathbb{R}_*^{n\times p} = \mathbb{R}^{n \times p}$, $\eta_Y^{\mathrm{V}}= Y \left( (Y^TY)^{-1}Y^T \eta_Y-\eta_Y^T Y(Y^TY)^{-1} \right) / 2$, $\xi_Y^{\mathrm{V}} = Y ( (Y^TY)^{-1}Y^T \xi_Y-$ $\xi_Y^T Y(Y^TY)^{-1} ) / 2$.
This Riemannian metric is essentially equivalent to the Euclidean metric on $\mathcal{S}_+(p,n)$, see details in~\cite{Zheng2022RiemannianOU}. We also take two other Riemannian metrics on $\mathbb{R}^{n\times p}_*$ into consideration, and the detialed discussion is refered to Appendix~\ref{Appendix}.

\paragraph{Horizontal spaces.} 
Given a Riemannian metric $g$ on the total space $\mathbb{R}_*^{n \times p}$, the orthogonal complement space in $\mathrm{T}_Y\mathbb{R}_*^{n\times p}$ of $\mathrm{V}_Y$ with respect to $g_Y$ is called the horizontal space at $Y$, denoted by $\mathrm{H}_Y$. The horizontal spaces with respect to the Riemannian metric in~\eqref{Rmetric1} are respectively given by
\begin{equation} \nonumber
	\mathrm{H}_Y = 
		\{YS+Y_\perp K:S^T=S,S\in\mathrm{R}^{p\times p},K\in\mathbb{R}^{(n-p)\times p}\} 
\end{equation}
By~\cite[Section~3.5.8]{AbsMahSep2008}, the mapping $\mathrm{D}\pi(Y)$ is a bijection from $\mathrm{H}_Y$ to $\mathrm{T}_{\pi(Y)}\mathbb{R}_*^{n\times p}/\mathcal{O}_p$, so for each $\eta_{\pi(Y)}\in \mathrm{T}_{\pi(Y)}\mathbb{R}_*^{n\times p}/\mathcal{O}_p$, there exists unique vector $\eta_{Y}\in \mathrm{H}_Y$ such that $\mathrm{D}\pi(Y)[\eta_{Y}]=\eta_{\pi(Y)}$. This $\eta_{Y}$ is called the horizontal lift of $\eta_{\pi(Y)}$ at $Y$, denoted by $\eta_{\uparrow_Y}$.

\paragraph{Projections onto Vertical Spaces and Horizontal Spaces.}
For any $Y\in \mathbb{R}_*^{n\times p}$ and $\eta_Y\in \mathrm{T}_Y\mathbb{R}_*^{n\times p}$, the orthogonal projections of $\eta_Y$ to $\mathrm{V}_Y$ and $\mathrm{H}_Y$ with respect to the Riemannian metric are given by
	$$
	\begin{aligned} 
	\mathcal{P}_Y^{\mathrm{V}}(\eta_Y) = Y\Omega, \text{ and } \mathcal{P}_Y^{\mathrm{H}}(\eta_Y)&=\eta_Y-\mathcal{P}_Y^{\mathrm{V}}(\eta_Y)=\eta_Y-Y\Omega \\ &= Y\left( \frac{(Y^TY)^{-1}Y^T\eta_Y+\eta_Y^TY(Y^TY)^{-1}}{2} \right) + Y_\perp Y_\perp^T\eta_Y, 
    \end{aligned}
	$$
	where $\Omega = \frac{(Y^TY)^{-1}Y^T\eta_Y-\eta_Y^TY(Y^TY)^{-1}}{2}$.

\paragraph{Riemannian metric on $\mathbb{R}_*^{n\times p}/\mathcal{O}_p$.} If for all $\xi_{\pi(Y)},\eta_{\pi(Y)} \in\mathrm{T}_{\pi(Y)}\mathbb{R}_*^{n\times p}/\mathcal{O}_p$, it holds that 
\begin{align} \label{IngredQuotMani-Metric-induced}
	Y\sim Z \Rightarrow g_Y(\xi_{\uparrow_Y},\eta_{\uparrow_Y})  = g_Z(\xi_{\uparrow_Z},\eta_{\uparrow_Z}), 
\end{align}
the Riemannian metric on $\mathbb{R}_*^{n\times p}/\mathcal{O}_p$ induced by $g$ is defined as 
$
	g_{\pi(Y)}(\xi_{\pi(Y)},\eta_{\pi(Y)}) = g_Y(\xi_{\uparrow_Y},\eta_{\uparrow_Y}).
$
Since the Riemannian metric $g_Y$ on $\mathbb{R}_*^{n\times p}$ satisfy~\eqref{IngredQuotMani-Metric-induced} by~\cite{Zheng2022RiemannianOU}, the induced Riemannian metric on $\mathbb{R}_*^{n\times p}/\mathcal{O}_p$ is given by:  
\begin{align} \label{IngredQuotMani-Metric1}
    g_{[Y]}(\xi_{[Y]},\eta_{[Y]})  = 2\mathrm{tr}(Y^T\xi_{\uparrow_Y}Y^T\eta_{\uparrow_Y}+Y^TY\xi_{\uparrow_Y}^T\eta_{\uparrow_Y}), 
\end{align}
for all $\xi_{\pi(Y)},\eta_{\pi(Y)}\in\mathrm{T}_{\pi(Y)}\mathbb{R}_*^{n\times p}/\mathcal{O}_p$.
In the following, with a slight abuse of notation, we use $g$ to denote the Riemannian metric on both $\mathbb{R}_*^{n\times p}$ and $\mathbb{R}_*^{n\times p}/\mathcal{O}_p$. 

\paragraph{Retraction.} We choose the retraction on $\mathbb{R}_*^{n\times p}/\mathcal{O}_p$ defined by
\begin{equation} \label{retraction}
   		R_{\pi(Y)}(\xi_{\pi(Y)}) = \pi(Y+ \xi_{\uparrow_Y}),
\end{equation}
where $Y \in \mathbb{R}_*^{n \times p}$ and $\xi_{\pi(Y)} \in \T_{\pi(Y)} \mathbb{R}_*^{n\times p}/\mathcal{O}_p$.  Retraction~\eqref{retraction} has been used in~\cite{MA20,Zheng2022RiemannianOU}.
%For any $\eta_Y \in \mathrm{T}_Y\mathbb{R}_*^{n\times p}$ and appropriate step size $\alpha>0$, $\bar{R}_Y(\alpha\eta_Y)=Y + \alpha\eta_Y$ is a retraction on $\mathbb{R}_*^{n\times p}$. For any vector field $\xi $ on $\mathbb{R}_*^{n\times p}/\mathcal{O}_p$, $Y \in \mathbb{R}_*^{n\times p}$ and $O\in\mathcal{O}_p$, it holds that $\xi_{\uparrow_{YO}}=\xi_{\uparrow_Y}O$ \cite[Prop.A.8]{Absil20}, which implies that 
%\[
%   		R_{\pi(Y)}(\alpha\xi_{\pi(Y)}) = \pi(Y+ \alpha \xi_{\uparrow_Y})
%\]
%defines a retraction on $\mathbb{R}_*^{n\times p}/\mathcal{O}_p$ with appropriate step size $\alpha>0$.

\paragraph{The horizontal lifts of Riemannian Gradients and the actions of Riemannian Hessians.} 
It follows from~\cite{Zheng2022RiemannianOU} that the Riemannian gradients and Riemannian Hessian of $f$ in~\eqref{Pro_stat-FinalProb} can be characterized by the Euclidean gradient and the Euclidean Hessian of $h$ in~\eqref{Pro_stat-OptProb_Bart}, see Proposition~\ref{IngredQuotMani-Prop-Gradient}.
\begin{proposition} \label{IngredQuotMani-Prop-Gradient}
  The Riemannian gradients of the smooth real-valued function $f$ in \eqref{Pro_stat-FinalProb} on $\mathbb{R}_*^{n\times p}/\mathcal{O}_p$ with respect to the Riemannian metric~\eqref{Rmetric1} is given by 
  \[
  (\mathrm{grad}f(\pi(Y)))_{\uparrow_Y} = \mathrm{grad}\bar{f}(Y) =
    \left( I - \frac{1}{2}P_Y \right)\nabla h(YY^T)Y(Y^TY)^{-1},
  \]
and the action of the Riemannian Hessian is given by
\begin{align} \label{IngredQuotMani-Hessian1}
(\mathrm{Hess}f(\pi(Y))[\eta_{\pi(Y)}])_{\uparrow_Y}= \left(I - \frac{1}{2}P_Y\right) \nabla^2h(YY^T)[Y\eta_{\uparrow_Y}^T+\eta_{\uparrow_Y}Y^T]Y(Y^TY)^{-1} + T_1,
\end{align}
where $\nabla^2h(YY^T)[V] = AVM + MVA$, $P_Y=Y(Y^TY)^{-1}Y^T$, $T_1 = (I-P_Y)\nabla h(YY^T)(I-P_Y)\eta_{\uparrow_Y}(Y^TY)^{-1}$.
\end{proposition}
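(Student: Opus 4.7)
The plan is to prove both formulas by lifting to the total space $\mathbb{R}_*^{n\times p}$ and exploiting the invariance of $\bar{f}(Y) := h(YY^T)$ under the group action. Since $\bar{f}(YQ) = \bar{f}(Y)$ for every $Q \in \mathcal{O}_p$, the Riemannian gradient of $\bar{f}$ with respect to the invariant metric \eqref{Rmetric1} is automatically horizontal, so by the standard quotient manifold theory one has $(\mathrm{grad}f(\pi(Y)))_{\uparrow_Y} = \mathrm{grad}\bar{f}(Y)$, and the Riemannian Hessian on the quotient is obtained by horizontally projecting the covariant derivative of this lift along a horizontal direction. Thus the task reduces to (i) identifying the explicit horizontal vector $G$ that represents $\mathrm{grad}\bar{f}(Y)$, and (ii) differentiating $G$ with respect to $Y$ and projecting.

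For the gradient, the chain rule applied to $\bar{f}(Y) = h(YY^T)$ together with the symmetry of $\nabla h$ gives $D\bar{f}(Y)[\xi_Y] = 2\mathrm{tr}(Y^T \nabla h(YY^T)\xi_Y)$. I would then verify the proposed expression $G = (I - \tfrac{1}{2}P_Y)\nabla h(YY^T)Y(Y^TY)^{-1}$ directly. First, a quick computation shows $Y^TG = \tfrac{1}{2} Y^T \nabla h(YY^T) Y(Y^TY)^{-1}$, so $(Y^TY)^{-1}Y^TG$ is symmetric by symmetry of $\nabla h(YY^T)$, placing $G$ in the horizontal space $\{YS + Y_\perp K : S^T = S\}$. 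Second, for any horizontal $\xi_Y$ the last (vertical-dependent) summand of the metric \eqref{Rmetric1} vanishes, and plugging $Y^TG$ and $Y^TY\,G^T = Y^T\nabla h(YY^T)(I-\tfrac{1}{2}P_Y)$ into $g_Y(G,\xi_Y) = 2\mathrm{tr}(Y^TGY^T\xi_Y + Y^TY\,G^T\xi_Y)$ produces a clean cancellation of the $P_Y$-terms, leaving exactly $2\mathrm{tr}(Y^T\nabla h(YY^T)\xi_Y)$. Since both sides of the defining equation vanish on vertical vectors (by invariance of $\bar{f}$ and horizontality of $G$), the identity extends to all $\xi_Y$, establishing the gradient formula.

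For the Hessian, the strategy is to apply the quotient-manifold identity $(\mathrm{Hess}f(\pi(Y))[\eta_{\pi(Y)}])_{\uparrow_Y} = \mathcal{P}_Y^{\mathrm{H}}(\bar\nabla_{\eta_{\uparrow_Y}}\mathrm{grad}\bar{f})$, where $\bar\nabla$ is the Levi-Civita connection of the metric \eqref{Rmetric1}. I would expand the directional derivative of $G$ using the product rule, treating the four factors $(I - \tfrac{1}{2}P_Y)$, $\nabla h(YY^T)$, $Y$, and $(Y^TY)^{-1}$ separately; the piece coming from differentiating $\nabla h(YY^T)$ is $\nabla^2 h(YY^T)[Y\eta_{\uparrow_Y}^T + \eta_{\uparrow_Y} Y^T]$, which after multiplication by the outer factors yields the first term of \eqref{IngredQuotMani-Hessian1}. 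The remaining contributions — from $DP_Y[\eta_{\uparrow_Y}]$, the explicit $\eta_{\uparrow_Y}$ in place of $Y$, and $D(Y^TY)^{-1}[\eta_{\uparrow_Y}] = -(Y^TY)^{-1}(Y^T\eta_{\uparrow_Y} + \eta_{\uparrow_Y}^TY)(Y^TY)^{-1}$ — must then be combined with the Christoffel correction of $\bar\nabla$ relative to the Euclidean connection and run through the horizontal projector $\mathcal{P}_Y^{\mathrm{H}}$. The main obstacle is a bookkeeping one: showing that after all projections and connection corrections, the many surviving pieces collapse into the single term $T_1 = (I-P_Y)\nabla h(YY^T)(I-P_Y)\eta_{\uparrow_Y}(Y^TY)^{-1}$. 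The cancellation is driven by two structural facts: the horizontality of $\mathrm{grad}\bar{f}(Y)$ kills all contributions lying in the $Y$-range that would otherwise appear, and the specific combination of the $(I-\tfrac{1}{2}P_Y)$ factor with the non-Euclidean connection coefficients of \eqref{Rmetric1} exactly annihilates the $P_Y$-part of $\nabla h(YY^T)\eta_{\uparrow_Y}(Y^TY)^{-1}$, leaving only its $(I-P_Y)(\cdot)(I-P_Y)$ component. With this cancellation verified, the formula \eqref{IngredQuotMani-Hessian1} follows, completing the proof by invoking the detailed computations of \cite{Zheng2022RiemannianOU} to justify the intermediate Christoffel identities.
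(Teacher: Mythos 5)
Your gradient argument is sound and complete in outline: $G=(I-\tfrac12 P_Y)\nabla h(YY^T)Y(Y^TY)^{-1}$ satisfies $(Y^TY)^{-1}Y^TG=\tfrac12 (Y^TY)^{-1}Y^T\nabla h(YY^T)Y(Y^TY)^{-1}$, which is symmetric, so $G\in\mathrm{H}_Y$; for horizontal $\xi_Y$ the vertical term of the metric drops and $2\mathrm{tr}(Y^TGY^T\xi_Y+Y^TY\,G^T\xi_Y)=2\mathrm{tr}(Y^T\nabla h(YY^T)\xi_Y)=D\bar f(Y)[\xi_Y]$, and invariance of $\bar f$ handles the vertical directions. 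Note that the paper itself does not prove this proposition; it is quoted from \cite{Zheng2022RiemannianOU}, where the derivation exploits the remark made after \eqref{Rmetric1}: the metric is chosen so that $\pi(Y)\mapsto YY^T$ is (essentially) an isometry onto $\mathcal{S}_+(p,n)$ with the Euclidean metric, so gradient and Hessian can be computed with the embedded-submanifold formulas on $\mathcal{S}_+(p,n)$ and pulled back, rather than by a direct connection computation on $\mathbb{R}_*^{n\times p}$.

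The gap is in your Hessian step. The identity $(\mathrm{Hess}f(\pi(Y))[\eta_{\pi(Y)}])_{\uparrow_Y}=\mathcal{P}_Y^{\mathrm{H}}\bigl(\bar\nabla_{\eta_{\uparrow_Y}}\mathrm{grad}\bar f\bigr)$ is the right starting point, but everything difficult is hidden in $\bar\nabla$: the metric \eqref{Rmetric1} depends on $Y$, so its Levi-Civita connection differs from the Euclidean one by nontrivial Christoffel terms, and those terms are exactly what must combine with the derivatives of $P_Y$, of the explicit $Y$, and of $(Y^TY)^{-1}$ to leave only $T_1=(I-P_Y)\nabla h(YY^T)(I-P_Y)\eta_{\uparrow_Y}(Y^TY)^{-1}$. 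You assert this collapse ("the specific combination of the $(I-\tfrac12 P_Y)$ factor with the non-Euclidean connection coefficients exactly annihilates the $P_Y$-part") but never exhibit the connection coefficients or the cancellation, and the heuristic offered does not substitute for it: away from a critical point $\mathrm{grad}\bar f(Y)\neq 0$, its directional derivative has components in every direction, and which of them survive the horizontal projection depends precisely on the Christoffel symbols you defer to the reference. As written, the Hessian formula \eqref{IngredQuotMani-Hessian1} is therefore not proved but re-cited; to close the gap you should either compute the Levi-Civita connection of \eqref{Rmetric1} via the Koszul formula and carry the bookkeeping through, or, more economically, use the isometry with $(\mathcal{S}_+(p,n),\langle\cdot,\cdot\rangle_F)$ so that the Hessian becomes a projected Euclidean second derivative plus the standard curvature correction of that embedded manifold, from which $T_1$ emerges explicitly.
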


\section{Preconditioning and Increasing Rank Algorithm} \label{FinalAlgPrecond}

\subsection{Preconditioning} \label{sec:IncRankAlgPrecond:Precond}

%For Newton's method, e.g., Algorithm \ref{RieNew-TruncatedNewton}, most of the cost is concentrated on solving Newton equations. In large-scale setting, it can be solve with iterative methods, say Algorithm \ref{RieNew-TrunConjGrad}. Using iterative methods, we will naturally find ways to reduce the number of iterations in inner iterations. The reason why we choose CG method to solve Newton equation is that CG method is especially suitable for preconditioning. A good preconditioner will significantly reduce the number of iterations for solving Newton equations. We, in this section, will work to derive such preconditioners.

When the condition number of the Riemannian Hessian of $f$ at the minimizer $x^*$ is large and the sequence $\{x_k\}$ generated by Algorithm~\ref{RieNew-TruncatedNewton} converges to $x^*$, the number of iterations in the conjugate gradient method (Algorithm~\ref{RieNew-TrunConjGrad}) can be large. To improve the efficiency, the preconditioned conjugate gradient method given in~\cite{NW06} is used. In this section, we derive preconditioners that approximate the inverse of the Riemannian Hessian with respect to Riemannian metric~\eqref{IngredQuotMani-Metric1}. 

%\subsubsection{Preconditioning under Riemannian Metric~\eqref{IngredQuotMani-Metric1}} 

The Newton direction is given by solving the Newton equation, i.e.,
\begin{gather*}
\hbox{ find } \xi_{\pi(Y)} \in \T_{\pi(Y)} \mathbb{R}_*^{n \times p} / \mathcal{O}_p \\
\hbox{ such that } \Hess f( \pi(Y) ) [\xi_{\pi(Y)}] = - \grad f(\pi(Y)).
\end{gather*}
Therefore, when the Riemannian metric~\eqref{IngredQuotMani-Metric1} is used, Algorithm~\ref{RieNew-TruncatedNewton} needs to approximately solve 
\begin{equation} \label{e01}
  \left( I-\frac{1}{2}P_Y\right)\nabla^2h(YY^T)[Y\xi_{\uparrow_Y}^T+\xi_{\uparrow_Y}Y^T]Y(Y^TY)^{-1} + (I-P_Y)\nabla h(YY^T)(I-P_Y)\xi_{\uparrow_Y}(Y^TY)^{-1} =\eta_{\uparrow_Y},
\end{equation}
for $\xi_{\uparrow_Y}\in\mathrm{H}_Y$, where $\eta_{\uparrow_Y}\in \mathrm{H}_Y$ denotes the horizontal lift of $-\grad f(\pi(Y))$.

%The preconditioner that we proposed aims to solve
%\begin{align} \label{Precond-Metric1-ApproxNewEqu}
%  \left( I - \frac{1}{2}P_Y \right)\nabla^2h(YY^T)[Y\xi_{\uparrow_Y}^T+\xi_{\uparrow_Y}Y^T]Y(Y^TY)^{-1}=\eta_{\uparrow_Y},
%\end{align}
%by omitting the second term in~\eqref{e01}, 
The preconditioner that we proposed is given by
\begin{align*}
	\mathscr{P}(\pi(Y))  : \;& \mathrm{T}_{\pi(Y)}(\mathbb{R}_*^{n\times p}/\mathcal{O}_p) \rightarrow \mathrm{T}_{\pi(Y)}(\mathbb{R}_*^{n\times p}/\mathcal{O}_p)\\ 
	& \eta_{\pi(Y)} \mapsto \mathscr{P}(\pi(Y))[\eta_{\pi}(Y)] = \xi_{\pi(Y)},
\end{align*}
where $\xi_{\pi(Y)}$ and $\eta_{\pi(Y)}$ satisfies the following
\begin{align} \label{Precond-Metric1-ApproxNewEqu}
  \left( I - \frac{1}{2}P_Y \right)\nabla^2h(YY^T)[Y\xi_{\uparrow_Y}^T+\xi_{\uparrow_Y}Y^T]Y(Y^TY)^{-1}=\eta_{\uparrow_Y}.
\end{align}
%\[
%(\mathscr{P}(\pi(Y))[\xi_{\pi(Y)}])_{\uparrow_Y} = \left( I - \frac{1}{2}P_Y \right)\nabla^2h(YY^T)[Y\xi_{\uparrow_Y}^T+\xi_{\uparrow_Y}Y^T]Y(Y^TY)^{-1}.
%\]
Therefore, applying the preconditioner aims to solve~\eqref{Precond-Metric1-ApproxNewEqu}
by omitting the second term in~\eqref{e01}.
Such an approximation is reasonable since (i) the second term is approximately zero if $Y_k Y_k^T \approx X^*$ and (ii) our numerical experiments show that this preconditioner effectively reduces the number of inner iterations.

\paragraph{Positive definiteness of the preconditioner.} 
We claim that the proposed preconditioner satisfies Assumption~\ref{ass:precond}, which is formally stated in Theorem~\ref{thm:PosDefPre}. 
\begin{theorem} \label{thm:PosDefPre}
	Assume $A$ and $M$ are positive definite and let $\mathcal{L}=A\otimes M+A\otimes M$. Then for any $\pi(Y)$, the proposed preconditioner $\mathscr{P}(\pi(Y))$ is self-adjoint positive definite and it satisfies Assumption~\ref{ass:precond} with parameters $p_1=1/\lambda_{\max}(\mathcal{L})$ and $p_2=1/\lambda_{\min}(\mathcal{L})$. 
\end{theorem}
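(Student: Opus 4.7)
Let $\mathcal{A}$ denote the inverse of $\mathscr{P}$, i.e., the operator on $\mathrm{H}_Y$ (identified with $\mathrm{T}_{\pi(Y)}\mathbb{R}_*^{n\times p}/\mathcal{O}_p$ via horizontal lifts) defined by $\mathcal{A}\xi = \left(I-\tfrac{1}{2}P_Y\right)\nabla^2 h(YY^T)[Y\xi^T+\xi Y^T]Y(Y^TY)^{-1}$. The plan is to factor $\mathcal{A}$ as $B^{\dagger}\mathcal{H} B$, where $B\colon \mathrm{H}_Y\to\mathcal{S}_n^{\mathrm{sym}}$ is $B\xi = Y\xi^T+\xi Y^T$, $\mathcal{H}V=AVM+MVA$ is the Euclidean Hessian of $h$, and $B^{\dagger}$ is the adjoint of $B$ with respect to the Riemannian metric $g$ on $\mathrm{H}_Y$ and the Frobenius inner product on $\mathcal{S}_n^{\mathrm{sym}}$. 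The adjoint identity I would verify by direct computation: writing $\xi=YS+Y_\perp K$ with $S^T=S$, one checks that $\mathcal{A}\xi=\tfrac{1}{2}Y(Y^TY)^{-1}Y^TZY(Y^TY)^{-1}+Y_\perp Y_\perp^TZY(Y^TY)^{-1}$ (so $\mathcal{A}\xi\in\mathrm{H}_Y$), and then shows $g(\xi,\mathcal{A}\eta)=2\mathrm{tr}(\xi^T\mathcal{H}(B\eta)Y)=\langle B\xi,\mathcal{H}B\eta\rangle_F$ by expanding $g$ using $Y^T(\mathcal{A}\eta)=\tfrac{1}{2}Y^TZY(Y^TY)^{-1}$ and invoking the identities $P_Y\xi=YS$, $Y_\perp^T\xi=K$.

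Once the factorization $\mathcal{A}=B^{\dagger}\mathcal{H} B$ is established, self-adjointness is immediate: $g(\xi,\mathcal{A}\eta)=\langle B\xi,\mathcal{H}B\eta\rangle_F=\langle \mathcal{H}B\xi,B\eta\rangle_F=g(\mathcal{A}\xi,\eta)$ since $\mathcal{H}$ is Frobenius–self-adjoint. Positive definiteness then reduces to two facts: (i) $\mathcal{H}$ is positive definite on $\mathcal{S}_n^{\mathrm{sym}}$, because $\langle V,\mathcal{H}V\rangle_F=2\|A^{1/2}VM^{1/2}\|_F^2>0$ for $V\neq 0$ (with Kronecker representation $\mathcal{L}$); and (ii) $B$ is injective on $\mathrm{H}_Y$, since $B\xi=0$ forces $Y_\perp^T(B\xi)Y(Y^TY)^{-1}=K=0$ and then $2YSY^T=0$ gives $S=0$. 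Together these yield $g(\xi,\mathcal{A}\xi)=\langle B\xi,\mathcal{H}B\xi\rangle_F>0$ for $\xi\neq 0$, so $\mathcal{A}$, and hence $\mathscr{P}=\mathcal{A}^{-1}$, is self-adjoint positive definite.

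The eigenvalue bounds come from a crucial isometry: $\|B\xi\|_F^2=g(\xi,\xi)$. Expanding $\|Y\xi^T+\xi Y^T\|_F^2$ by the trace gives $2\mathrm{tr}((Y^T\xi)^2)+2\mathrm{tr}(Y^TY\xi^T\xi)$, which matches the definition~\eqref{IngredQuotMani-Metric1} exactly. Combined with $\lambda_{\min}(\mathcal{L})\|V\|_F^2\le\langle V,\mathcal{H}V\rangle_F\le\lambda_{\max}(\mathcal{L})\|V\|_F^2$ for $V\in\mathcal{S}_n^{\mathrm{sym}}$, this gives
\[
\lambda_{\min}(\mathcal{L})\,g(\xi,\xi)\le g(\xi,\mathcal{A}\xi)\le\lambda_{\max}(\mathcal{L})\,g(\xi,\xi),
\]
so $\lambda_{\min}(\mathcal{A})\ge\lambda_{\min}(\mathcal{L})$ and $\lambda_{\max}(\mathcal{A})\le\lambda_{\max}(\mathcal{L})$, uniformly in $Y$. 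Taking the inverse yields $p_1=1/\lambda_{\max}(\mathcal{L})\le\lambda_{\min}(\mathscr{P}(\pi(Y)))\le\lambda_{\max}(\mathscr{P}(\pi(Y)))\le 1/\lambda_{\min}(\mathcal{L})=p_2$, as required.

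The main obstacle is the bookkeeping for step one, namely verifying $\mathcal{A}=B^{\dagger}\mathcal{H} B$: the adjoint of $B$ with respect to $g$ (rather than Frobenius) involves the metric-dependent correction through $P_Y$ and $(Y^TY)^{-1}$, and one has to trace through the expansion of $g(\xi,\mathcal{A}\eta)$ carefully to cancel the term arising from the $-\tfrac{1}{2}P_Y$ projector against the $Y^TY$ factor in $g$. The isometry $\|B\xi\|_F^2=g(\xi,\xi)$ is the reason the bounds depend only on $\mathcal{L}$ and not on $Y$, and it reflects how the metric~\eqref{Rmetric1} is adapted to the quotient geometry.
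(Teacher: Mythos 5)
Your proposal is correct and follows essentially the same route as the paper: your identity $g(\xi,\mathscr{P}^{-1}\eta)=\langle B\xi,\mathcal{H}B\eta\rangle_F$ and the isometry $\|B\xi\|_F^2=g(\xi,\xi)$ are exactly the paper's key computations (its self-adjointness expansion and the identities relating $\mathrm{tr}(F\nabla^2h(YY^T)[F])$ and $\mathrm{tr}(F^TF)$ to the metric, with $F=Y\xi^T+\xi Y^T$), and the eigenvalue bounds then follow from the same Rayleigh-quotient argument with $\mathcal{L}=A\otimes M+M\otimes A$. Packaging this as the factorization $\mathscr{P}^{-1}=B^{\dagger}\mathcal{H}B$ is a tidy reformulation rather than a different proof.
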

\begin{proof}
In what follows, 
we focus on the self-adjoint positive definiteness of $\mathscr{P}(\pi(Y))^{-1}$, since the self-adjoint positive definiteness of $\mathscr{P}(\pi(Y))$ is the same as that of $\mathscr{P}(\pi(Y))^{-1}$. 
First consider the self-adjointness. From~\eqref{Precond-Metric1-ApproxNewEqu}, we have 
\[
	(\mathscr{P}(\pi(Y))^{-1}[\xi_{\pi(Y)}])_{\uparrow_Y} =  \left( I - \frac{1}{2}P_Y \right)\nabla^2h(YY^T)[Y\xi_{\uparrow_Y}^T+\xi_{\uparrow_Y}Y^T]Y(Y^TY)^{-1},
\]
where we assume the invertibility of $\mathscr{P}$ and its rigorous proof is shown later.
For any $\xi_{\pi(Y)}$ and $\eta_{\pi(Y)}\in \mathrm{T}_{\pi(Y)} \mathbb{R}_*^{n\times p}/\mathcal{O}_p$ whose corresponding horizontal lifts are denoted by $\xi_{\uparrow_Y}$ and $\eta_{\uparrow_Y} \in \mathrm{H}_Y$, we have 
\begin{align*}
&g_{\pi(Y)} (\mathscr{P}(\pi(Y))^{-1}[{\xi}_{\pi(Y)}], {\eta}_{\pi(Y)}) = g_Y((\mathscr{P}(\pi(Y))^{-1}[{\xi}_{\pi(Y)}])_{\uparrow_Y},{\eta}_{\uparrow_Y}) \\
	&= 2\mathrm{tr}(Y^T(I-\frac{1}{2}P_Y)\nabla^2h(YY^T)[Y\xi_{\uparrow_Y}^T+\xi_{\uparrow_Y}Y^T]Y(Y^TY)^{-1}Y^T\eta_{\uparrow_Y}) \\ 
	& \quad + 2\mathrm{tr}(Y^TY((I-\frac{1}{2}P_Y)\nabla^2h(YY^T)[Y\xi_{\uparrow_Y}^T+\xi_{\uparrow_Y}Y^T]Y(Y^TY)^{-1})^T\eta_{\uparrow_Y}) \\ 
	&= 2\mathrm{tr}(\frac{1}{2}Y^T\nabla^2h(YY^T)[Y\xi_{\uparrow_Y}^T+\xi_{\uparrow_Y}Y^T]P_Y\eta_{\uparrow_Y})  \\ 
	& \quad + 2\mathrm{tr}(Y^T\nabla^2h(YY^T)[Y\xi_{\uparrow_Y}^T+\xi_{\uparrow_Y}Y^T](I-\frac{1}{2}P_Y)\eta_{\uparrow_Y}) \\ 
	&= 2\mathrm{tr}(Y^T\nabla^2h(YY^T)[Y\xi_{\uparrow_Y}^T+\xi_{\uparrow_Y}Y^T]\eta_{\uparrow_Y}) \\ 
	&= 2\mathrm{tr}(Y^T\nabla^2h(YY^T)[Y\eta_{\uparrow_Y}^T+\eta_{\uparrow_Y}Y^T]\xi_{\uparrow_Y}) \\ 
	&= g_Y(\xi_{\uparrow_Y},((\mathscr{P}(\pi(Y))^{-1}[{\eta}_{\pi(Y)}])_{\uparrow_Y})) \\ 
	&= g_{\pi(Y)}(\xi_{\pi(Y)}, \mathscr{P}(\pi(Y))^{-1}[{\eta}_{\pi(Y)}]),
\end{align*}
where the third to the last equation is due to that
\begin{align*}
	&\mathrm{tr}(Y^T\nabla^2h(YY^T)[Y\xi_{\uparrow_Y}^T+\xi_{\uparrow_Y}Y^T]\eta_{\uparrow_Y}) \\
	&= \mathrm{tr}(\eta_{\uparrow_Y}Y^{T}AY\xi_{\uparrow_Y}^{T} M)+\mathrm{tr}(\eta_{\uparrow_Y}Y^{T}MY\xi_{\uparrow_Y}^{T}A) + \mathrm{tr}(\eta_{\uparrow_Y}Y^{T}A\xi_{\uparrow_Y} Y^{T}M ) + \mathrm{tr}(\eta_{\uparrow_Y}Y^{T}M\xi_{\uparrow_Y} Y^{T}A) \\ 
	&= \mathrm{tr}(Y\xi_{\uparrow_Y}^{T} M\eta_{\uparrow_Y}Y^{T}A)+\mathrm{tr}(Y\xi_{\uparrow_Y}^{T}A\eta_{\uparrow_Y}Y^{T}M) + \mathrm{tr}(\xi_{\uparrow_Y}Y^TM\eta_{\uparrow_Y}Y^TA) + \mathrm{tr}(\xi_{\uparrow_Y} Y^{T}A\eta_{\uparrow_Y}Y^{T}M)\\
	&= \mathrm{tr}(Y\xi_{\uparrow_Y}\nabla^2h(YY^T)[\eta_{\uparrow_Y}Y^T]) + \mathrm{tr}(\xi_{\uparrow_Y}Y^T\nabla^2h(YY^T)[YY^T][\eta_{\uparrow_Y}Y^T]) \\ 
	&= \mathrm{tr}(Y \nabla^2h(YY^T)[Y\eta_{\uparrow_Y}^T+\eta_{\uparrow_Y}Y^T]\xi_{\uparrow_Y}).
\end{align*} 
Consequently the proposed preconditioner $\mathscr{P}(\pi(Y))^{-1}$ is self-adjoint.

Let $\lambda$ and $\xi_{\pi(Y)}$ (whose horizontal lift is ${\xi}_{\uparrow_Y}$) be an arbitrary eigenpair of $\mathscr{P}(\pi(Y))$. Then we have 
\begin{align*}
g_{\pi(Y)}(\mathscr{P}(\pi(Y))^{-1}[{\xi}_{\pi(Y)}], {\xi}_{\pi(Y)}) &= \lambda g_{\pi(Y)}(\xi_{\pi(Y)},\xi_{\pi(Y)}),
\end{align*}
which is equivalent to 
\begin{align*}
g_{Y}((\mathscr{P}(\pi(Y))^{-1}[{\xi}_{\pi(Y)}])_{\uparrow_Y}, {\xi}_{\uparrow_Y}) &= \lambda g_{Y}(\xi_{\uparrow_Y},\xi_{\uparrow_Y}).
\end{align*}
The left-hand side equals to 
\begin{equation}	
\begin{aligned} \label{pre:01}
	\mathrm{left} &= 2\mathrm{tr}(Y^T\nabla^2h(YY^T)[Y\xi_{\uparrow_Y}^T+\xi_{\uparrow_Y}Y^T]\xi_{\uparrow_Y}) \\ 
	& = 2\mathrm{tr}(Y^{T}AY\xi_{\uparrow_Y}^{T} M\xi_{\uparrow_Y})+ 2\mathrm{tr}(Y^{T}MY\xi_{\uparrow_Y}^{T}A\xi_{\uparrow_Y}) + 2\mathrm{tr}(Y^{T}A\xi_{\uparrow_Y} Y^{T}M\xi_{\uparrow_Y} ) + 2\mathrm{tr}(Y^{T}M\xi_{\uparrow_Y} Y^{T}A\xi_{\uparrow_Y}).
\end{aligned}
\end{equation}
Let $F=Y\xi_{\uparrow_Y}^T+\xi_{\uparrow_Y}Y^T$. We have 
\[
	\mathrm{tr}(F \nabla^{2}h(YY^{T})[F]) = \mathrm{vec}(F)^{T} \mathcal{L} \mathrm{vec}(F) \ge 0.
\]
The equation holds if and only if $F=0$, because $\mathcal{L}=A\otimes M+M\otimes A$ is positive definite~\cite{Bart10}. On the other hand, we note that 
\begin{equation}	
\begin{aligned} \label{pre:02}
&\mathrm{tr}(F\nabla^{2}h(YY^{T})[F]) \\
&= \mathrm{tr}(\xi_{\uparrow_Y} Y^{T} \nabla^{2}h(YY^{T})[Y\xi_{\uparrow_Y}^{T}+\xi_{\uparrow_Y} Y^{T}]) + \mathrm{tr}(Y\xi_{\uparrow_Y}^{T}\nabla^{2}h(YY^{T})[Y\xi_{\uparrow_Y}^{T}+\xi_{\uparrow_Y} Y^{T}]) \\ &= \mathrm{tr}(Y^{T}AY\xi_{\uparrow_Y}^{T} M\xi_{\uparrow_Y})+\mathrm{tr}(Y^{T}MY\xi_{\uparrow_Y}^{T}A\xi_{\uparrow_Y}) + \mathrm{tr}(Y^{T}A\xi_{\uparrow_Y} Y^{T}M\xi_{\uparrow_Y} ) + \mathrm{tr}(Y^{T}M\xi_{\uparrow_Y} Y^{T}A\xi_{\uparrow_Y})  \\ & \quad +  \mathrm{tr}(\xi_{\uparrow_Y}^{T} A\xi_{\uparrow_Y} Y^{T}MY)+\mathrm{tr}(\xi_{\uparrow_Y}^{T}M\xi_{\uparrow_Y} Y^{T}AY)+\mathrm{tr}(\xi_{\uparrow_Y}^{T}AY\xi_{\uparrow_Y}^{T}MY)+\mathrm{tr}(\xi_{\uparrow_Y}^{T}MY\xi_{\uparrow_Y}^{T}AY) \\ &= g_{Y}((\mathscr{P}(\pi(Y))^{-1}[\xi_{\pi(Y)}])_{\uparrow_Y},[\xi_{\uparrow_Y}]), 
\end{aligned} 
\end{equation}
and 
\begin{equation} \label{pre:03}
\begin{aligned} 
\mathrm{vec}(F)^{T}\mathrm{vec}(F) &= \mathrm{tr}(F^{T}F) \\ &= \mathrm{tr}((Y\xi_{\uparrow_Y}^{T}+\xi_{\uparrow_Y} Y^{T})(Y\xi_{\uparrow_Y}^{T}+\xi_{\uparrow_Y} Y^{T})) \\ &= \mathrm{tr}(Y\xi_{\uparrow_Y}^{T}Y\xi_{\uparrow_Y}^{T} + \xi_{\uparrow_Y} Y^{T}Y\xi_{\uparrow_Y}^{T} + Y\xi_{\uparrow_Y}^{T}\xi_{\uparrow_Y} Y^{T}+\xi_{\uparrow_Y} Y^{T}\xi_{\uparrow_Y} Y^{T}) \\ &= 2\mathrm{tr}(Y^{T}\xi_{\uparrow_Y} Y^{T}\xi_{\uparrow_Y} + Y^{T}Y\xi_{\uparrow_Y}^{T}\xi_{\uparrow_Y}) \\ &= g_{Y}(\xi_{\uparrow_Y},\xi_{\uparrow_Y}).  
\end{aligned} 
\end{equation}
Hence by~\eqref{pre:01},~\eqref{pre:02}, and~\eqref{pre:03}, we have 
\begin{align*} \lambda &= \frac{g_{Y}((\mathscr{P}(\pi(Y))^{-1}[\xi_{\pi(Y)}])_{\uparrow_Y},\xi_{\uparrow_Y})}{g_{Y}(\xi_{\uparrow_Y},\xi_{\uparrow_Y})} = \frac{\mathrm{tr}(F\nabla^{2}h(YY^{T})[F])}{\mathrm{tr}(F^{T}F)} \\ &= \frac{\mathrm{vec}(F)^{T}\mathcal{L} \mathrm{vec}(F)}{\mathrm{vec}(F)^{T} \mathrm{vec}(F)},	
\end{align*}
which implies that 
\begin{align} \label{pre:04}	
0<\lambda_{\min}(\mathcal{L}) \le \lambda(\mathscr{P}(\pi(Y))^{-1}) \le \lambda_{\max}(\mathcal{L}) .
\end{align}
Therefore, Assumption~\ref{ass:precond} holds with $p_1=1/\lambda_{\max}(\mathcal{L})$ and $p_2=1/\lambda_{\min}(\mathcal{L})$. It follows that $\mathscr{P}(\pi(Y))$ is invertible.
\end{proof}

\paragraph{Positive definiteness of the Hessian.} It is natural to characterize the positive definiteness of the Hessian of Problem~\eqref{Pro_stat-FinalProb}, which is stated in Theorem~\ref{thm:PosDefHess}, after analysing the positive definiteness of the proposed preconditioner. 

\begin{theorem} \label{thm:PosDefHess}
	For all point $\pi(Y) \in \mathbb{R}_*^{n\times p}/\mathcal{O}_p$, if $2\lambda_{\min}(Y^TY)\lambda_{\min}(A)\lambda_{\min}(M)>p\mathcal{R}_Y$, where $\mathcal{R}_Y:=\|AYY^TM+MYY^T-C\|_F$ is the residual of Equation~\eqref{Intro-LyapMatEqu} at $Y Y^T$, then the Hessian of $f$ in Problem~\eqref{Pro_stat-FinalProb} is self-adjoint positive definite at $\pi(Y)$. Moreover, if $\pi(Y^*)$ is a critical point of $f$, then $\mathrm{Hess}f(\pi(Y^*))$ is self-adjoint positive definite if $2\lambda_{min}((Y^*)^TY^*)\lambda_{\min}(A)\lambda_{\min}(M)>\mathcal{R}_{Y^*}$. 
\end{theorem}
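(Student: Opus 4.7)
The plan is to show strict positivity of $g_{\pi(Y)}(\mathrm{Hess}f(\pi(Y))[\eta_{\pi(Y)}],\eta_{\pi(Y)})$ for every nonzero tangent vector $\eta_{\pi(Y)}$; self-adjointness will emerge as a byproduct. Writing $\eta := \eta_{\uparrow_Y}$ with its horizontal decomposition $\eta = YS + Y_\perp K$ ($S = S^T$), the Hessian formula~\eqref{IngredQuotMani-Hessian1} splits the quadratic form as $\mathcal{T}_1 + \mathcal{T}_2$, where $\mathcal{T}_1$ comes from the first (preconditioner-like) summand and $\mathcal{T}_2 = g_Y(T_1, \eta)$ from the curvature term. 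For $\mathcal{T}_1$, I would reuse the trace identity already derived in the proof of Theorem~\ref{thm:PosDefPre}: with $F := Y\eta^T + \eta Y^T$, the manipulations from~\eqref{pre:01}--\eqref{pre:02} yield
\[
	\mathcal{T}_1 = \mathrm{tr}\bigl(F\,\nabla^2 h(YY^T)[F]\bigr) = \mathrm{vec}(F)^T\mathcal{L}\,\mathrm{vec}(F) \ge 2\lambda_{\min}(A)\lambda_{\min}(M)\|F\|_F^2,
\]
using $v^T(A\otimes M)v \ge \lambda_{\min}(A)\lambda_{\min}(M)\|v\|^2$ (and similarly for $M\otimes A$).

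Next I would bound $\|F\|_F^2$ below by expanding $\|F\|_F^2 = 2\mathrm{tr}(Y^TY\eta^T\eta) + 2\mathrm{tr}(Y^T\eta Y^T\eta)$: the second summand is nonnegative for horizontal $\eta$ because $Y^T\eta = (Y^TY)S$ is similar to the symmetric matrix $(Y^TY)^{1/2}S(Y^TY)^{1/2}$, while the first obeys $\mathrm{tr}(Y^TY\eta^T\eta) \ge \lambda_{\min}(Y^TY)\|\eta\|_F^2$, giving $\|F\|_F^2 \ge 2\lambda_{\min}(Y^TY)\|K\|_F^2$. For $\mathcal{T}_2$, I would compute it directly: because $Y^T(I - P_Y) = 0$, the first summand of the metric~\eqref{IngredQuotMani-Metric1} applied to $(T_1,\eta)$ vanishes, so $\mathcal{T}_2 = 2\mathrm{tr}(Y^TY T_1^T\eta)$; substituting the expression for $T_1$, using $(I - P_Y)\eta = Y_\perp K$, and invoking symmetry of $\nabla h(YY^T)$, this reduces to $\mathcal{T}_2 = 2\,\mathrm{tr}\bigl(K^T Y_\perp^T \nabla h(YY^T)\,Y_\perp K\bigr)$. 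Since $K^T Y_\perp^T \nabla h(YY^T) Y_\perp K$ is a $p\times p$ matrix, the coarse trace bound $|\mathrm{tr}(B)| \le p\|B\|_2$ together with submultiplicativity of the operator norm and $\|\nabla h(YY^T)\|_2 \le \mathcal{R}_Y$ yields $|\mathcal{T}_2| \le 2p\mathcal{R}_Y\|K\|_F^2$.

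Combining, $\mathcal{T}_1 + \mathcal{T}_2 \ge \bigl(4\lambda_{\min}(A)\lambda_{\min}(M)\lambda_{\min}(Y^TY) - 2p\mathcal{R}_Y\bigr)\|K\|_F^2 + (\text{strictly positive }S\text{-contribution})$, and the hypothesis $2\lambda_{\min}(Y^TY)\lambda_{\min}(A)\lambda_{\min}(M) > p\mathcal{R}_Y$ makes the quadratic form strictly positive whenever $\eta_{\pi(Y)} \ne 0$. Self-adjointness follows because the first Hessian summand was already shown self-adjoint in Theorem~\ref{thm:PosDefPre} and the reduction $g_Y(T_1[\xi],\eta) = 2\mathrm{tr}(\xi^T(I-P_Y)\nabla h(YY^T)(I-P_Y)\eta)$ is visibly symmetric in $\xi \leftrightarrow \eta$. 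For the critical-point statement, $\mathrm{grad}f(\pi(Y^*)) = 0$ together with Proposition~\ref{IngredQuotMani-Prop-Gradient} forces $\nabla h(Y^*(Y^*)^T)Y^* = 0$, and the symmetry of $\nabla h$ then gives $\nabla h = (I-P_{Y^*})\nabla h(I-P_{Y^*})$; applying the sharper column-wise inequality $|\mathrm{tr}(K^TBK)| \le \|B\|_2\|K\|_F^2$ in place of the coarse trace bound yields $|\mathcal{T}_2| \le 2\mathcal{R}_{Y^*}\|K\|_F^2$, which eliminates the factor $p$.

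The main obstacle is the curvature estimate. Matching the factor $p$ in the general case requires the coarser trace inequality for $p\times p$ matrices, while at a critical point one must exploit the rank structure of $\nabla h(Y^*(Y^*)^T)$ to justify the sharper operator-norm bound; getting both bounds cleanly and coordinating them with the $\|F\|_F^2$ lower bound is the crux of the argument.
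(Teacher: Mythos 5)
Your proposal is correct and follows essentially the same route as the paper: the same splitting of the Hessian quadratic form into the preconditioner term (bounded below via the identities $\mathrm{tr}(F\nabla^2h(YY^T)[F])=\mathrm{vec}(F)^T\mathcal{L}\,\mathrm{vec}(F)$ and $\|F\|_F^2=g_Y(\xi_{\uparrow_Y},\xi_{\uparrow_Y})$ from Theorem~\ref{thm:PosDefPre}) plus the curvature term bounded by a multiple of $\mathcal{R}_Y$, with the same critical-point reduction $\nabla h(Y^*(Y^*)^T)Y^*=0$ to drop the projections. Your only real deviation is cosmetic and, if anything, cleaner: the bound $|\mathrm{tr}(K^TBK)|\le \|B\|_2\|K\|_F^2$ holds for any symmetric $B$ with no rank structure needed (so the ``crux'' you flag is not an obstacle, and it would even remove the factor $p$ in the general case, strengthening the statement), whereas the paper reaches the factor $p$ via the rougher step $\|I-P_Y\|_F^2=p$.
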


\begin{proof}
For any $\pi(Y)\in \mathbb{R}_*^{n\times p}/\mathcal{O}_p$, $\xi_{\pi(Y)}\in\mathrm{T}_{\pi(Y)}\mathbb{R}_*^{n\times p}/\mathcal{O}_p$ (whose horizontal lift is $\xi_{\uparrow_{Y}}$), we have 
\begin{align}	
 & g_Y((\mathrm{Hess}f(\pi(Y))[\xi_{\pi(Y)}])_{\uparrow_{Y}},\xi_{\uparrow_{Y}}) \nonumber \\
 &= g_Y((\mathscr{P}(\pi(Y))^{-1}[\xi_{\pi(Y)}])_{\uparrow_{Y}},\xi_{\uparrow_{Y}})  \nonumber \\
&\quad + g_{Y}((I-P_{Y})\nabla h(YY^T)(I-P_{Y})\xi_{\uparrow_{Y}}(Y^TY)^{-1},\xi_{\uparrow_{Y}})  \nonumber \\ 
&= g_{Y}((\mathscr{P}(\pi(Y))^{-1}[\xi_{\pi(Y)}])_{\uparrow_{Y}},\xi_{\uparrow_{Y}}) + 2\mathrm{tr}(\xi_{\uparrow_{Y}}^T(I-P_{Y})\nabla h(YY^T)(I-P_{Y})\xi_{\uparrow_{Y}}) \label{thm:PosDefHess:1} \\ 
&\ge \lambda_{\min}(\mathcal{L})\|\xi_{\uparrow_Y}\|_{Y}^2 - 2p \mathcal{R}_Y \|\xi_{\uparrow_{Y}}\|_{F}^2 \nonumber  \\
& \ge 2\lambda_{\min}(A)\lambda_{\min}(M)\|\xi_{\uparrow_Y}\|_Y^2 - 2p \mathcal{R}_Y\|\xi_{\uparrow_Y}\|_F^2, \nonumber
\end{align}
where $\mathcal{R}_Y:=\|\nabla h(YY^T)\|_F=\|AYY^TM+MYY^T-C\|_F$, the second to the last inequality is due to~\eqref{pre:04}, 
\begin{align*}
	\mathrm{tr}(\xi_{\uparrow_{Y}}^T(I-P_{Y})\nabla h(YY^T)(I-P_Y)\xi_{\uparrow_{Y}}) &= - \mathrm{tr}(\xi_{\uparrow_{Y}}^T(P_{Y}-I)\nabla h(YY^T)(I-P_Y)\xi_{\uparrow_{Y}}) \\ 
	& \ge - \|I-P_Y\|_F^2\mathcal{R}_Y\|\xi_{\uparrow_Y}\|_F^2, \nonumber 
\end{align*}
and $\|I-P_Y\|_F^2=p$; 
and the last due to $\lambda_{\min}(A\otimes M)=\lambda_{\min}(A)\lambda_{\min}(M)$. Additionally, we note that 
\begin{align*}
	\|\xi_{\uparrow_Y}\|_Y^2 &= g_Y(\xi_{\uparrow_Y},\xi_{\uparrow_Y}) \ge 2 \mathrm{tr}(Y^TY\xi_{\uparrow_Y}^T\xi_{\uparrow_Y}) = 2 \mathrm{tr}((Y\xi_{\uparrow_Y}^T)^TY\xi_{\uparrow_Y}^T)  \\ 
	&= 2 \mathrm{vec}(\xi_{\uparrow_Y}^T)(I\otimes Y^TY)\mathrm{vec}(\xi_{\uparrow_Y}^T) \\ 
	& \ge 2\lambda_{\min}(Y^TY)) \|\xi_{\uparrow_Y}\|_F^2. 
\end{align*}
Thus we have
\[
	g_Y((\mathrm{Hess}f(\pi(Y))[\xi_{\pi(Y)}])_{\uparrow_{Y}},\xi_{\uparrow_{Y}}) \ge (4\lambda_{\min}(Y^TY) \lambda_{\min}(A) \lambda_{\min}(M) - 2p \mathcal{R}_Y )\|\xi_{\uparrow_Y}\|_F^2.
\]

%where $\mathcal{R}_Y:=\|\nabla h(YY^T)\|_F=\|AYY^TM+MYY^T-C\|_F$, the second to the last inequality is due to~\eqref{pre:04}, 
%\begin{align*}
%	\mathrm{tr}(\xi_{\uparrow_{Y}}^T(I-P_{Y})\nabla h(YY^T)(I-P_Y)\xi_{\uparrow_{Y}}) &= - \mathrm{tr}(\xi_{\uparrow_{Y}}^T(P_{Y}-I)\nabla h(YY^T)(I-P_Y)\xi_{\uparrow_{Y}}) \\ 
%	& \ge - \|I-P_Y\|_F^2\mathcal{R}_Y\|\xi_{\uparrow_Y}\|_F^2, 
%\end{align*}
%and $\|I-P_Y\|_F^2=p$; 
%and the last due to $\lambda_{\min}(A\otimes M)=\lambda_{\min}(A)\lambda_{\min}(M)$.

Let $\pi(Y^*)$ be a critical point of Problem~\eqref{Pro_stat-FinalProb}. Then we have 
\begin{align*}
	(\mathrm{grad}f(\pi(Y^*)))_{\uparrow_{Y^*}} =\left( I - \frac{1}{2}P_{Y^*} \right)\nabla h(Y^*(Y^*)^T)Y^*((Y^*)^TY^*)^{-1} = 0	,
\end{align*}
which gives that 
\[
	\nabla h(Y^*(Y^*)^T)P_{Y^*} = 0,
\]
where we multiply by $(I+P_{Y^*})$ and $(Y^*)^T$ from left and right-hand sides respectively. This shows that $\nabla h(Y^*(Y^*)^T)\in \mathrm{range}(Y^*)^{\perp}$, and thus $(I-P_{Y^*})\nabla h(Y^*(Y^*)^T)=\nabla h(Y^*(Y^*)^T)$.
Using this fact, for  any $\xi_{\uparrow_{Y^*}}\in \mathrm{H}_{Y^*}$, we have that 
\begin{align*}
	\eqref{thm:PosDefHess:1} &=  g_{Y^*}((\mathscr{P}(\pi(Y^*))^{-1}[\xi_{\pi(Y^*)}])_{\uparrow_{Y^*}},\xi_{\uparrow_{Y^*}}) + 2\mathrm{tr}(\xi_{\uparrow_{Y^*}}^T \nabla h(Y^*(Y^*)^T) \xi_{\uparrow_{Y^*}}) \\ 
	& \ge (4\lambda_{\min}((Y^*)^TY^*) \lambda_{\min}(A)\lambda_{\min}(M)-2\mathcal{R}_{Y^*})\|\xi_{\uparrow_{Y^*}}\|^2_{F},
\end{align*}
which completes the proof.
\end{proof}

\paragraph{Computing the preconditioner.}
Note that for any $Z \in \mathrm{T}_Y\mathbb{R}_*^{n \times p}=\mathbb{R}^{n\times p}$, there exist $W \in \mathbb{R}^{p \times p}, K \in \mathbb{R}^{(n - p) \times p}$ such that $Z = YW + Y_{\perp_M} K$.
% where $Y_{\perp_M} \in \mathbb{R}^{n \times (n - p)}$ satisfying $Y^T M Y_{\perp_M} = 0$ and $Y_{\perp_M}^T Y_{\perp_M} = I_{n - p}$. 
Furthermore, we have $Z = Y \mathrm{sym}(W) + Y \mathrm{skew}(W) + Y_{\perp_M} K$. Note that $Y \mathrm{skew}(W) \in \mathrm{V}_Y$, we have $Y (Y \mathrm{skew}(W))^T + Y \mathrm{skew}(W) Y^T = 0$, which implies the skew symmetric part of $W$ does not affect the solution of~\eqref{Precond-Metric1-ApproxNewEqu}. Next, we show an approach to find the solution of
\begin{align} \label{e03}
  \left( I - \frac{1}{2}P_Y \right)\nabla^2h(YY^T)[Y\xi_{Y}^T+\xi_{Y}Y^T]Y(Y^TY)^{-1}=\eta_{\uparrow_Y},
\end{align}
over the space $\{Y S + Y_{\perp_M} K \mid S \in\mathcal{S}_p^{\mathrm{sym}}, K \in \mathbb{R}^{(n - p) \times p}  \}$.

%\whcomm{[ZHTODO: $\xi_{\uparrow_Y} \Rightarrow \xi_Y$. ]}{}
Multiplying~\eqref{e03} by $I+P_Y$ and $(Y^TY)$ from left and right respectively yields 
\begin{equation} \label{e02}
  \nabla^2h(YY^T)[Y\xi_{Y}^T+\xi_{Y}Y^T]Y=(I+P_Y)\eta_{\uparrow_Y}(Y^TY).
\end{equation}
It follows from~\eqref{e02} and the expression of $\nabla^2 h(Y Y^T)$ in Proposition~\ref{IngredQuotMani-Prop-Gradient} that
\begin{align} \label{Precond-Metric1-ApproxNewEqu-1}
  [A(Y\xi_{Y}^T+\xi_{Y}Y^T)M+M(Y\xi_{Y}^T+\xi_{Y}Y^T)A]Y=(I+P_Y)\eta_{\uparrow_Y}(Y^TY).
\end{align} 
Multiplying~\eqref{Precond-Metric1-ApproxNewEqu-1} from left by $Y$ yields
\begin{equation}
\begin{aligned} \label{Precond-Metric1-1}
  &Y^TAY\xi_{Y}^TMY +Y^TA\xi_{Y}Y^TMY+Y^TMY\xi_{Y}^TAY+Y^TM\xi_{Y}Y^TAY\\ 
  &= 2Y^T\eta_{\uparrow_Y}(Y^TY) 
\end{aligned}
\end{equation}
and multiplying~\eqref{Precond-Metric1-ApproxNewEqu-1} from left by $Y_{\perp_M}$ yields
\begin{equation}
  \begin{aligned} \label{Precond-Metric1-2}
    & Y_{\perp_M}^TAY\xi_{Y}^TMY+Y_{\perp_M}^TA\xi_{Y}Y^TMY+Y_{\perp_M}^T  M\xi_{Y}Y^TAY\\
    & =Y_{\perp_M}^T(I+Y(Y^TY)^{-1}Y^T)\eta_{\uparrow_Y}(Y^TY) .
  \end{aligned}
\end{equation}
Plugging the decomposition $\xi_{Y}=YS_\xi+Y_{\perp_M}K_\xi$ into~\eqref{Precond-Metric1-1} and~\eqref{Precond-Metric1-2} respectively gives
\begin{equation}
  \begin{aligned} \label{Precond-Metric1-12}
    & 2Y^TAYS_\xi Y^TMY+Y^TAY_{\perp_M} K_\xi Y^TMY+Y^TMYK_\xi^TY_{\perp_M}^TAY + 2Y^TMYS_\xi Y^TAY \\ 
    & = 2Y^T\eta_{\uparrow_Y}(Y^TY), 
  \end{aligned}
\end{equation}
and
\begin{equation}
  \begin{aligned} \label{Precond-Metric1-22}
    & 2Y_{\perp_M}^TAYS_\xi Y^TMY+Y_{\perp_M}^TAY_{\perp_M} K_\xi Y^TMY +Y_{\perp_M}^T MY_{\perp_M} K_\xi Y^TAY \\ 
    & = Y_{\perp_M}^T(I+Y(Y^TY)^{-1}Y^T)\eta_{\uparrow_Y}(Y^TY).
  \end{aligned}
\end{equation}
Let $Z_\xi=Y_{\perp_M}K_\xi$. We obtain from \eqref{Precond-Metric1-22} that
\begin{align*} 
  Y_{\perp_M}^TAZ_\xi Y^TMY+Y_{\perp_M}^TMZ_\xi Y^TAY=Y_{\perp_M}^T(I+Y(Y^TY)^{-1}Y^T)\eta_{\uparrow_Y}(Y^TY)-2Y_{\perp_M}^TAYS_\xi Y^TMY. 
\end{align*}
Let $Y^TMY=LL^T$ be the Cholesky factorization. We have 
\begin{align*}
  & Y_{\perp_M}^TAZ_\xi L+Y_{\perp_M}^TMZ_\xi LL^{-1}Y^TAYL^{-T}\\ 
  & =(Y_{\perp_M}^T(I+Y(Y^TY)^{-1}Y^T)\eta_{\uparrow_Y}(Y^TY)-2Y_{\perp_M}^TAYS_\xi Y^TMY)L^{-T}. 
\end{align*}
Let $L^{-1}Y^TAYL^{-T}=Q\Lambda Q^T$ be the eigenvalue decomposition. We have 
 \begin{align*} 
  &Y_{\perp_M}^T AZ_\xi LQ+Y_{\perp_M}^TMZ_\xi LQ\Lambda  \\
  & = (Y_{\perp_M}^T(I+Y(Y^TY)^{-1}Y^T)\eta_{\uparrow_Y}(Y^TY)-2Y_{\perp_M}^TAYS_\xi Y^TMY)L^{-T}Q. 
  \end{align*}
 Let $\tilde{Z}_\xi=Z_\xi LQ$. Then $Y^TM\tilde{Z}_\xi=Y^TMY_{\perp_M} K_\xi LQ=0$ and we have
  \begin{align*} 
    & Y_{\perp_M}^TA\tilde{Z}_\xi + Y_{\perp_M}^T M\tilde{Z}_\xi\Lambda = (Y_{\perp_M}^T(I+Y(Y^TY)^{-1}Y^T)\eta_{\uparrow_Y}(Y^TY)-2Y_{\perp_M}^TAYS_\xi Y^TMY)L^{-T}Q,
  \end{align*} 
that is, 
\begin{align} \label{e04}
  (I-\hat{V}\hat{V}^T)(A\tilde{Z}_\xi+M\tilde{Z}_\xi\Lambda)=(I-\hat{V}\hat{V}^T)((I+Y(Y^TY)^{-1}Y^T)\eta_{\uparrow_Y}(Y^TY)-2AYS_\xi Y^TMY)L^{-T}Q, 
\end{align}
where $\hat{V}=\mathrm{orthonormal}(MY)=MYG$.
Using $\tilde{Z}_\xi(:,i)$ to denote the $i$-th column of $\tilde{Z}_\xi$, we have the saddle-point problems from~\eqref{e04} as follows 
\begin{equation}
  \begin{aligned} \label{Precond-Metric1-SadProd1}    
    \begin{bmatrix} A+\lambda_iM &\hat{V}\\ \hat{V}^T & 0 \end{bmatrix} \begin{bmatrix} \tilde{Z}_\xi(:,i)\\y\end{bmatrix}=\begin{bmatrix} F_{\eta,\xi}(:,i) \\ 0 \end{bmatrix}
  \end{aligned}
\end{equation}
where $\Lambda = \mathrm{diag}\{\lambda_1,\cdots,\lambda_p\}$, $y\in\mathbb{R}^p$, $F_{\eta,\xi}=(I-\hat{V}\hat{V}^T)((I+Y(Y^TY)^{-1}Y^T)\eta_{\uparrow_Y}(Y^TY)-2AYS_\xi Y^TMY)L^{-T}Q$, and the second equation holds due to $\hat{V}^T\tilde{Z}_\xi=G^TY^TMY_{\perp_M} K_\xi LQ=0$. Therefore, if we denote the solution of the $i$-th saddle-point problem, corresponding to (\ref{Precond-Metric1-SadProd1}), with right-hand side $F$ by $\mathcal{T}_i^{-1}(F)$, then (\ref{Precond-Metric1-SadProd1}) can be write as 
\begin{equation}
  \begin{aligned} \label{Precond-Metric1-SadProdSol}
    \tilde{Z}_\xi(:,i) &=\mathcal{T}_i^{-1}\left((I-\hat{V}\hat{V}^T)(I+Y(Y^TY)^{-1}Y^T)\eta_{\uparrow_Y}(Y^TY)L^{-1}Q(:,i)\right) \\ 
    & \quad - \mathcal{T}_i^{-1}\left((I-\hat{V}\hat{V}^T)2AY\right)L^{-T}Q\tilde{S}_\xi(:,i),
  \end{aligned}
\end{equation}
where $\tilde{S}_\xi=Q^TL^TS_\xi LQ$. 

Plugging $\tilde{S}_\xi=Q^TL^TS_\xi LQ$ into (\ref{Precond-Metric1-1}), we obtain 
\begin{align} \label{e05}
  2\Lambda\tilde{S}_\xi+2\tilde{S}_\xi\Lambda+Q^TL^{-1}Y^TA\tilde{Z}_\xi+\tilde{Z}_\xi^TAYL^{-T}Q=2Q^TL^{-1}Y^T\eta_Y(Y^TY)L^{-T}Q.
\end{align}
Let 
\begin{align*}
  v_i&=Q^TL^{-1}Y^TA\mathcal{T}_i^{-1}\left( (I-\hat{V}\hat{V}^T)(I+Y(Y^TY)^{-1}Y^T)\eta_Y(Y^TY)L^{-T}Q(:,i)\right), \\ w_i&=Q^TL^{-1}Y^TA\mathcal{T}_i^{-1}\left( (I-\hat{V}\hat{V}^T)2AYL^{-T}Q\right)\tilde{S}_\xi(:,i).
\end{align*}
It follows from~\eqref{e05} that
\begin{align*}
  2\Lambda \tilde{S}_\xi+2\tilde{S}_\xi\Lambda + \begin{bmatrix} v_1-w_1&\cdots&v_k-w_k \end{bmatrix} + \begin{bmatrix} (v_1-w_1)^T \\ \cdots\\(v_k-w_k)^T \end{bmatrix}=2Q^TL^{-1}Y^T\eta_Y(Y^TY)L^{-T}Q,
\end{align*}
that is, 
\begin{align} \label{Precond-Metric1-S}
  (\mathcal{K}+\Pi\mathcal{K}\Pi)\mathrm{vec}(\tilde{S}_\xi)=\mathrm{vec}(R),
\end{align}
where $\mathcal{K}=\mathrm{diag}(K_i)\in\mathbb{R}^{p^2\times p^2}$, $K_i=2\lambda_iI-Q^TL^{-1}Y^TA\mathcal{T}_i^{-1}\left( (I-\hat{V}\hat{V}^T)2AYL^{-T}Q\right)$,
$\Pi$ is the perfect shuttle matrix \cite{Loan2000TheUK} such that $\mathrm{vec}(X)=\Pi\mathrm{vec}(X)^T$ and $R=2Q^TL^{-1}Y^T\eta_{\uparrow_Y}(Y^TY)L^{-T}Q-[v_1,\cdots,v_k]-[v_1,\cdots,v_k]^T$.
To sum up, after solving (\ref{Precond-Metric1-S}), we obtain $\tilde{Z}_\xi$ from (\ref{Precond-Metric1-SadProdSol}). Therefore, the solution $\xi_Y$ of~\eqref{e03} is obtained.

Since $\xi_Y = \mathcal{P}_Y^{\mathrm{V}}(\xi_Y) + \mathcal{P}_Y^{\mathrm{H}} (\xi_Y)$ and $\mathcal{P}_Y^{\mathrm{V}} (\xi_Y)$ does not influence the left hand side of~\eqref{e03} because of $Y(\mathcal{P}_Y^{\mathrm{V}}(\xi_Y))^T+\mathcal{P}_Y^{\mathrm{V}}(\xi_Y)Y^T=0$, the solution to~\eqref{Precond-Metric1-ApproxNewEqu} is $\xi_{\uparrow_Y} = \mathcal{P}_Y^{\mathrm{H}} (\xi_Y)$. The final algorithm for solving~\eqref{Precond-Metric1-ApproxNewEqu} is stated in Algorithm \ref{Precond-Metric1-Alg}.

\begin{algorithm}[htp]
  \caption{Preconditioner under Riemannian metric~\eqref{IngredQuotMani-Metric1}} 
  \begin{algorithmic}[1] \label{Precond-Metric1-Alg}
  \REQUIRE Matrices $A$ and $M$ and horizontal vector $\eta_{\uparrow_Y}\in\mathrm{H}_Y^1$;
  \ENSURE $\xi_{\uparrow_Y}$ satisfying (\ref{Precond-Metric1-ApproxNewEqu-1});
  \STATE Set $LL^T\gets Y^TMY$ (Cholesky factorization);
  \STATE Set $Q\Lambda Q^T\gets L^{-1}Y^TAYL^{-T}$ (Eigenvalues decomposition); 
  \STATE Set $\hat{V}\gets \mathrm{orthonormal}(MY)$;
  \STATE Set $v_i\gets Q^TL^{-1}Y^TA\mathcal{T}_i^{-1}\left( (I-\hat{V}\hat{V}^T)(I+Y(Y^TY)^{-1}Y^T)\eta_{\uparrow_Y}(Y^TY)L^{-T}Q(:,i)\right)$;
  \STATE Set $K_i\gets 2\lambda_iI-Q^TL^{-1}Y^TA\mathcal{T}_i^{-1}\left( (I-\hat{V}\hat{V}^T)2AYL^{-T}Q\right)$;
  \STATE Set $R\gets 2Q^TL^{-1}Y^T\eta_{\uparrow_Y}(Y^TY)L^{-T}Q-[v_1,\cdots,v_k]-[v_1,\cdots,v_k]^T$;
  \STATE Solve for $\tilde{S}_\xi$ by $(\mathcal{K}+\Pi\mathcal{K}\Pi)\mathrm{vec}(\tilde{S}_\xi)=\mathrm{vec}(R)$;
  \STATE Solve for $\tilde{Z}_\xi$ by  
%  {\small
  \begin{align*}
  \tilde{Z}_\xi (:, i)&\gets \mathcal{T}_i^{-1}\left((I-\hat{V}\hat{V}^T)(I+Y(Y^TY)^{-1}Y^T)\eta_{\uparrow_Y}(Y^TY)L^{-1}Q(:,i)\right) \\ & \;\;\;\;\; - \mathcal{T}_i^{-1}\left((I-\hat{V}\hat{V}^T)2AY\right)L^{-T}Q\tilde{S}_\xi(:,i);
  \end{align*}
%  }
  \STATE Set $Z_{\xi,1}\gets \frac{1}{2}Y\left( (Y^TY)^{-1}Y^T\tilde{Z}_\xi Q^TL^{-1} + L^{-T}Q\tilde{Z}_{\xi}^TY(Y^TY)^{-1}  \right)$;
  \STATE Set $Z_{\xi,2}\gets (I - Y(Y^TY)^{-1}Y^T\tilde{Z}_{\xi}Q^TL^{-1}$;
  \STATE Set $\xi_{\uparrow_Y}\gets YL^{-T}Q\tilde{S}_\xi Q^TL^{-1}+ Z_{\xi,1} + Z_{\xi,2}$.
  
 \end{algorithmic}
\end{algorithm}

For applying this preconditioner, the dominating costs lie in solving the saddle-point problems and solving the linear system (\ref{Precond-Metric1-S}). As for the saddle-point problems (\ref{Precond-Metric1-SadProd1}), we firstly can solve $\mathcal{T}_i(X_i)=B_i$ by eliminating the (negative) Schur complement $S_i=\hat{V}^T(A+\lambda_iM)^{-1}\hat{V}$ (see \cite{benzi_numerical_2005} for details), where $$ B_i=[(I-\hat{V}\hat{V}^T)(I+Y(Y^TY)^{-1}Y^T)\eta_{\uparrow_Y}(Y^TY)^{-1}L^{-1}Q(:,i),\;(I-\hat{V}\hat{V}^T)2AYL^{-1}Q].
$$ 
Therefore, we have
\begin{align*} 
v_i &= Q^{T}L^{-1}Y^TAX_i(:,1), \\ 
%&\whcomm{}{w_i= ...} \\
K_i &= 2\lambda_iI- Q^TL^{-1}Y^TA J_i(:,2:\text{end}),
\end{align*}
where $N_i = S_i^{-1}(\hat{V}^T(A+\lambda_iM)^{-1}B_i)$ and $J_i=(A+\lambda_iM)^{-1}B_i - (A+\lambda_iM)^{-1}\hat{V}N_i$. The linear system~\eqref{Precond-Metric1-S} therefore can be solved by the conjugate gradient method.

\subsection{Increasing Rank Algorithm}

Algorithm \ref{RieNew-TruncatedNewton} gives a low-rank approximation when the rank of the exact solution to Equation~\eqref{Intro-LyapMatEqu} is known in advance. 
In practice, however, it
%the rank of the exact solution $X^*$ of Equation (\ref{Intro-LyapMatEqu}) 
is unknown beforehand. 
Therefore, we propose a rank-increasing algorithm, which solves Equation~\eqref{Intro-LyapMatEqu} with a low estimation of the rank. If the residual of the accumulation point is not sufficiently small, then the rank is increased and Problem~\eqref{Pro_stat-FinalProb} is solved by Algorithm~\ref{RieNew-TruncatedNewton} with an initial iterate motivated from the accumulation point of the lower rank. 
%Hence, we find a series of low-rank approximations of Problem (\ref{Pro_stat-FinalProb}) by Algorithm \ref{RieNew-TruncatedNewton} with rank $p$ increasing. 
The details are stated in Algorithm~\ref{FinalAlgPrecond-RLyap-RNewton} (IRRLyap).

\begin{algorithm}[htbp]
  \caption{An Increasing Rank Riemannian Method for Lyapunov Equations (IRRLyap)} 
  \begin{algorithmic}[1] \label{FinalAlgPrecond-RLyap-RNewton}
  \REQUIRE minimum rank $p_{\min}$; maximum rank $p_{\max}$; rank increment $p_{\mathrm{inc}}$; initial iterate $Y_{p_{\min}}^{\mathrm{initial}} \in \mathbb{R}_*^{n\times p_{\min}}$; tolerance sequence of inner iteration $\{\tau_{p}:p \in \{ p_{\min}, p_{\min} + p_{\mathrm{inc}}, p_{\min} + 2 p_{\mathrm{inc}},\ldots, p_{\max} \} \}$; residual tolerance $\tau$;
  \ENSURE low-rank approximation $\widetilde{Y}$;
  \FOR{$p=p_{\min}, p_{\min} + p_{\mathrm{inc}}, p_{\min} + 2 p_{\mathrm{inc}}, \ldots, p_{\max}$}
  \STATE \label{FinalAlgPrecond-RLyap-RNewton-st01} Invoke an optimization algorithm, such as Algorithm \ref{RieNew-TruncatedNewton}, to approximately solve Problem \eqref{Pro_stat-FinalProb} with the initial iterate $\pi(Y_p^{\mathrm{initial}})$ until the last iterate $\pi(Y_p)$ satisfies $\|\grad f(\pi(Y_p))\|\le \tau_{p} \|\mathrm{grad}f(\pi(Y_p^{\mathrm{initial}}))\|$;
%   \zwhcomm{where $Y_p\gets Y_p^{(k)}$ if $\|\grad f(\pi(Y_p^{(k)}))\|\le\tau_{p}$}{};
  \STATE \label{FinalAlgPrecond-RLyap-RNewton-st02} Compute relative residual of $Y_p$: $r_p \gets \|AY_pY_p^TM + MY_pY_p^TA-C\|_F/\|C\|_F$; 
  \IF {$r_p\le \tau$}
  \STATE \label{FinalAlgPrecond-RLyap-RNewton-st03} Return $\widetilde{Y}\gets Y_p$;
  \ELSE 
  \STATE \label{FinalAlgPrecond-RLyap-RNewton-st04} Calculate the next initial iterate $Y_{p+p_{\mathrm{inc}}}^{\mathrm{initial}}$ by performing one step of steepest descent on $\begin{bmatrix}
    Y_p & \mathbf{0}_{n\times p_{\text{inc}}}
  \end{bmatrix}$;
  \ENDIF
  \ENDFOR
  \STATE Return $\widetilde{Y}\gets Y_{p_{\max}}$;
 \end{algorithmic}
\end{algorithm}

Step~\ref{FinalAlgPrecond-RLyap-RNewton-st01} in Algorithm~\ref{FinalAlgPrecond-RLyap-RNewton} can invoke Algorithm~\ref{RieNew-TruncatedNewton}, which approximately solves Proplem~\eqref{Pro_stat-FinalProb} with fixed rank $p$ in the sense that the norm of gradient is reduced sufficiently, i.e., $\|\grad f(\pi(Y_p))\|\le \tau_{p} \|\mathrm{grad}f(\pi(Y_p^{\mathrm{initial}}))\|$. The tolerance sequence $\{\tau_p\}$ can be prescribed parameters or adaptively dependent on the current iterate.

	Step~\ref{FinalAlgPrecond-RLyap-RNewton-st02} computes the relative residual $r_p$ by following the steps in~\cite{Bart10}. Note that the steps in~\cite{Bart10} avoid the computations of the $n$-by-$n$ matrices in the residual and only require $O(n p^2)$ flops.
	
	Step~\ref{FinalAlgPrecond-RLyap-RNewton-st04} performs one step steepest descent with initial iterate $[Y_p\; 0_{n\times p_{\mathrm{inc}}}]$ for minimizing the cost function $\tilde{f}: \mathbb{R}^{n \times (p + p_{\mathrm{inc}})} \rightarrow \mathbb{R}: Y \mapsto \mathrm{tr}(Y^TAYY^TMY) - \mathrm{tr}(Y^TCY)$. Since $\tilde{f}$ is defined on the Euclidean space $\mathbb{R}^{n \times (p + p_{\mathrm{inc}})}$, the steepest descent algorithm can be defined at $[Y_p\; 0_{n\times p_{\mathrm{inc}}}]$. In our implementation, the step size is found by the backtracking line search algorithm. Therefore, Algorithm~\ref{FinalAlgPrecond-RLyap-RNewton} is a descent algorithm in the sense that the value of $h(Y_p Y_p^T) > h(Y_{p+p_\mathrm{inc}} Y_{p+p_\mathrm{inc}}^T)$.

\section{Numerical Experiments} \label{NumExp} 

In this section, we illustrate the performance of the proposed algorithms. % with three examples. 
%In the first example shown 
In Section~\ref{sec:ComAlg1}, we demonstrate the performance of the proposed algorithm~\ref{RieNew-TruncatedNewton} compared with some existing Riemannian methods and
the gain of the proposed preconditioner for Algorithm~\ref{RieNew-TruncatedNewton}, and investigate the superiority defining the problem on quotient manifold $\mathbb{R}_*^{n\times p}/\mathcal{O}_p$ over defining the problem on the embedded submanifold $\mathcal{S}_+(p,n)$. 
%The second and third examples are used i
In Section~\ref{NumExp-CompWithExistingSols}.
%In the second example, 
we investigate the quality of the solutions and the efficiency of Algorithm~\ref{FinalAlgPrecond-RLyap-RNewton} by comparing with some state-of-the-art low-rank methods. 
% In the third example, we report on the performance of Algorithm~\ref{FinalAlgPrecond-RLyap-RNewton} compared to three state-of-the-art low-rank methods for real-world Lyapunov equations. 

\subsection{Testing Environment, Data, and Parameter Settings}

All experiments are done in MATLAB R2021b on a 64-bit GNU/Linux platform with 2.10 GHz CPU (Intel Xeon Gold 5318Y). The Riemannian optimization methods used in this paper are implemented based on ROPTLIB~\cite{HAGH18}. 
%The code
%, a C++ package for Riemannian optimization with interfaces to MATLAB and Julia\footnote{The code is available at www}.

In the first example, we generate the data by discretizing the finite difference of $2D$ poisson problem on the square, as listed Listing~\ref{code2}. In remaining two examples, the generalized Lyapunov equation is drawn from a RAIL benchmark problem\footnote{The data are available at \href{https://www.cise.ufl.edu/research/sparse/matrices/list\_by\_id.html}{https://www.cise.ufl.edu/research/sparse/matrices/list\_by\_id.html} with different dimensions.} stemmed from a semidiscretized heat transfer problem for optimal cooling of steel profiles \cite{Benner2005ASH, Saak2004EfficientNS}. 

The elements of the initial iterate $Y_0$ are drawn from the standard normal distribution. In Algorithm~\ref{RieNew-TruncatedNewton}, the step sizes satisfy the Armijo-Goldstein conditions and are found by interpolation-based backtracking algorithm~\cite{DS1983}. The default parameters in ROPTLIB are used.
In Algorithm~\ref{FinalAlgPrecond-RLyap-RNewton}, the tolerance for the inner iteration is chosen adaptively by $\tau_p= \min(10^{-6}, r_p / 10)$. 

{\scriptsize\begin{lstlisting}[language=MATLAB,caption=the finite difference discretized $2D$ poisson problem on the square,label=code2]
	h = 1 / (n + 1);
	A = 1 / (h * h) * spdiags(ones(n, 1) * [-1 2 1], [-1 0 1], n, n)
	M = spdiags([rand(n - 1, 1); 0] + 0.1 , 0, n, n);
	c = randn(n, 1);
	C = c * c';
\end{lstlisting}}

\subsection{Comprehensive Comparisons of Algorithm~\ref{RieNew-TruncatedNewton}} \label{sec:ComAlg1}

% {\color{red}

In this section, we demonstrate the performance of the proposed Algorithm~\ref{RieNew-TruncatedNewton} compared to RTRNewton~\cite{Absil2007TrustRegionMO}, RCG~\cite{Boumal2015LowrankMC}, and LRBFGS~\cite{huang_riemannian_2018}, the gain of the proposed preconditioner in Section~\ref{sec:IncRankAlgPrecond:Precond} compared to the one in~\cite{Bart10}, as well as the superiority reformulating the problem on the quotient manifold $\mathbb{R}_*^{n\times p}/\mathcal{O}_p$ against the embedded submanifold $\mathcal{S}_+(p,n)$.  The testing data are generated according to Listing~\ref{code2}, and the results are reported in Figure~\ref{fig:NumExp:1},~Table~\ref{NumExp-table1} and~Table~\ref{NumExp-table2}.

\paragraph{Comparisons with Existing Riemannian Optimization Methods.}
Here we compared RNewton (i.e., Algorithm~\ref{RieNew-TruncatedNewton} combined with Algorithm~\ref{RieNew-TrunConjGrad}), RTRNewton, RCG, and LRBFGS methods, which are commonly used Riemannian smooth methods. The Lyapunov equations are generated according to Listing~\ref{code2}, and the comparisons are reported in Figure~\ref{fig:NumExp:1}, where the first row corresponds to the parameters $n=6000$ and $p=5$, and the second row corresponds to the parameters $n=8000$ and $p=6$. In Figure~\ref{fig:NumExp:1}, all methods apply the proposed preconditioner. 

\begin{figure}[htp]
\centering
% first figure
\begin{minipage}[t]{0.45\textwidth}
\centering
\includegraphics[width=6cm]{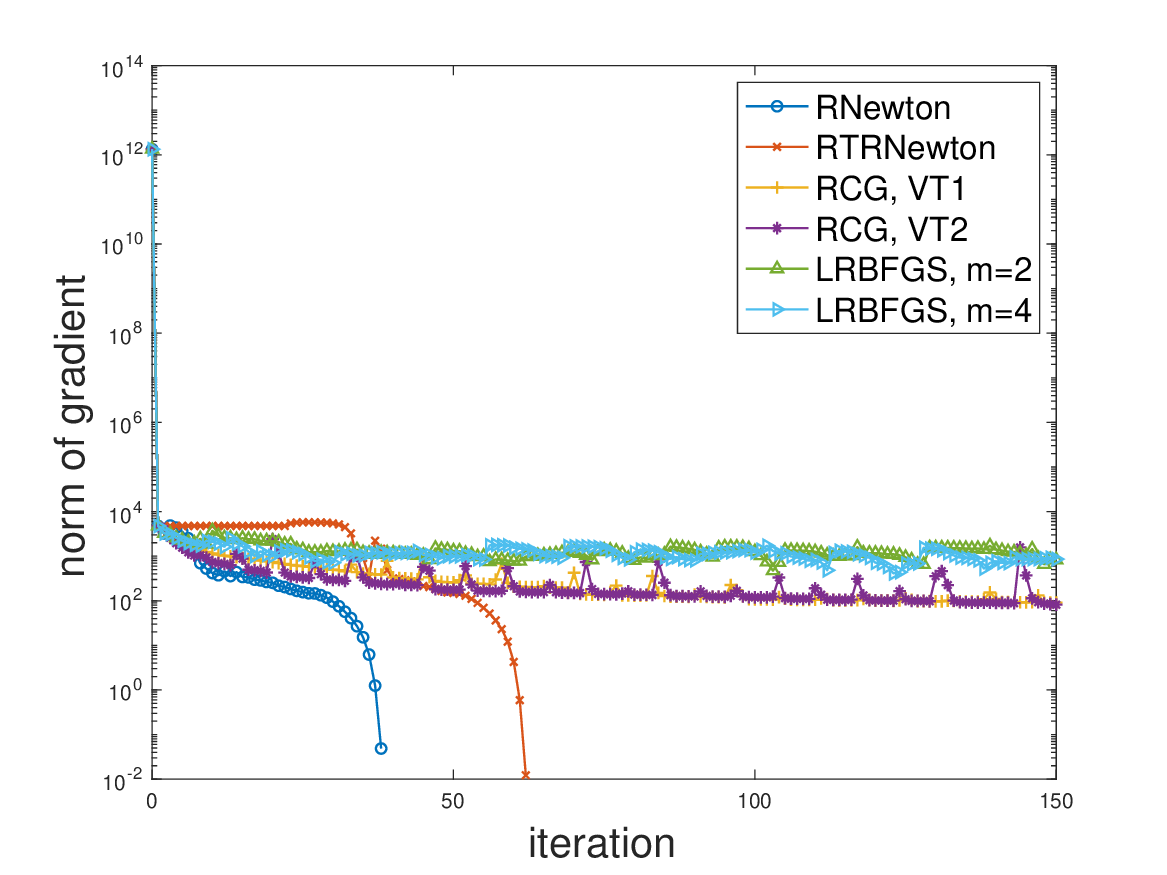}
\end{minipage}
% second figure
\begin{minipage}[t]{0.45\textwidth}
\centering
\includegraphics[width=6cm]{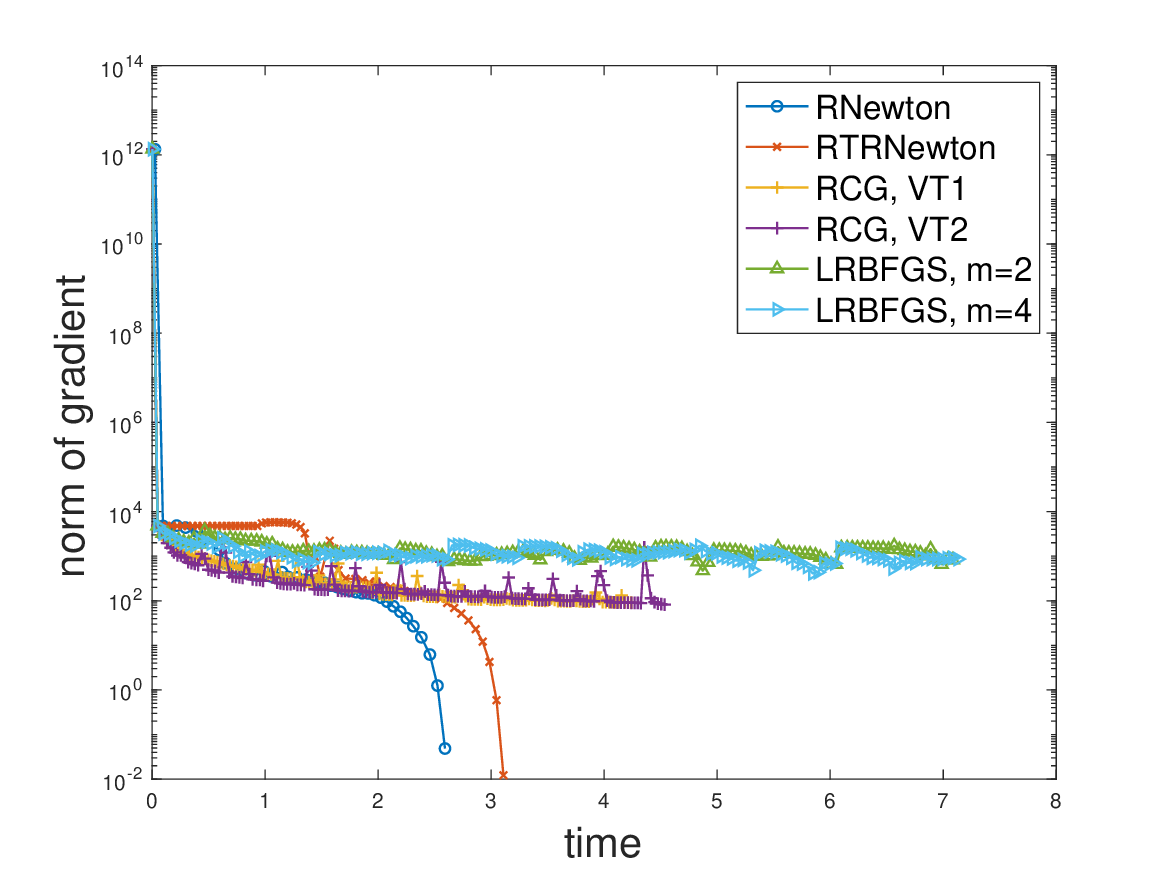}
\end{minipage}
% third figure
\begin{minipage}[t]{0.45\textwidth}
\centering
\includegraphics[width=6cm]{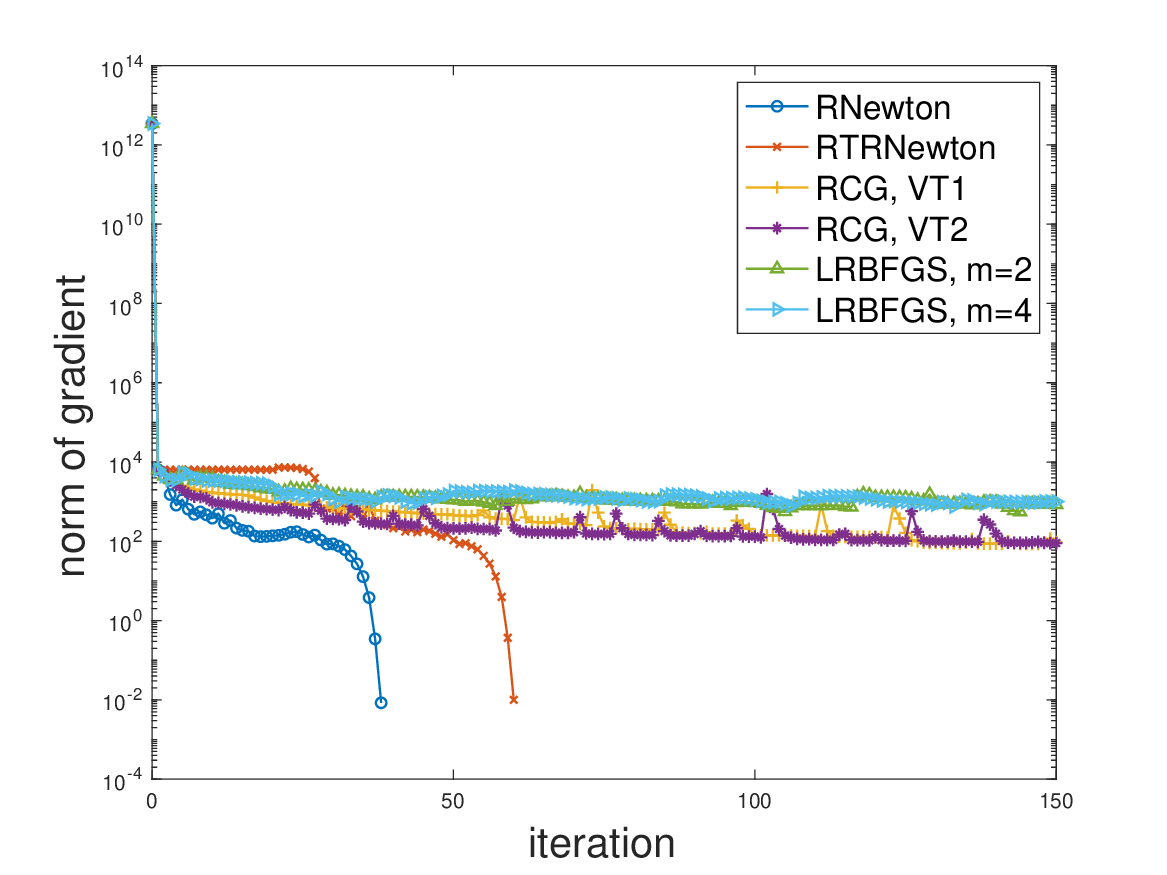}
\end{minipage}
% fourth figure
\begin{minipage}[t]{0.45\textwidth}
\centering
\includegraphics[width=6cm]{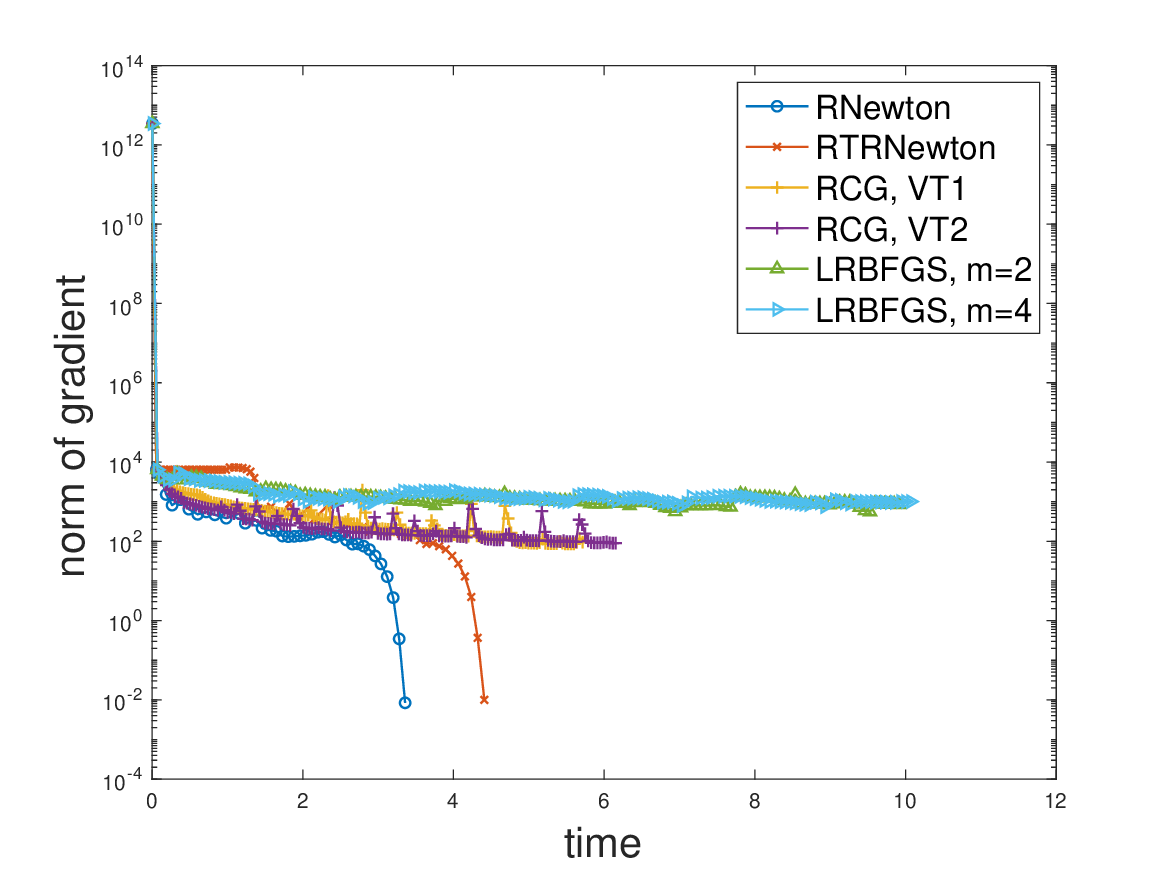}
\end{minipage}
\caption{Compare RNewton, RTRNewton, RCG, and LRBFGS methods. For RCG, VT1 and VT2 respectively means that the vector transport is given vector transport by parallelization and by projection. For LRBFGS, $m$ means memory size.}
\label{fig:NumExp:1}
\end{figure}

%We observe that RCG converges slowest compared with the other three methods in the two problems. Though LRBFGS converges rapidly in the first several iterations, it then fall into stagnation.
We observe that all methods under consideration declined sharply at the beginning, which is due to the use of the proposed preconditioner (it will be discussed in detail later). Then both RCG and LRBFGS slowly decrease.
RNewton and RTRNewton outperform the other two methods both in terms of the number of iterations and CPU time and they both exhibit superlinear convergence locally. While we note that RNewton is superior to RTRNewton in the number of iterations and CPU time. This shows the superiority of RNewton over other methods under consideration for solving Problem~\eqref{Pro_stat-FinalProb}.

\paragraph{The Proposed Preconditioner v.s. the Preconditioner in~\cite{Bart10}.}
We observe that from Table~\ref{NumExp-table1}, after applying preconditioners, the number of inner iterations is significantly reduced, which greatly improves the efficiency of the algorithm. 
Since the proposed preconditioner is a generalization of the one in~\cite{Bart10}, we should expect that when the mass matrix $M=I$, the gain of applying the two preconditioners is coincident due to that the preconditioner~\cite{Bart10} is derived under setting $M=I$. When $M\not=I$, due to that the preconditioner in~\cite{Bart10} does not consider the structure of $M$, it can be expected that the efficiency of using the preconditioner in~\cite{Bart10} is significantly lower than the efficiency of using the proposed preconditioner. These are verified by Table~\ref{NumExp-table1} and Table~\ref{NumExp-table2}. 

\begin{table}[htp] 
\caption{Comparison of RNewton and RTRNewton on $\mathbb{R}_*^{n\times p}/\mathcal{O}_p$ for solving Problem~\eqref{Pro_stat-FinalProb} with data from Listing~\ref{code2}. There exist two choices for the mass matrix $M$: i) $M=I$ or ii) $M=\mathrm{diag}([\mathrm{rand}(n-1,1);0]+0.1)$. Algorithm stops if $\|\mathrm{grad}f(x_i)\|/\|\mathrm{grad}f(x_0)\|<10^{-12}$. Indices ``iter'' and ``nH'' denote the number of iterations and the number of action of Hessian evaluations, respectively. Besides, the number $a.bc_{-k}$ denotes $a.bc\times 10^{-k}$.}
\begin{minipage}[ht]{0.25\textwidth}
\resizebox{16cm}{!}{
\begin{tabular}{lllllllll}
%\hline
\toprule
\multicolumn{3}{l}{\multirow{2}{*}{}}                                                                           & \multicolumn{3}{c}{RNewton}                                                     & \multicolumn{3}{c}{RTRNewton}                                                   \\
\cmidrule(rl){4-6} \cmidrule(rl){7-9} 
\multicolumn{3}{l}{}                                                                                            & no precond.      & proposed precond.     & precond~\cite{Bart10} & no precond      & proposed precond.     & precond.~\cite{Bart10} \\ \hline
\multirow{8}{*}{$M=I$}     & \multirow{2}{*}{\begin{tabular}[c]{@{}l@{}}$n=4000$\\ $p=3$\end{tabular}}   &     iter &$    3.80_{1}$ &$    3.00_{1}$ &$    3.10_{1}$ &$    6.70_{1}$ &$    5.50_{1}$ &$    5.50_{1}$  \\  
                           &                                                                             &      nH &$    8.01_{3}$ &$    5.50_{1}$ &$    5.60_{1}$ &$    7.68_{3}$ &$    1.30_{2}$ &$    1.30_{2}$  \\   \cline{2-9} 
                           & \multirow{2}{*}{\begin{tabular}[c]{@{}l@{}}$n=8000$\\ $p=5$\end{tabular}}   &       iter &$    5.70_{1}$ &$    3.30_{1}$ &$    3.30_{1}$ &$    9.20_{1}$ &$    6.00_{1}$ &$    6.00_{1}$  \\  
                           &                                                                             &         nH &$    2.44_{4}$ &$    5.20_{1}$ &$    5.00_{1}$ &$    2.18_{4}$ &$    1.34_{2}$ &$    1.34_{2}$  \\    \cline{2-9} 
                           & \multirow{2}{*}{\begin{tabular}[c]{@{}l@{}}$n=20000$\\ $p=8$\end{tabular}}  &         iter &$    7.00_{1}$ &$    3.60_{1}$ &$    3.80_{1}$ &$    9.60_{1}$ &$    5.80_{1}$ &$    6.00_{1}$  \\   
                           &                                                                             &         nH &$    2.96_{4}$ &$    5.60_{1}$ &$    5.70_{1}$ &$    2.53_{4}$ &$    1.26_{2}$ &$    1.29_{2}$  \\    \cline{2-9} 
                           & \multirow{2}{*}{\begin{tabular}[c]{@{}l@{}}$n=40000$\\ $p=10$\end{tabular}} &        iter &$    7.90_{1}$ &$        9.00$ &$        9.00$ &$    1.24_{2}$ &$    4.20_{1}$ &$    4.50_{1}$  \\  
                           &                                                                             &          nH &$    2.70_{4}$ &$    1.50_{1}$ &$    1.30_{1}$ &$    2.38_{4}$ &$    8.30_{1}$ &$    9.10_{1}$  \\   \hline
\multirow{8}{*}{$M\not=I$} & \multirow{2}{*}{\begin{tabular}[c]{@{}l@{}}$n=4000$\\ $p=3$\end{tabular}}   &        iter &$    7.20_{1}$ &$    2.90_{1}$ &$    3.90_{1}$ &$    7.70_{1}$ &$    4.90_{1}$ &$    5.40_{1}$  \\  
                           &                                                                             &         nH &$    2.10_{4}$ &$    6.10_{1}$ &$    1.31_{2}$ &$    1.53_{4}$ &$    1.19_{2}$ &$    2.60_{2}$  \\   \cline{2-9} 
                           & \multirow{2}{*}{\begin{tabular}[c]{@{}l@{}}$n=8000$\\ $p=5$\end{tabular}}   &        iter &$    1.07_{2}$ &$    4.00_{1}$ &$    4.40_{1}$ &$    1.22_{2}$ &$    5.90_{1}$ &$    6.40_{1}$  \\   
                           &                                                                             &          nH &$    4.01_{4}$ &$    8.90_{1}$ &$    1.56_{2}$ &$    3.65_{4}$ &$    1.47_{2}$ &$    3.04_{2}$  \\    \cline{2-9} 
                           & \multirow{2}{*}{\begin{tabular}[c]{@{}l@{}}$n=20000$\\ $p=8$\end{tabular}}  &        iter &$    7.10_{1}$ &$    5.00_{1}$ &$    6.80_{1}$ &$    1.17_{2}$ &$    7.70_{1}$ &$    8.40_{1}$  \\   
                           &                                                                             &          nH &$    2.84_{4}$ &$    1.22_{2}$ &$    2.50_{2}$ &$    3.11_{4}$ &$    2.12_{2}$ &$    3.57_{2}$  \\    \cline{2-9} 
                           & \multirow{2}{*}{\begin{tabular}[c]{@{}l@{}}$n=40000$\\ $p=10$\end{tabular}} &        iter &$    7.40_{1}$ &$    2.00_{1}$ &$    6.30_{1}$ &$    1.38_{2}$ &$    5.00_{1}$ &$    6.20_{1}$  \\  
                           &                                                                             &          nH &$    2.86_{4}$ &$    4.40_{1}$ &$    2.27_{2}$ &$    1.66_{4}$ &$    1.25_{2}$ &$    2.41_{2}$  \\  
%\hline
\bottomrule
\end{tabular}
}
\end{minipage}
\label{NumExp-table1}
\end{table}

\paragraph{Quotient Manifold \texorpdfstring{$\mathbb{R}_*^{n\times p}/\mathcal{O}_p$}{} v.s. Embedded Manifold \texorpdfstring{$\mathcal{S}_+(p,n)$}{}.}
Applying the preconditioner proposed in Section~\ref{sec:IncRankAlgPrecond:Precond}, RNewton and RTRNewton have a remarkable advantage for solving Problem~\eqref{Pro_stat-FinalProb} over them for solving Problem~\eqref{Pro_stat-OptProb_Bart}, which can be observed from Table~\ref{NumExp-table1} and Table~\ref{NumExp-table2}, in terms of the number of iterations and the number of the actions of Hessian.
Theoretically, the quotient manifold $\mathbb{R}_*^{n\times p}/\mathcal{O}_p$ is diffeomorphic to the embedded manifold $\mathcal{S}_+(p,n)$, but there exists significant difference lying in optimization. The main difference is the choice of retraction. For $\mathcal{S}_+(p,n)$, a projection-type retraction is used (see~\cite{Bart10}), while an addition-type retraction~\eqref{retraction} is used for $\mathbb{R}_*^{n\times p}/\mathcal{O}_p$. In most cases, the number of inner iterations (i.e., Algorithm~\ref{RieNew-TrunConjGrad} for quotient manifold) is very small, generally 0 or 1. That is, Algorithm~\ref{RieNew-TrunConjGrad} returns the approximate solution to Equation~\eqref{Precond-Metric1-ApproxNewEqu} with $-(\mathrm{grad}f(\pi(Y_k)))_{\uparrow_{Y_k}}$ as the right-hand side. Specifically, the solution $\xi_{\uparrow_{Y_k}}$ satisfies 
\[
	\left( I - \frac{1}{2}P_{Y_k} \right)\nabla^2h(Y_kY_k^T)[Y_k\xi_{\uparrow_{Y_k}}^T+\xi_{\uparrow_{Y_k}}Y_k^T]Y_k(Y_k^TY_k)^{-1} \approx -\left( I - \frac{1}{2}P_{Y_k} \right)\nabla h(Y_kY_k^T)Y_k(Y_k^TY_k)^{-1}.
\]
This is equivalent to 
\[
	(\nabla^2h(Y_kY_k^T)[Y_k\xi_{\uparrow_{Y_k}}^T+\xi_{\uparrow_{Y_k}}Y_k^T]+\nabla h(Y_kY_k^T))Y_k\approx0,
\]
this is, 
\[
	(A(Y_k\xi_{\uparrow_{Y_k}}^T+\xi_{\uparrow_{Y_k}}Y_k^T + Y_kY_k^T)M+M(Y_k\xi_{\uparrow_{Y_k}}^T+\xi_{\uparrow_{Y_k}}Y_k^T+YY^T)A-C)Y_k\approx0,
\]
which implies
\[
	(A(Y_k+\xi_{\uparrow_{Y_k}})(Y_k+\xi_{\uparrow_{Y_k}})^TM+M(Y_k+\xi_{\uparrow_{Y_k}})(Y_k+\xi_{\uparrow_{Y_k}})^TA-C)Y_k\approx(A\xi_{\uparrow_{Y_k}}\xi_{\uparrow_{Y_k}}^TM+M\xi_{\uparrow_{Y_k}}\xi_{\uparrow_{Y_k}}^TA)Y_k.
\]
For a randomly given initial guess $Y_0\in \mathbb{R}_*^{n\times p}$, we observe that $\|A\xi_{\uparrow_{Y_k}}\xi_{\uparrow_{Y_k}}^TM+M\xi_{\uparrow_{Y_k}}\xi_{\uparrow_{Y_k}}^TA\|_F$ is far less than $\|AY_0Y_0^TM+MY_0Y_0^TA-C\|_F$ from Table~\ref{NumExp-table3}. That is to say, $Y+\eta_{\uparrow_Y}$ is a suitable candidate for the solution of~\eqref{Pro_stat-FinalProb} in the column space of $Y_0$. This also partly explains why it is better to define the problem on the quotient manifold $\mathbb{R}_*^{n\times p}/\mathcal{O}_p$ than on the embedded submanifold $\mathcal{S}_+(p,n)$.  

\begin{table}[H]
\centering
\caption{Comparisons of RNewton and RTRNewton on $\mathcal{S}_+(p,n)$ for solving Problem~\eqref{Pro_stat-OptProb_Bart} and on $\mathbb{R}_*^{n\times p}/\mathcal{O}_p$ for solving Problem~\eqref{Pro_stat-FinalProb} with data from Listing~\ref{code2}. The mass matrix is given by $M=\mathrm{diag}([\mathrm{rand}(n-1,1);0]+0.1)$. Algorithms stop if $\|\mathrm{grad}f(x_i)\|/\|\mathrm{grad}f(x_0)\|<10^{-11}$.}
\resizebox{16cm}{!}{
\begin{tabular}{lllllll}
%\hline
\toprule
\multicolumn{3}{l}{\multirow{2}{*}{}}                                                                           & \multicolumn{2}{c}{RNewton}                                                     & \multicolumn{2}{c}{RTRNewton}                                                   \\
\cmidrule(rl){4-5} \cmidrule(rl){6-7} 
\multicolumn{3}{l}{}                                                                                            & proposed precond.     & precond.~\cite{Bart10} &  proposed precond.     & precond.~\cite{Bart10} \\ \hline
\multirow{6}{*}{$\mathcal{S}_+(p,n)$} & \multirow{2}{*}{\begin{tabular}[c]{@{}l@{}}$n=20000$\\ $p=6$\end{tabular}}  &             iter &$    6.90_{1}$ &$    9.30_{1}$ &$    1.21_{2}$ &$    1.41_{2}$   \\      
                           &                                                                             &      nH &$    1.38_{2}$ &$    2.89_{2}$ &$    2.57_{2}$ &$    4.27_{2}$   \\     \cline{2-7} 
& \multirow{2}{*}{\begin{tabular}[c]{@{}l@{}}$n=50000$\\ $p=12$\end{tabular}}   &         iter &$    5.50_{1}$ &$    6.80_{1}$ &$    1.15_{2}$ &$    1.26_{2}$   \\          
                           &                                                                             &          nH &$    1.01_{2}$ &$    1.58_{2}$ &$    2.45_{2}$ &$    3.67_{2}$   \\          \cline{2-7} 
                           & \multirow{2}{*}{\begin{tabular}[c]{@{}l@{}}$n=80000$\\ $p=18$\end{tabular}}  &     iter &$    6.50_{1}$ &$    1.04_{2}$ &$    1.04_{2}$ &$    1.23_{2}$   \\    
                           &                                                                             &       nH &$    8.50_{1}$ &$    2.08_{2}$ &$    2.11_{2}$ &$    3.57_{2}$   \\  \hline
\multirow{6}{*}{$\mathbb{R}_*^{n\times p}/\mathcal{O}_p$} & \multirow{2}{*}{\begin{tabular}[c]{@{}l@{}}$n=20000$\\ $p=6$\end{tabular}}  &          iter &$    2.10_{1}$ &$    3.60_{1}$ &$    5.40_{1}$ &$    6.00_{1}$   \\      
                           &                                                                             &          nH &$    5.70_{1}$ &$    1.21_{2}$ &$    1.23_{2}$ &$    2.76_{2}$   \\      \cline{2-7} 
                           & \multirow{2}{*}{\begin{tabular}[c]{@{}l@{}}$n=50000$\\ $p=12$\end{tabular}}   &       iter &$        4.00$ &$    1.70_{1}$ &$    4.10_{1}$ &$    4.10_{1}$   \\         
                           &                                                                             &          nH &$        6.00$ &$    4.90_{1}$ &$    8.20_{1}$ &$    1.39_{2}$   \\        \cline{2-7} 
                           & \multirow{2}{*}{\begin{tabular}[c]{@{}l@{}}$n=80000$\\ $p=18$\end{tabular}}  &       iter &$        2.00$ &$    1.20_{1}$ &$    3.70_{1}$ &$    3.70_{1}$   \\    
                           &                                                                             &        nH &$        2.00$ &$    4.20_{1}$ &$    7.30_{1}$ &$    1.88_{2}$   \\ 
\bottomrule
\end{tabular}
}
\label{NumExp-table2}
\end{table}

\begin{table}[htp]
\centering
\caption{The data and the parameters are the same as Table~\ref{NumExp-table1}. Here approx\_res = $\|A\xi_{\uparrow_{Y_k}}\xi_{\uparrow_{Y_k}}^TM+M\xi_{\uparrow_{Y_k}}\xi_{\uparrow_{Y_k}}^TA\|_F / \|AY_0Y_0^T+MY_0Y_0^TA-C\|_F$.} 
\resizebox{16cm}{!}{
\begin{tabular}{lllllll}
%\hline
\toprule
\multicolumn{3}{l}{\multirow{2}{*}{}}                                                                           & \multicolumn{2}{c}{RNewton}                                                     & \multicolumn{2}{c}{RTRNewton}                                                   \\
\cmidrule(rl){4-5} \cmidrule(rl){6-7} 
\multicolumn{3}{l}{}                                                                                            & proposed precond.     & precond.~\cite{Bart10} &  proposed precond.     & precond.~\cite{Bart10} \\ \hline
\multirow{3}{*}{$\mathcal{S}_+(p,n)$} & \multirow{1}{*}{\begin{tabular}[c]{@{}l@{}}$(n,p)=(20000,6)$\end{tabular}}  &          approx\_res &$  2.32_{-12}$ &$  4.32_{-14}$ &$  8.26_{-12}$ &$  4.12_{-14}$   \\      \cline{2-7} 
& \multirow{1}{*}{\begin{tabular}[c]{@{}l@{}}$(n,p)=(50000,12)$\end{tabular}}   &  approx\_res &$   3.34_{-7}$ &$  1.13_{-10}$ &$  3.09_{-11}$ &$  8.11_{-11}$   \\          \cline{2-7} 
& \multirow{1}{*}{\begin{tabular}[c]{@{}l@{}}$(n,p)=(80000,18)$\end{tabular}}  &   approx\_res &$   8.07_{-9}$ &$  1.95_{-11}$ &$  8.39_{-11}$ &$  1.30_{-11}$   \\  
\bottomrule
\end{tabular}
}
\label{NumExp-table3}
\end{table}

% }

\subsection{Comparison with Existing Low-Rank Methods} \label{NumExp-CompWithExistingSols}

In this section, Algorithm~\ref{FinalAlgPrecond-RLyap-RNewton}, called IRRLayp, is compared with three existing state-of-the-art low-rank methods for Lyapunov equations, including K-PIK from~\cite{simoncini_new_2007}, RKSM from~\cite{DS11,KS20}, and mess\_lradi from a Matlab toolbox named M-M.E.S.S~\cite{SaaKB21-mmess-2.2}. 
These methods are respectively based on Krylov subspace techniques (K-PIK, RKSM), and alternating direction implicit iterative (mess\_lradi). For mess\_lradi, there are three methods for selecting shifts, ``heuristic'', ``wachspress'', and ``projection'', which are all tested in our experiments. Since, in Algorithm~\ref{FinalAlgPrecond-RLyap-RNewton}, other Riemannian methods can be used to solve the subproblem, we further use the Riemannian trust-region Newton's method in~\cite{Absil2007TrustRegionMO} and the resulting algorithm is denoted by IRRLyap-RTRNewton. Algorithm~\ref{FinalAlgPrecond-RLyap-RNewton} combined with Algorithm~\ref{RieNew-TruncatedNewton} is denoted by IRRLyap-RNewton. As a reference, the notation ``best low rank'' with rank $p$ denotes a best low-rank approximation by the truncated singular value decomposition to the exact solution of~\eqref{Intro-LyapMatEqu}.

\paragraph{Quality of low-rank solutions.}
The generalized Lyapunov equation was drawn from a RAIL benchmark problem with the coefficient matrix of size $n=1357$. For the sake of simplicity, the right-hand side is taken as $C = B(:,1)B(:,1)^T$. Figure \ref{NumExp-QualityofLowRank} shows the experiment results.

\begin{figure}[htbp]
	\centering
	\includegraphics[scale=0.3]{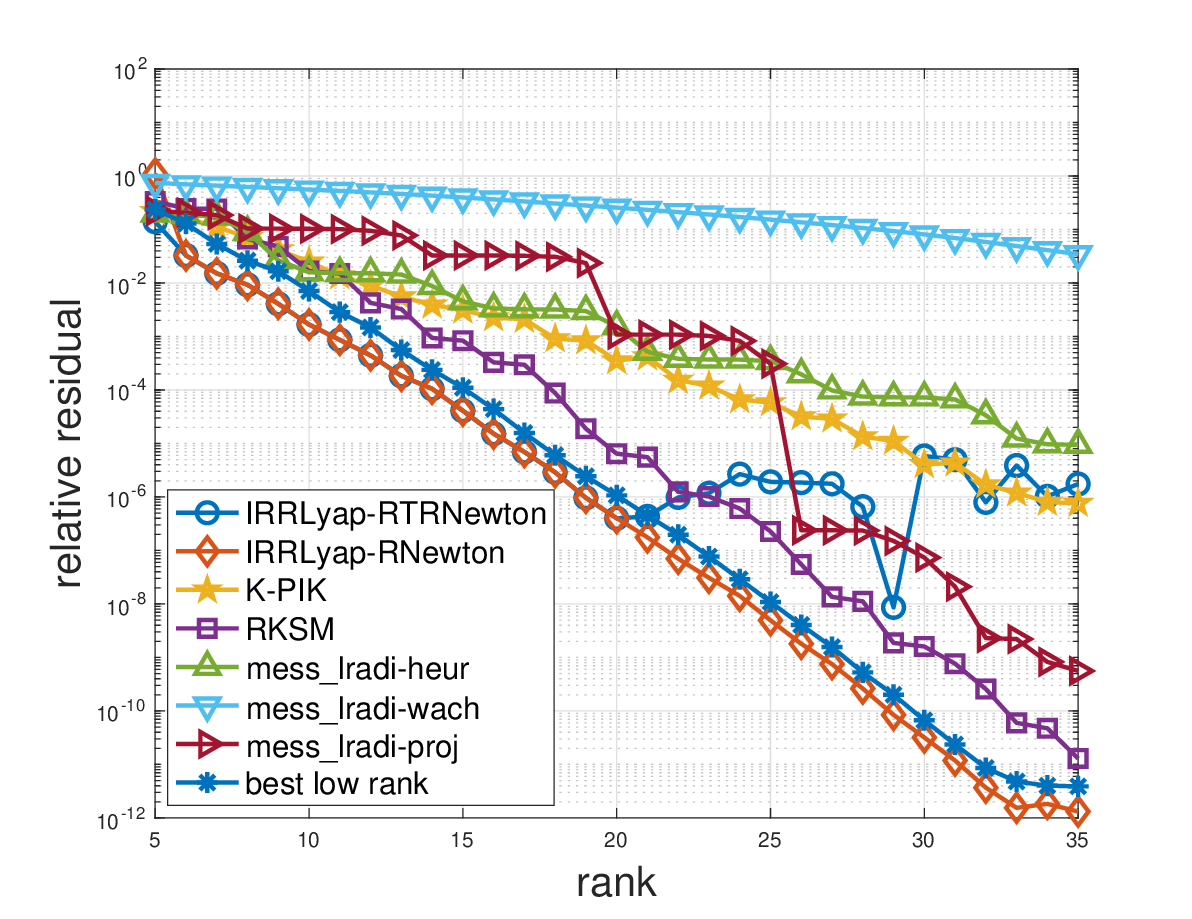}
	\caption{The relative residual for one-rank right-hand-side RAIL benchmark with $n=1357$.} 
	\label{NumExp-QualityofLowRank}
\end{figure}

%\whcomm{[ZHTODO: update the following paragraph. Be specific and precise.]}{}

%Let us compare the performance of the tested methods with the reference method--``best low rank''. 
It can be seen from Figure~\ref{NumExp-QualityofLowRank} that 
under the same rank, the relative residuals of the low-rank approximations from K-PIK and mess\_lradi are notably greater than those from ``best low rank''. Therefore, mess\_lradi and K-PIK are not preferred if lower-rank solutions are desired.
In addition, selecting shifts method by projection performs best for mess\_lradi. 
When the relative residual is greater than $10^{-6}$, that is, highly accurate solutions are not needed, the solutions from IRRLyap-RTRNewton and IRRLyap-RNewton are close in the sense of the relative residual. This is because IRRLyap-RTRNewton and IRRLyap-RNewton solve the same optimization problem. However, when the relative residual is smaller than $10^{-6}$, the relative residuals from IRRLyap-RTRNewton begin to fluctuate without falling. We find that for this problem, RTRNewton is more sensitive to numerical error than RNewton in the sense that RTRNewon sometimes terminates before reaching the stopping criterion. Such behavior of RTRNewton prevents it from finding highly accurate solutions. Therefore, we conclude that IRRLyap-RNewton is preferable compared to IRRLyap-RTRNewton.
The solutions found by IRRLyap-RNewton and RKSM are close to the ones by ``best low rank''.
IRRLyap-RNewton even have smaller relative residuals than those from ``best low rank''. This is not surprising since ``best low rank'' uses truncated SVD and completely ignore the original problem whereas IRRLyap-RNewton aims to find a stationary point, i.e., minimizes the Riemannian gradient---the residual in the horizontal space, see~Proposition~\ref{IngredQuotMani-Prop-Gradient}. Overall, IRRLyap-RNewton is able to find the best solution compared to the tested methods in the sense of the relative residual.

%the performance of IRRLyap(RNewton) is similar to that of ``best low rank'', even slightly better than that of ``best low rank''.

%One of the reasons is that in each iteration IRRLyap(RNewton) solves a fixed rank subproblem which make the relative residual smaller. It is worth noting that in IRRLyap(RNewtom) there is no high requirement for accuracy in the inner subproblem. For instance, $\tau_p=10^{-2}$ is used in this case. 

%We observe, in Figure \ref{NumExp-QualityofLowRank}, that in each step the low-rank approximation from K-PIK are far way from optimal. However, the numerical results of mess\_lradi are between IRRLyap(RNewton) and K-PIK and it seems to be numerically unstable. For lower accuracy, e.g., $\tau \approx 10^{-6}$, the approximations by IRRLyap(RTRNewton) and IRRLyap(RNewton) are very close in a sense of the relative residual. However, as the accuracy is increasing, because of numerical error IRRLyap(RTRNewton) begins to fluctuate, which is one of the reasons why we use Newton's method in Algorithm~\ref{RieNew-TruncatedNewton} for the fixed-rank subproblem(i.e., solving Problem \ref{Pro_stat-FinalProb}). IRRLyap(RNewton) achieves the same accuracy ($\tau \approx 10^{-12}$) as "best low rank". Even RLyap-RNewton can reach lower relative residual than ``best low rank'', in that in each iteration IRRLyap(RNewton) solves a fixed rank subproblem which make the relative residual smaller. It is worth noting that the in IRRLyap(RNewton) there is no high requirements for accuracy in the inner subproblem. For instance, $\tau_p=10^{-2}$ is used in this case. 

\paragraph{Efficiency and performance.}
K-PIK, RKSM, mess\_lradi, IRRLyap-RNewton, and lradi-RNewton (i.e., mess\_lradi provides a coarse solution that is used to be an initial guess of RNewton) are compared by using the RAIL benchmark problems with size $n=5177,20209,79841$. The stopping criterion for all methods are unified as $\|R\|_F=\|AXM+MXA - C\|_F  \le \tau \cdot \|C\|_F$ with a tolerance $\tau = 10^{-6}$. 
The results are reported in Table~\ref{NumExp-Table4}.

\begin{table}[htbp]
\caption{Comparison for the simplified RAIL benchmark with existing methods. ``rank'', ``time'', ``rel\_res'', ``numSys'', and ``timeSys'' respectively denote the rank of the approximation, running time, the relative residual of the approximation, the number of solving shift systems $(A+\lambda M)X=b$ for $X$ with given $A,\lambda, M$ and $b$ (column vector or matrix, the numbers in parentheses indicate the number of matrices), and the CPU time for solving all those shift systems. Besides, the number $a.bc_{-k}$ denotes $a.bc\times 10^{-k}$.}
\centering
\setlength{\tabcolsep}{3.5pt}
\resizebox{16.5cm}{!}{
\begin{tabular}{llllllllllllllll}
\toprule
\multirow{2}{*}{} & rannk & time(s.)     & rel\_res     & numSys & timeSys  & rannk & time(s.)     & rel\_res     & numSys & timeSys  & rannk & time(s.)     & rel\_res     & numSys & timeSys  \\
\cmidrule(rl){2-6} \cmidrule(rl){7-11} \cmidrule(rl){12-16}
                  & \multicolumn{5}{c}{5177}                                        & \multicolumn{5}{c}{20209}                                       & \multicolumn{5}{c}{79841}                                       \\
\midrule                  
K-PIK             & 126   & $      4.61$ & $ 1.46_{-6}$ & 64         & $      1.44$ & 182   & $  4.78_{1}$ & $ 2.65_{-6}$ & 92         & $      7.47$ & 248   & $  5.34_{2}$ & $ 4.39_{-6}$ & 125        & $  6.36_{1}$ \\
RKSM              & 27    & $      1.94$ & $ 7.07_{-7}$ & 28         & $ 1.67_{-1}$ & 38    & $  2.66_{1}$ & $ 2.65_{-6}$ & 39         & $      1.31$ & 41    & $  2.70_{2}$ & $ 4.39_{-6}$ & 42         & $      7.79$ \\
mess\_lradi       & 32    & $ 3.58_{-1}$ & $ 1.47_{-7}$ & 64         & $          $ & 37    & $      1.12$ & $ 5.90_{-7}$ & 74         & $          $ & 38    & $      4.14$ & $ 6.12_{-8}$ & 76         & $          $ \\
lradi-RNewton     & 26    & $      2.37$ & $ 6.31_{-7}$ & 8268(156)       & $      1.29$ & 32    & $      7.56$ & $ 2.28_{-7}$ & 6240(96)       & $      4.61$ & 32    & $  7.26_{1}$ & $ 4.13_{-6}$ & 14560(224)      & $  4.67_{1}$ \\
IRRlyap-RNewton     & 22    & $      5.78$ & $ 7.12_{-7}$ & 16095(521)      & $      3.51$ & 27    & $  4.44_{1}$ & $ 3.29_{-7}$ & 28410(822)      & $  3.04_{1}$ & 27    & $  2.71_{2}$ & $ 5.21_{-7}$ & 36950(1038)      & $  1.79_{2}$ \\
\bottomrule
\end{tabular}
}
\label{NumExp-Table4}
\end{table}

% {\color{red}
For the three Lyapunov equations, K-PIK fails to satisfy the stopping criterion, implying that K-PIK has difficulty finding highly accurate solutions. Moreover, K-PIK needs to use a higher rank to reach a similar residual compared to other methods. Therefore, K-PIK is not preferred. 
The method mess\_lradi is the most efficient algorithm in the sense of computational time. However, it also requires a higher rank for similar accuracy when compared to lradi\_RNewton and IRRLyap-RNewton. Thus, mess\_lradi is preferred if one has strict requirements on efficiency but not on rank. In view of this, noting the fact that mess\_lradi can rapid provides a coarse solution and the fact that RNewton can refine the coarse solution,  we integrate mess\_lradi with RNewton. The resulting method, lradi\_RNewton, requires a relative lower rank for similar accuracy while reduces significantly computational time. Additionally, the method RKSM also requires a higher rank for similar accuracy compared to IRRLya-RNewton, and for two medium to large-scale problems, it seems to be difficult to find highly accurate solutions. However, the proposed method IRRLyap-RNewton gives the solutions with lowest rank compared to the rest methods. We can observe from Table~\ref{NumExp-Table4} that most of the time in lradi\_RNewton and IRRLyap-RNewton are taken to solve the shift systems. Therefore, for some problems, if the resulting shift systems can be solved more efficiently, then the efficiency of lradi\_RNewton and IRRLyap-RNewton can be further improved. Overall, we suggest using IRRLyap-RNewton if one does not have strict restrictions on computational time and desires as low-rank solutions as possible.

\section{Conclusions} \label{Concl}

In this paper, we have generalized the truncated Newton's method from Euclidean spaces to Riemannian manifolds, called Riemannian truncated-Newton's method, and shown the convergence results, e.g., global convergence and local superlinear convergence. Moreover, the cost function from~\cite{Bart10} is reformulated as an optimization problem defined on the Riemannian quotient manifold $\mathbb{R}^{n\times p}/\mathcal{O}_p$. An algorithm, called IRRLyap-RNewton, is proposed and is used to find a low-rank approximation of the generalized Lyapunov equations. We develop a new preconditioner that take $M \neq I$ into consideration. The numerical results show that the proposed RNewton has superior advangate over RTRNewton, RCG, and LRBGFS for solving Problem~\eqref{Pro_stat-FinalProb}, and that
 the new preconditioner significantly reduce the number of actions of Riemannian Hessian evaluations even when $M \neq I$. In addition, IRRLyap-RNewton is able to find a similar accurate solution with the lowest rank compared to some state-of-the-art methods, including K-PIK, and mess\_lradi. 
%On the other hand, even though RLyap-RNewton performs very well with preconditioning, there is still the possibility of further speedup. For example, efficiently solving the shifted systems will significantly reduce the running time.

\normalem
\bibliographystyle{alpha}
\bibliography{references}

\appendix
\section{Appendix} \label{Appendix}

\subsection{The Other Two Metrics and Corresponding Preconditioners.}

\paragraph{Riemannian metrics on $\mathbb{R}_*^{n\times p}$.} Two other Riemannian metrics on $\mathbb{R}_*^{n\times p}$ given in~\cite{Zheng2022RiemannianOU} also are considered here: 
\begin{equation} \label{Rmetrics}
g^{i}_Y(\eta_Y,\xi_Y) = 
\begin{cases}
	\mathrm{tr}(Y^TY\eta_Y^T\xi_Y) & i = 2,  \\
	\mathrm{tr}(\eta_Y^T\xi_Y)  & i = 3,  
\end{cases}
\end{equation}
the Riemannian metric $g_Y^2$ has been used in \cite{HUANG2017}, where the space under consideration is on the complex field; and the Riemannian metric $g_Y^3$ is the standard Euclidean inner product on $\mathbb{R}^{n\times p}$ and has been considered in~\cite{MA20}.

\paragraph{Horizontal spaces.} 
Given a Riemannian metric $g$ on the total space $\mathbb{R}_*^{n \times p}$, the orthogonal complement space in $\mathrm{T}_Y\mathbb{R}_*^{n\times p}$ of $\mathrm{V}_Y$ with respect to $g_Y$ is called the horizontal space at $Y$, denoted by $\mathrm{H}_Y$. The horizontal spaces with respect to the three Riemannian metrics in~\eqref{Rmetrics} are respectively given by
\begin{equation} \nonumber
	\mathrm{H}_Y^i = \begin{cases}		 
		\{YS+Y_\perp K:S^T=S,S\in\mathrm{R}^{p\times p},K\in\mathbb{R}^{(n-p)\times p}\} & i=2, \\
		\{Y(Y^TY)^{-1}S+Y_\perp K:S^T=S,S\in\mathbb{R}^{p\times p},K\in\mathbb{R}^{(n-p)\times p}\} & i=3.
	\end{cases}
\end{equation}

\paragraph{Projections onto Vertical Spaces and Horizontal Spaces.}
For any $Y\in \mathbb{R}_*^{n\times p}$ and $\eta_Y\in \mathrm{T}_Y\mathbb{R}_*^{n\times p}$, the orthogonal projections of $\eta_Y$ to $\mathrm{V}_Y$ and $\mathrm{H}_Y^i$ with respect to the second metric are respectively given by
	$$
	\begin{aligned} 
	\mathcal{P}_Y^{\mathrm{V}^2}(\eta_Y) = Y\Omega, \text{ and } \mathcal{P}_Y^{\mathrm{H}^2}(\eta_Y)&=\eta_Y-\mathcal{P}_Y^{\mathrm{V}^{2}}(\eta_Y)=\eta_Y-Y\Omega \\ &= Y\left( \frac{(Y^TY)^{-1}Y^T\eta_Y+\eta_Y^TY(Y^TY)^{-1}}{2} \right) + Y_\perp Y_\perp^T\eta_Y, 
    \end{aligned}
	$$
	where $\Omega = \frac{(Y^TY)^{-1}Y^T\eta_Y-\eta_Y^TY(Y^TY)^{-1}}{2}$. The orthogonal projections to vertical and horizontal spaces with respect to the third metric are 
    $$ 
    \mathcal{P}_Y^{\mathrm{V}^3}(\eta_Y)=Y\Omega, \text{ and } \mathcal{P}_Y^{\mathrm{H}^3}(\eta_Y)=\eta_Y-\mathcal{P}_Y^{\mathrm{V}^3}(\eta_Y)=\eta_Y-Y\Omega,
    $$
    where $\Omega$ is the skew-symmetric matrix satisfying $ \Omega Y^TY +Y^TY\Omega=Y^T\eta_Y-\eta^T_YY.$
  
\paragraph{Riemannian metrics on $\mathbb{R}_*^{n\times p}/\mathcal{O}_p$.}
Since the two Riemannian metrics $g_Y^2,g_Y^3$ on $\mathbb{R}_*^{n\times p}$ satisfy~\eqref{IngredQuotMani-Metric-induced} by~\cite{Zheng2022RiemannianOU}, the corresponding three metrics on $\mathbb{R}_*^{n\times p}/\mathcal{O}_p$ are given by:  
\begin{subnumcases} { g^i_{\pi(Y)}(\xi_{\pi(Y)},\eta_{\pi(Y)})=}
\mathrm{tr}(Y^TY\xi_{\uparrow_Y}^T\eta_{\uparrow_Y}) & $i = 2,$ \label{IngredQuotMani-Metric2} \\
\mathrm{tr}(\xi_{\uparrow_Y}^T\eta_{\uparrow_Y})  & $i =3,$ \label{IngredQuotMani-Metric3}
\end{subnumcases}
for all $\xi_{\pi(Y)},\eta_{\pi(Y)}\in\mathrm{T}_{\pi(Y)}\mathbb{R}_*^{n\times p}/\mathcal{O}_p$.
In the following, with a slight abuse of notation, we use $g^i$, $i=2,3$, to denote the Riemannian metrics on both $\mathbb{R}_*^{n\times p}$ and $\mathbb{R}_*^{n\times p}/\mathcal{O}_p$.

\paragraph{The horizontal lifts of Riemannian Gradients and the actions of Riemannian Hessians.} 
It follows from~\cite{Zheng2022RiemannianOU} that the Riemannian gradients and Riemannian Hessian of $f$ in~\eqref{Pro_stat-FinalProb} can be characterized by the Euclidean gradient and the Euclidean Hessian of $h$ in~\eqref{Pro_stat-OptProb_Bart}, see Proposition~\ref{IngredQuotMani-Prop-Gradient}.
\begin{proposition} \label{IngredQuotMani-Prop-Grad-Hess23}
  The Riemannian gradients of the smooth real-valued function $f$ in \eqref{Pro_stat-FinalProb} on $\mathbb{R}_*^{n\times p}/\mathcal{O}_p$ with respect to the two other Riemannian metrics are respectively given by 
  \[
  (\mathrm{grad}f(\pi(Y)))_{\uparrow_Y} = \mathrm{grad}\bar{f}(Y) =
  \begin{cases}
    2\nabla h(YY^T)Y(Y^TY)^{-1} & \text{ under metric } g^{2}, \\ 
    2\nabla h(YY^T)Y  & \text{ under metric } g^{3},
  \end{cases}
  \]
and the actions of the Riemannian Hessians are respectively given by
\begin{subnumcases} {\label{IngredQuotMani-Hessian} (\mathrm{Hess}f(\pi(Y))[\eta_{\pi(Y)}])_{\uparrow_Y}=}
2\nabla^2h(YY^T)[Y\eta_{\uparrow_Y}^T+\eta_{\uparrow_Y}Y^T]Y(Y^TY)^{-1}  + T_2 & $i = 2,$ \label{IngredQuotMani-Hessian2} \\
2\nabla^2h(YY^T)[Y\eta_{\uparrow_Y}^T+\eta_{\uparrow_Y}Y^T]Y  + T_3  & $i =3,$ \label{IngredQuotMani-Hessian3}
\end{subnumcases}
where $\nabla^2h(YY^T)[V] = AVM + MVA$, $P_Y=Y(Y^TY)^{-1}Y^T$, $ T_2 = \mathcal{P}_Y^{\mathrm{H}^2}\{ 
            \nabla h(YY^T)P_Y^\perp \eta_{\uparrow_Y}(Y^TY)^{-1}+P_Y^\perp \nabla h(YY^T)\eta_{\uparrow_Y}(Y^TY)^{-1} + 2\mathrm{skew}(\eta_{\uparrow_Y}Y^T)\nabla h(YY^T)Y(Y^TY)^{-2}  + 2\mathrm{skew}\{ \eta_{\uparrow_Y}(Y^TY)^{-1}Y^T\nabla h(YY^T) \}Y(Y^TY)^{-1}\}$, and $T_3 = 2\mathcal{P}_Y^{\mathrm{H}^3}\{ \nabla h(YY^T)\eta_{\uparrow_Y} \}$. 
\end{proposition}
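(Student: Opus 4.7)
The plan is to obtain the gradient and Hessian on the total space $\mathbb{R}_*^{n\times p}$ with respect to each metric, verify horizontality, and then use the Riemannian submersion machinery (since $g^2$ and $g^3$ satisfy the invariance condition~\eqref{IngredQuotMani-Metric-induced}) to read off the horizontal lifts on the quotient. First, I would compute the Euclidean gradient of the lifted function $\bar{f}(Y)=h(YY^T)$, which by the chain rule equals $\nabla\bar{f}(Y)=2\nabla h(YY^T)\,Y$, where $\nabla h(YY^T)=AYY^TM+MYY^TA-C$. For each metric $g^i$ the Riemannian gradient on the total space is determined by $g^i_Y(\mathrm{grad}^{i}\bar{f}(Y),\xi)=\mathrm{tr}((\nabla\bar{f}(Y))^{T}\xi)$ for all $\xi\in\mathrm{T}_Y\mathbb{R}_*^{n\times p}$. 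Solving this identity for $g^2_Y(\eta,\xi)=\mathrm{tr}(Y^TY\,\eta^T\xi)$ yields $\mathrm{grad}^{2}\bar{f}(Y)=2\nabla h(YY^T)Y(Y^TY)^{-1}$, and for $g^3$ (the standard inner product) gives $\mathrm{grad}^{3}\bar{f}(Y)=2\nabla h(YY^T)Y$.

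Next I would verify that each $\mathrm{grad}^{i}\bar{f}(Y)$ lies in the horizontal space $\mathrm{H}_Y^i$. Because $\bar{f}$ is constant on orbits, its Riemannian gradient is automatically $g^i$-orthogonal to $\mathrm{V}_Y$, so horizontality is free; alternatively, a direct check shows that $\nabla h(YY^T)Y(Y^TY)^{-1}$ has the form $YS+Y_\perp K$ with $S=S^T$, placing it in $\mathrm{H}_Y^2$ (and similarly after the coordinate change $Y\mapsto Y(Y^TY)^{-1}S$ for $\mathrm{H}_Y^3$). Since $\pi$ is a Riemannian submersion, this identifies the horizontal lift of $\mathrm{grad}f(\pi(Y))$ as claimed.

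For the Hessians I would apply the standard Riemannian-submersion formula
\[
\bigl(\mathrm{Hess}f(\pi(Y))[\eta_{\pi(Y)}]\bigr)_{\uparrow_Y} = \mathcal{P}_Y^{\mathrm{H}^i}\!\left(\nabla^{g^i}_{\eta_{\uparrow_Y}}\mathrm{grad}^{i}\bar{f}(Y)\right),
\]
where $\nabla^{g^i}$ is the Levi-Civita connection on the total space associated with $g^i$. For $g^3$ the connection reduces to the ordinary directional derivative of $Y\mapsto 2\nabla h(YY^T)Y$: using $\nabla^2 h(YY^T)[V]=AVM+MVA$ and the product rule, this derivative is $2\nabla^2 h(YY^T)[Y\eta_{\uparrow_Y}^T+\eta_{\uparrow_Y}Y^T]Y+2\nabla h(YY^T)\eta_{\uparrow_Y}$; projecting onto $\mathrm{H}_Y^3$ produces the main term plus $T_3=2\mathcal{P}_Y^{\mathrm{H}^3}\{\nabla h(YY^T)\eta_{\uparrow_Y}\}$. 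For $g^2$ the connection carries extra Christoffel contributions coming from the factor $Y^TY$ in the metric, which produce the four correction pieces inside $T_2$: an $\eta_{\uparrow_Y}(Y^TY)^{-1}$ acting from the right on $\nabla h(YY^T)$ split into horizontal/vertical parts, and the two skew-symmetric terms accounting for the derivative of $(Y^TY)^{-1}$ in the direction $\eta_{\uparrow_Y}$.

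The main obstacle will be the $g^2$-Hessian: computing the Levi-Civita connection from the Koszul formula for $g^2_Y(\eta,\xi)=\mathrm{tr}(Y^TY\,\eta^T\xi)$ is error-prone, and grouping the resulting directional-derivative terms into the four symmetrized/skew-symmetrized pieces that make up $T_2$ requires careful bookkeeping so that the final expression is manifestly horizontal after applying $\mathcal{P}_Y^{\mathrm{H}^2}$. To keep the calculation tractable, I would compute $\nabla^{g^2}_{\eta_{\uparrow_Y}}\mathrm{grad}^{2}\bar{f}(Y)$ as the Euclidean directional derivative of $2\nabla h(YY^T)Y(Y^TY)^{-1}$ corrected by the Christoffel term derived from $g^2$, then decompose each summand along $Y$, $Y_\perp$ and symmetric/skew components, and finally absorb the vertical parts via $\mathcal{P}_Y^{\mathrm{H}^2}$. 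All of these computations match those carried out in~\cite{Zheng2022RiemannianOU}, which I would cite for the connection formula and reuse directly rather than rederive, leaving only the substitution $\nabla h(YY^T)=AYY^TM+MYY^TA-C$ and $\nabla^2 h(YY^T)[V]=AVM+MVA$ to complete the proof.
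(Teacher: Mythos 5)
Your proposal is correct and follows essentially the same route the paper relies on: the paper gives no derivation of Proposition~\ref{IngredQuotMani-Prop-Grad-Hess23} beyond deferring to~\cite{Zheng2022RiemannianOU}, and your plan (metric duality for the total-space gradient, automatic horizontality of the gradient of the invariant lift, and the submersion formula $(\mathrm{Hess}f(\pi(Y))[\eta_{\pi(Y)}])_{\uparrow_Y}=\mathcal{P}_Y^{\mathrm{H}^i}(\nabla^{g^i}_{\eta_{\uparrow_Y}}\mathrm{grad}^i\bar f(Y))$, with the $g^2$ Levi-Civita connection taken from that reference) is exactly the derivation being cited. Your reading of the $g^3$ case is consistent with the stated formula since the leading term $2\nabla^2h(YY^T)[Y\eta_{\uparrow_Y}^T+\eta_{\uparrow_Y}Y^T]Y$ is already horizontal, so the projection only produces $T_3$; the analogous observation justifies why only $T_2$ carries $\mathcal{P}_Y^{\mathrm{H}^2}$ in the $g^2$ case.
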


\subsection{Preconditioning under Riemannian Metrics~\eqref{IngredQuotMani-Metric2} and~\eqref{IngredQuotMani-Metric3}}

Similar to the approach in Section~\ref{sec:IncRankAlgPrecond:Precond}, the preconditioners for Riemannian metrics~\eqref{IngredQuotMani-Metric2} and~\eqref{IngredQuotMani-Metric3} respectively solves the equations
\begin{equation} \label{e06}
[A(Y\xi_{\uparrow_Y}^T+\xi_{\uparrow_Y}Y^T)M+M(Y\xi_{\uparrow_Y}^T+\xi_{\uparrow_Y}Y^T)A]Y=\eta_{\uparrow_Y}(Y^TY), \hbox{ for $\xi_{\uparrow_Y} \in \mathrm{H}_Y^2$ }
\end{equation}
and
\begin{equation} \label{e07}
  2[A(Y\xi_{\uparrow_Y}^T+\xi_{\uparrow_Y}Y^T)M+M(Y\xi_{\uparrow_Y}^T+\xi_{\uparrow_Y}Y^T)A]Y=\eta_{\uparrow_Y}, \hbox{ for $\xi_{\uparrow_Y} \in \mathrm{H}_Y^3$}.
\end{equation}
The derivations are analogous to those in Section~\ref{sec:IncRankAlgPrecond:Precond} and therefore are not repeated here. The algorithms for solving~\eqref{e06} and~\eqref{e06} are respectively stated in Algorithm~\ref{Precond-Metric2-Alg} and Algorithm~\ref{Precond-Metric3-Alg}. Note that in Step~\ref{Precond-Metric3-Sum-Sxi} and~\ref{Precond-Metric3-for-Omega} of Algorithm~\ref{Precond-Metric3-Alg}, two small-scale Sylvester equations of size $p\times p$ need be solved and can be done by using \textit{lyap} function in MATLAB. 

\begin{algorithm}[htbp]
  \caption{Preconditioner under Riemannian metric~\eqref{IngredQuotMani-Metric2}} 
  \begin{algorithmic}[1] \label{Precond-Metric2-Alg}
  \REQUIRE Matrices $A$ and $M$ and horizontal vector $\eta_{\uparrow_Y}\in\mathrm{H}_Y^2$;
  \ENSURE $\xi_{\uparrow_Y}$ satisfying~\eqref{e06};
  \STATE Set $LL^T\gets Y^TMY$ (Cholesky factorization);
  \STATE Set $Q\Lambda Q^T\gets L^{-1}Y^TAYL^{-T}$ (Eigenvalues decomposition); 
  \STATE Set $\hat{V}\gets \mathrm{orthonormal}(MY)$;
  \STATE Set $v_i\gets Q^TL^{-1}Y^TA\mathcal{T}_i^{-1}\left( (I-\hat{V}\hat{V}^T)\frac{1}{2}\eta_{\uparrow_Y}(Y^TY)L^{-T}Q(:,i)\right)$;
  \STATE Set $K_i\gets 2\lambda_iI-Q^TL^{-1}Y^TA\mathcal{T}_i^{-1}\left( (I-\hat{V}\hat{V}^T)2AYL^{-T}Q\right)$;
  \STATE Set $R\gets \frac{1}{2}Q^TL^{-1}Y^T\eta_{\uparrow_Y}(Y^TY)L^{-T}Q-[v_1,\cdots,v_k]-[v_1,\cdots,v_k]^T$;
  \STATE Solve for $\tilde{S}_\xi$ by $(\mathcal{K}+\Pi\mathcal{K}\Pi)\mathrm{vec}(\tilde{S}_\xi)=\mathrm{vec}(R)$;
  \STATE Solve for $\tilde{Z}_\xi$ by   
    \begin{small}
    \begin{align}  \nonumber
  	 \tilde{Z}_\xi (:, i)\gets \mathcal{T}_i^{-1}\left((I-\hat{V}\hat{V}^T)\frac{1}{2}\eta_{\uparrow_Y}(Y^TY)L^{-1}Q(:,i)\right) - \mathcal{T}_i^{-1}\left((I-\hat{V}\hat{V}^T)2AYL^{-T}Q\right)\tilde{S}_\xi(:,i);
    \end{align}
  	\end{small}  
  \STATE Set $Z_{\xi,1}\gets \frac{1}{2}Y\left( (Y^TY)^{-1}Y^T\tilde{Z}_\xi Q^TL^{-1} + L^{-T}Q\tilde{Z}_{\xi}^TY(Y^TY)^{-1}  \right)$;
  \STATE Set $Z_{\xi,2}\gets (I - Y(Y^TY)^{-1}Y^T\tilde{Z}_{\xi}Q^TL^{-1}$;
  \STATE Set $\xi_{\uparrow_Y}\gets YL^{-T}Q\tilde{S}_\xi Q^TL^{-1} + Z_{\xi,1} + Z_{\xi,2}$.
  
 \end{algorithmic}
\end{algorithm}

\begin{algorithm}[htbp]
  \caption{Preconditioner under Riemannian metric~\eqref{IngredQuotMani-Metric3}} 
  \begin{algorithmic}[1] \label{Precond-Metric3-Alg}
  \REQUIRE Matrices $A$ and $M$ and horizontal vector $\eta_{\uparrow_Y}\in\mathrm{H}_Y^3$;
  \ENSURE $\xi_{\uparrow_Y}$ satisfying~\eqref{e07};
  \STATE Set $LL^T\gets Y^TMY$ (Cholesky factorization);
  \STATE Set $Q\Lambda Q^T \gets L^{-1}Y^TAYL^{-T}$ (Eigenvalues decomposition); 
  \STATE Set $\hat{V}\gets \mathrm{orthonormal}(MY)$;
  \STATE Set $v_i\gets Q^TL^{-1}Y^TA\mathcal{T}_i^{-1}\left( (I-\hat{V}\hat{V}^T)\frac{1}{2}\eta_{\uparrow_Y}L^{-T}Q(:,i)\right)$;
  \STATE Set $K_i\gets 2\lambda_iI-Q^TL^{-1}Y^TA\mathcal{T}_i^{-1}\left( (I-\hat{V}\hat{V}^T)2AYL^{-T}Q\right)$;
  \STATE Set $R\gets \frac{1}{2}Q^TL^{-1}Y^T\eta_{\uparrow_Y}L^{-T}Q-[v_1,\cdots,v_k]-[v_1,\cdots,v_k]^T$;
  \STATE Solve for $\tilde{S}_\xi$ by $(\mathcal{K}+\Pi\mathcal{K}\Pi)\mathrm{vec}(\tilde{S}_\xi)=\mathrm{vec}(R)$;
  \STATE Solve for $\tilde{Z}_\xi$ by 
  \begin{small}  	  
  \begin{align*} \nonumber
	  \tilde{Z}_\xi (:, i)\gets \mathcal{T}_i^{-1}\left((I-\hat{V}\hat{V}^T)\frac{1}{2}\eta_{\uparrow_Y}L^{-1}Q(:,i)\right) - \mathcal{T}_i^{-1}\left((I-\hat{V}\hat{V}^T)2AYL^{-T}Q\right)\tilde{S}_\xi(:,i);
  \end{align*}
  \end{small}
  \STATE Solve for $S_\xi$ by $S_\xi(Y^TY)^{-1}+(Y^TY)^{-1}S_\xi=2L^{-T}Q\tilde{S}_\xi Q^TL^{-1}$; \label{Precond-Metric3-Sum-Sxi}
  \STATE Solve for $\Omega$ by $\Omega Y^TY + Y^TY \Omega = Y^T\tilde{Z}_{\xi}Q^TL^{-1} - L^{-T}Q\tilde{Z}_{\xi}^TY$; \label{Precond-Metric3-for-Omega}
  \STATE Set $\xi_{\uparrow_Y}\gets Y(Y^TY)^{-1}S_\xi+\tilde{Z}_\xi Q^{T}L^{-1} - Y\Omega$. 
  
 \end{algorithmic}
\end{algorithm}

%It is notable that in Step \ref{Precond-Metric3-Sum-Sxi} and \ref{Precond-Metric3-for-Omega}, we need solve two small-scale Sylvester equations of size $p\times p$ and it is done by using $lyap$ function in MATLAB. 

%\whcomm{==============================}{}

\end{document}